\numberwithin{equation}{section}
\newtheorem{lemma}{Lemma}[section]
\newtheorem{propn}[lemma]{Proposition}
\newtheorem{thm}[lemma]{Theorem}
\newtheorem{cor}[lemma]{Corollary}
\newtheorem{defn}[lemma]{Definition}
\newtheorem{remark0}[lemma]{Remark}
\DeclareMathOperator{\CR}{CR}
\renewenvironment{proof}{{\em Proof.}}{\hspace*{\fill} $\square$}
\newenvironment{proofof}[1]{{\em Proof of #1.}}{\hspace*{\fill} $\square$}
 \newcommand{\R}{\mathbb{R}}
 \newcommand{\C}{\mathbb{C}}
 \newcommand{\N}{\mathbb{N}}
 \newcommand{\I}{\mathbbm{1}}
 \newcommand{\E}[1]{\mathbb{E}\left [ #1 \right ]}
 \newcommand{\Ea}[1]{\mathbb{E}^{A_n}\left [ #1 \right ]}
 \newcommand{\HH}{\mathbb{H}}
 \newcommand{\Os}{\mathcal{O}}
 \newcommand{\F}{\mathcal F}
 \newcommand{\e}{\operatorname{e}}
 \newcommand{\D}{\mathbb D }
 \renewcommand{\I}[1]{\mathbf 1_{\{#1\}}}
  \renewcommand{\1}{\mathbf 1}
  \renewcommand{\P}{\mathbb P}
 \newcommand{\dne}{\tilde{D}_n^\eta}
 \newcommand{\ind}[2]{\1_{E_\eta(#1,#2)}}
 \newcommand{\qex}{\mathbb{Q}_{\eta,z}^*}
 \newcommand{\qa}[1]{\mathbb{Q}_{\eta,z}^*\left[#1\right]}
 \newcommand{\qb}[1]{\mathbb{Q}_\eta^*\left[#1\right]}
 \newcommand{\eps}{\varepsilon}
 \newcommand{\lzn}{\log\CR^{-1}(z,\D\setminus A_n)}
 \newcommand{\lzo}{\log\CR^{-1}(z,\D)}
 \newcommand{\Ev}{\mathtt{E}}
\begin{document}

\title{Liouville measure as a multiplicative cascade via level sets of the Gaussian free field} 
\author{Juhan Aru, Ellen Powell, Avelio Sep\'{u}lveda}
\date{}
\begin{abstract}
We provide new constructions of the subcritical and critical Gaussian multiplicative chaos (GMC) measures corresponding to the 2D Gaussian free field (GFF). As a special case we recover E. Aidekon's construction of random measures using nested conformally invariant loop ensembles, and thereby prove his conjecture that certain CLE$_4$ based limiting measures are equal in law to the GMC measures for the GFF. The constructions are based on the theory of local sets of the GFF and build a strong link between multiplicative cascades and GMC measures. This link allows us to directly adapt techniques used for multiplicative cascades to the study of GMC measures of the GFF. As a proof of principle we do this for the so-called Seneta--Heyde rescaling of the critical GMC measure. 
\end{abstract}
\maketitle

\section{Introduction} 

Gaussian multiplicative chaos (GMC) theory, initiated by Kahane in the 80s \cite{KAH} as a generalization of multiplicative cascades, aims to give a meaning to ``$\exp(\Gamma)$'' for rough Gaussian fields $\Gamma$. In a simpler setting it was already used in the 70s to model the exponential interaction of bosonic fields \cite{HK}, and over the past ten years it has gained importance as a key component in constructing probabilistic models of so-called Liouville quantum gravity in 2D \cite{DS, DKRV} (see also \cite{Nakayama} for a review from the perspective of theoretical physics).

One of the important cases of GMC theory is when the underlying Gaussian field is equal to $\gamma\Gamma$, for $\Gamma$ a 2D Gaussian free field (GFF) \cite{DS} and $\gamma > 0$ a parameter. It is then possible to define random measures with area element ``$\exp(\gamma \Gamma)dx\wedge dy$''. These measures are sometimes also called Liouville measures \cite{DS}, and we will do so for convenience in this article. \footnote{In the physics literature, the term ``Liouville measure" refers to a volume form coming from a conformal field theory with a non-zero interaction term (see \cite[Section 3.6]{RVnotes} for an explanation). This induces a certain weight on the underlying GFF measure. Therefore, the {G}aussian multiplicative chaos measures that we consider in this article are not precisely the Liouville measures from the physics literature. Our measures correspond in some sense only to a degenerate case, where the interaction parameter is set to $0$.} Due to the recent work of many authors \cite{RV,DS, Ber, Shamov} one can say that we have a rather thorough understanding of Liouville measures in the so-called subcritical regime ($\gamma < 2$). The critical regime ($\gamma = 2$) is trickier, but several constructions are also known \cite{DSRV, DSRV2, JS,EP}. 

Usually, in order to construct the GMC measure, one first approximates the underlying field using either a truncated series expansion or smooth mollifiers, then takes the exponential of the approximated Gaussian field, renormalizes it and shows that the limit exists in the space of measures. In a beautiful paper \cite{Ai} the author proposed a different way to construct measures of multiplicative nature using nested conformally invariant loop ensembles, inspired by multiplicative cascades. He conjectured that in the subcritical and critical regime, and in the case where these loop ensembles correspond to certain same-height contour lines of the underlying GFF, the limiting measure should have the law of the Liouville measure. In this paper we confirm his conjecture. This is done by providing new constructions of the subcritical and critical Liouville measures using a certain family of so called local sets of the GFF \cite{SchSh2, ASW} and reinterpreting his construction as a special case of this general setting. Some of our local-set based constructions correspond to simple multiplicative cascades, and others in some sense to stopping lines constructions of the multiplicative cascade measures \cite{K}. Moreover, although the underlying field is Gaussian, our approximations are ``non-Gaussian'' but yet both local and conformally invariant. Note that for the 1D chaos measures there are recent non-Gaussian constructions stemming from random matrix theory, see e.g. \cite{Webb}, but they are very different in nature. We also remark that our construction strongly uses the Markov property of the GFF and hence does not easily generalize to other log-correlated fields.

One simple, but important, consequence of our results is the simultaneous construction of a GFF in a simply connected domain and its associated Liouville measure using nested CLE$_4$ and a collection of independent coin tosses. Start with a height function $h_0 = 0$ on $\D$ and sample a CLE$_4$ in $\D$. Inside each connected component of its complement add either $\pm \pi$ to $h_0$ using independent fair coins. Call the resulting function $h_1$. Now repeat this procedure independently in each connected component: sample an independent CLE$_4$, toss coins and add $\pm \pi$ to $h_1$ to obtain $h_2$. Iterate. Then it is known \cite{MS,ASW} that these piecewise constant fields $h_n$ converge to a GFF $\Gamma$. It is also possible to show that the nested CLE$_4$ used in this construction is a measurable function of $\Gamma$. Proposition \ref{propn::mainresult_withF} of the current article implies that one can construct the Liouville measures associated to $\Gamma$ by just taking the limit of measures \[M_n^\gamma(dz) = e^{\gamma h_n(z)}\CR(z; \D\setminus A^n)^\frac{\gamma^2}{2}dz.\]
Here $\CR(z; \D\setminus A^n)$ is the conformal radius of the point $z$ inside the $n$-th level loop. 

Observe that the above approximation is different from taking naively the exponential of $h_n$ and normalizing it pointwise by its expectation. In fact, it is not hard to see that in this setting the latter naive procedure that is used for mollifier and truncated series approximations would not give the Liouville measure. 

In the critical case, and keeping to the above concrete approximation of the GFF,  regularized Liouville measures can be given by the so-called derivative approximations:
\[D_n(dz) =  \left(-h_n(z) + 2 \log \CR^{-1}(z,\D\setminus A^n)\right)e^{2 h_n(z)}\CR(z; \D\setminus A^n)^{2}\, dz.\]
As the name suggests, they correspond to (minus) the derivative of the above measure $M_n^\gamma$ w.r.t. to $\gamma$, taken at the critical parameter $\gamma = 2$. We show that these approximate signed measures converge to a positive measure that agrees (up to a constant factor 2) with the limiting measure of \cite{Ai} described in Section \ref{sectLM}, and also to the critical Liouville measure constructed in \cite{DSRV2,EP}.

The connection between multiplicative cascades and the Liouville measure established by our construction makes it possible to directly adapt many techniques developed in the realm of branching random walks and multiplicative cascades to the study of the Liouville measure. This allows us to prove a ``Seneta--Heyde'' rescaling result in the critical regime by following closely the arguments for the branching random walk in \cite{AiSh}. In a follow-up paper, we will use this result to transfer another result from cascades, \cite{Madaule}, to the case of the Liouville measure, and to thereby answer a conjecture of \cite{DSRV} in the case of the GFF: we prove that under a suitable scaling, the subcritical measures converge to a multiple of the critical measure. Finally, our proofs are robust enough to study the Liouville measure in non-simply connected domains and also to study the boundary Liouville measure.

The rest of the article is structured as follows. We start with preliminaries on the GFF, its local sets and Liouville measure. Then, we treat the subcritical regime and discuss generalizations to non-simply connected domains and to the boundary Liouville measure. Finally, we handle the critical case: we first show that our construction agrees with both a construction by E. Aidekon (up to a constant factor $2$) and a mollifier construction of the critical Liouville measure; then, we consider the case of Seneta-Heyde scaling.

\section{Preliminaries on the Gaussian free field and its local sets}
\label{sectGFF}

Let $D\subseteq \R^2$ denote a bounded, open and simply connected planar domain. By conformal invariance, we can always assume that $D$ is equal to $\D$, the unit disk. Recall that the Gaussian Free Field (GFF) in $D$  can be viewed as a centered Gaussian process $\Gamma$, 
indexed by the set of continuous functions in $D$, with covariance given by 
\begin{equation}\label{GFF}
\E {(\Gamma,f) (\Gamma,g)}  =  \iint_{D\times D} f(x) G_D(x,y) g(y) d x d y.
\end{equation} 
Here $G_D$ is the Dirichlet Green's function  in $D$, normalized such that $G_D(x,y)\sim \log(1/|x-y|)$ as $x \to y$ for all $y \in D$. 

Let us denote by $\rho^\eps_z$ the uniform measure on the circle of radius $\eps$ around $z$. Then for all $z \in D$ and all $\eps > 0$, one can define $\Gamma_\eps := (\Gamma,\rho_z^\eps)$.  We remark that this concrete choice of mollifying the free field is of no real importance, but is just a bit more convenient in the write-up of the critical case.
 
An explicit calculation (see e.g. Proposition 3.2. in \cite{DS}) then shows that:
\begin{equation}\label{eqn::condmeanGammaA}
\E{\ \eps^{\frac{\gamma^2}{2}} \exp\left(\gamma (\Gamma,\rho^\eps_z)\right)}\left\{ \begin{array}{l l}
= \CR(z;D)^{\gamma^2/2} & \text{ if } d(z,\partial D)\geq \eps,\\
\leq 1 & \text{ if } d(z,\partial D)< \eps,
\end{array}\right.
\end{equation} 
where  $\CR(z;D)$ is the conformal radius of $z$ in the simply-connected domain $D$. This exact formula holds not only for the circle average, but for any mollifier $\tilde \rho_z^\eps$ that is radially-symmetric and supported in the disk of radius $\eps$ around $z$.

The Gaussian free field satisfies a spatial Markov property, and in fact it also satisfies a strong spatial Markov property. To formalise this, the concept of local sets was introduced in \cite{SchSh2}. They can be thought as the generalisation of stopping times to a higher dimension. 

\begin{defn}[Local sets]
	Consider a random triple $(\Gamma, A,\Gamma_A)$, where $\Gamma$ is a  GFF in $D$, $A$ is a random closed subset of $\overline D$  and $\Gamma_A$ a random distribution that can be viewed as a harmonic function, $h_A$, when restricted to $D \setminus A$.
	We say that $A$ is a local set for $\Gamma$ if conditionally on $A$ and $\Gamma_A$, $\Gamma^A:=\Gamma - \Gamma_A$ is a  GFF in $D \setminus A$. 
\end{defn}

Here, by a random closed set we mean a probability measure on the space of closed subsets of $\overline D$, endowed with the Hausdorff metric and its corresponding Borel $\sigma-$algebra. For simplicity, we will only work with local sets $A$ that are measurable functions of $\Gamma$ and such that $A\cup \partial D$ is connected. In particular, this implies that all connected components of $D\backslash A$ are simply-connected. We define $\F_A=\sigma(A)\vee \sigma (\Gamma_A)$. 

Other than the Markov property apparent from the definition, we will use the following simple properties of local sets. See for instance \cite {SchSh2,WWln2} for  further properties. 
	\begin{lemma}\label{BPLS}    $\ $
Let  $(A^n)_{n\in \N}$  be an increasing sequence of local sets measurable w.r.t. $\Gamma$. Then 
		\begin {enumerate} 
		\item $\F_{A^n} \subset \F_{A^{n+1}},$
		\item $\overline{\bigcup  A^n}$ is also a local set and $\Gamma_{A_N}\to\Gamma_{\overline{\bigcup  A^n}} $ in probability as $N\to \infty,$
		\item if $\overline{\bigcup  A^n}= \overline D$, then the join of the $\sigma$-algebras $\F_{A^n}$ is equal to $\sigma(\Gamma)$. Moreover, $\Gamma_n := \Gamma_{A^n}$ then converges to $\Gamma$ in probability in the space of distributions.
		\end {enumerate}
	\end{lemma}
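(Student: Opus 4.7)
The three statements express, in order, that $(\F_{A^n})_n$ is a filtration, that the boundary fields $\Gamma_n := \Gamma_{A^n}$ form a martingale converging to the boundary field of the limiting local set, and that when the union fills $D$ the martingale limit recovers $\Gamma$ itself. The key identity driving everything is that for any smooth test function $f$,
\[
\E{(\Gamma,f) \mid \F_{A^n}} \;=\; (\Gamma_n, f),
\]
which is a direct consequence of the local set definition: $\Gamma^{A^n}$ is a centred GFF on $D\setminus A^n$ given $\F_{A^n}$, hence has conditional mean zero when paired with $f$. Part (1) then reduces to checking that $A^n$ and $\Gamma_n$ are $\F_{A^{n+1}}$-measurable. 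This uses the nesting $A^n\subseteq A^{n+1}$ together with the fact that $\Gamma$ restricted to $A^{n+1}$ is determined by $(A^{n+1},\Gamma_{A^{n+1}})$ alone (since $\Gamma^{A^{n+1}}$ vanishes on $A^{n+1}$ as a distribution), so that any further dependence of $A^n$ on the independent GFF $\Gamma^{A^{n+1}}$ in $D\setminus A^{n+1}$ is ruled out by the locality of $A^n$ itself.

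For (3), fix a smooth test function $f$. Then $(\Gamma-\Gamma_n, f) = (\Gamma^{A^n}, f)$ is, conditionally on $\F_{A^n}$, a centred Gaussian with variance
\[
\iint f(x)\,G_{D\setminus A^n}(x,y)\,f(y)\,dx\,dy.
\]
Monotonicity gives $G_{D\setminus A^n}\le G_D$ pointwise, and the hypothesis $\overline{\bigcup_n A^n}=\overline D$ implies $G_{D\setminus A^n}(x,y)\to 0$ for almost every $(x,y)$, for instance because $d(x,\partial(D\setminus A^n))\to 0$. Dominated convergence then sends the variance to zero, so $(\Gamma_n,f)\to(\Gamma,f)$ in $L^2$. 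Since $f$ is arbitrary and each $(\Gamma_n,f)$ is $\F_{A^n}$-measurable, the limit $(\Gamma,f)$ is $\bigvee_n \F_{A^n}$-measurable for every $f$, and combined with the obvious reverse inclusion this gives the stated equality of $\sigma$-algebras.

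For (2), part (1) combined with the identity above makes $((\Gamma_n,f))_n$ an $L^2$-bounded martingale (the variance is controlled by $\E{(\Gamma,f)^2}$), which converges in probability and in $L^2$ to some limit $\Gamma_\infty(f)$; letting $f$ vary produces a random distribution $\Gamma_\infty$. To identify $\Gamma_\infty$ with $\Gamma_{A^\infty}$, where $A^\infty:=\overline{\bigcup_n A^n}$, the plan is to verify directly that $(A^\infty,\Gamma_\infty)$ satisfies the defining local set property. For $f$ with compact support in $D\setminus A^\infty$, monotonicity of the sequence forces $\mathrm{supp}(f)\subseteq D\setminus A^n$ for all $n$, and computing the conditional characteristic function $\E{\exp(\im t(\Gamma-\Gamma_n,f)) \mid \F_{A^n}}$ gives $\exp\bigl(-t^2\iint f\,G_{D\setminus A^n}f\,/2\bigr)$; passing to the limit using continuity of the Green's function under the monotone domain convergence $D\setminus A^n\searrow D\setminus A^\infty$ identifies $\Gamma-\Gamma_\infty$ as conditionally a GFF on $D\setminus A^\infty$. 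I expect this final Green's function continuity step to be the main obstacle: the measurability bookkeeping in (1) and the dominated convergence argument in (3) are essentially routine, but matching the martingale limit to the harmonic field of the closure relies on a careful domain-convergence argument together with the identification of conditional laws in the limit, which is the real content of the lemma.
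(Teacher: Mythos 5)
Your approach is essentially the same as the paper's, which is only two sentences long: (1) is attributed to local sets being measurable functions of $\Gamma$ plus ``the characterization of local sets found in [SchSh2]'', and (2), (3) are attributed to the Green's function convergence $G_{D\setminus A^n}\to G_{D\setminus \overline{\bigcup A^n}}$. Your parts (2) and (3) fill in exactly this Green's function mechanism in the natural way (variance of $(\Gamma^{A^n},f)$ tending to zero for (3); conditional characteristic functions plus domain convergence of Green's functions, then the tower property, for (2)), so there is no daylight between the two there.

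The one place where you are papering over the crux is part (1). You write that any dependence of $A^n$ on the independent GFF $\Gamma^{A^{n+1}}$ is ``ruled out by the locality of $A^n$ itself,'' but this is not a consequence of the local-set \emph{definition}: the definition is a conditional independence statement about $\Gamma^{A^n}$ given $\F_{A^n}$, and by itself does not say that $A^n$, as a functional of $\Gamma$, fails to depend on $\Gamma$ away from $A^n$. What makes your sentence true is precisely the Schramm--Sheffield characterization that the paper cites: for a local set determined by the field, on the event that $A$ avoids an open set $U$, $A$ is conditionally independent of $\Gamma$ restricted to $U$ given the complementary information. Applying this with $U$ running over the complement of $A^{n+1}$ is what forces $A^n$ (and then $\Gamma_{A^n}$) to be $\F_{A^{n+1}}$-measurable. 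Without invoking that characterization, your step from ``$A^n\subseteq A^{n+1}$'' to ``$A^n$ is $\F_{A^{n+1}}$-measurable'' is exactly the gap. You should also note that the Green's function convergence you use in (2) and (3) relies on the paper's standing hypothesis that $A^n\cup\partial D$ is connected (so $\partial(D\setminus A^n)$ has the needed regularity); without something of this sort, $G_{D\setminus A^n}\to G_{D\setminus\overline{\bigcup A^n}}$ can fail.
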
 
The property (1) follows from the fact that our local sets are measurable w.r.t. $\Gamma$ and the characterization of local sets found in \cite{SchSh2}. Properties (2) and (3) follow from the fact that when $A^n\cup \partial D$ is connected we have that $G_{D\backslash A^n}\to G_{D\backslash \overline{\bigcup A^n}}$.

In other words, one can approximate the Gaussian free field by taking an increasing sequence of measurable local sets $(A^n)_{n\in \N}$ and for each $n$ defining {$\Gamma_n := \Gamma_{A^n}$. As $\Gamma_n$ are measurable w.r.t. the GFF and also piecewise harmonic, they give very simple intrinsic approximations to the field. For example, one could intuitively think that $A^n$ are the sets that discover the part of the surface described by the GFF that is linked to the boundary and on which the GFF has height between $-n$ and $n$.

\subsection{Two useful families of local sets}

One useful family of local sets are the so-called two-valued local sets introduced in \cite{ASW} and denoted by $A_{-a,b}$. For fixed $a,b>0$, $A_{-a,b}$ is a local set of the GFF such that: the value of $h_A$ inside each connected component of $D\setminus A$ is constant with value either $-a$ or $b$; and that is thin in the sense that for all $f$ smooth we have $(\Gamma_A,f) = \int_{D \backslash A} f(z)h_A(z) \, dz$. The prime example of such a set is CLE$_4$ coupled with the Gaussian free field as $A_{-2\lambda, 2\lambda}$, where $\lambda$ is an explicit constant equal to $\lambda=\pi/2$ in our case \cite{MS,ASW}. In analogy with stopping times, they correspond to exit times of Brownian motion from the interval $[-a,b]$. We recall the main properties of two-valued sets: 

\begin {propn}
\label {twdesc2}
Let us consider $-a < 0 < b$. 
\begin {enumerate}
\item
When $a+b  < 2 \lambda$, there are no local sets of $\Gamma$ with the characteristics of $A_{-a,b}$.
\item
When $a+b \ge 2 \lambda$, it is possible to construct  $A_{-a,b}$ coupled with a GFF $\Gamma$. Moreover, the sets $A_{-a,b}$ are
\begin{itemize} 
	\item Unique in the sense that if $A'$ is another local set coupled with the same $\Gamma$,  such that for all $z \in D$, $h_{A'} (z) \in \{ -a ,b \}$ almost surely and $A'$ is thin in the sense above, 
	then $A' = A_{-a, b}$ almost surely.  
	\item Measurable functions of the GFF $\Gamma$ that they are coupled with.
	\item Monotonic in the following sense: if $[a,b] \subset [a', b']$ and $-a < 0 < b$ with $b+a \ge 2\lambda$,  then almost surely, $A_{-a,b} \subset A_{-a', b'}$. 
	\item $A_{-a,b}$ has almost surely Lebesgue measure 0.
	\item For any $z$, $\log \CR(z;D\backslash A_{-a,b})-\log \CR(z;D)$ has the distribution of the hitting time of $\{-a,b\}$ by a standard Brownian motion.
\end{itemize}

\end {enumerate}
\end {propn}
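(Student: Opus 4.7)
The plan is to treat item (e) as a blueprint: every candidate $A_{-a,b}$ must encode the exit of a Brownian motion from $[-a,b]$ through the conformal-radius clock. Concretely, for any $\Gamma$-measurable local set $A$, combining the circle-average martingale of $\Gamma$ with the decomposition $\Gamma=\Gamma_A+\Gamma^A$ and identity \eqref{eqn::condmeanGammaA} yields that, along any continuous exhaustion $(A_t)$, the process $h_{A_t}(z)$ parametrised by $t=\log\CR(z;D)-\log\CR(z;D\setminus A_t)$ is a continuous martingale of quadratic variation $t$, i.e.\ a time-changed Brownian motion started at $0$. This characterisation is the common engine behind all parts of the statement.

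For existence when $a+b\geq 2\lambda$, I would build $A_{-a,b}$ iteratively from the CLE$_4$ coupling. Set $A^0=\partial D$ and $h_{A^0}=0$; given $A^n$, in each component $O$ of $D\setminus A^n$ with $h_{A^n}|_O\notin\{-a,b\}$, sample an independent conformally mapped CLE$_4$ in $O$ and shift the heights inside its sub-components by $\pm 2\lambda$ using independent fair coins---unless this would overshoot $[-a,b]$, in which case I would instead perform a partial first-passage local-set exploration stopping the Brownian motion exactly on $\{-a,b\}$. By Lemma \ref{BPLS}(2), $\overline{\bigcup_n A^n}$ is a local set; the conformal radius shrinks by a definite factor in expectation at each step, so the procedure terminates almost surely with limiting height in $\{-a,b\}$, and thinness is preserved from CLE$_4$. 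For part (1), the same Brownian picture gives impossibility: a hypothetical local set with $h_A\in\{-a,b\}\subsetneq[-2\lambda,2\lambda]$ would stop the martingale strictly inside $(-2\lambda,2\lambda)$, but coupling it with a CLE$_4$ drawn from the same $\Gamma$ and using the monotonicity in part (2) leads to a contradiction with CLE$_4$ being the minimal local set whose harmonic extension equals $\pm 2\lambda$.

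The remaining properties are all derived from the Brownian characterisation together with the iterative construction. For uniqueness, given two candidates $A'$ and $A_{-a,b}$, I would run the iteration inside the components of $D\setminus A'$; since the time-changed martingale starts at $h_{A'}\in\{-a,b\}$, it is already absorbed, so no further exploration is possible and $A'=A_{-a,b}$ almost surely in view of the characterisation of local sets in \cite{SchSh2}. Measurability in $\Gamma$ is then automatic; monotonicity is built into the construction by continuing to explore inside components where $h\in\{-a,b\}\setminus\{-a',b'\}$; and vanishing of Lebesgue measure follows from dominated convergence together with the known fact that CLE$_4$ has zero area. The main technical obstacle is the partial first-passage step required when $a+b$ is not an integer multiple of $2\lambda$: one has to exhibit a local set inside a simply connected domain whose harmonic extension realises exactly a prescribed small-gap exit from $[-a,b]$, which needs a more delicate SLE$_4(\rho)$-type exploration rather than a plain CLE$_4$ layer, and this is where the bulk of the work of \cite{ASW} lies.
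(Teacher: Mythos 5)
The paper does not actually prove Proposition~\ref{twdesc2}: it is stated as a recall of results from \cite{ASW} (with additional background in \cite{SchSh2,WWln2}), so there is no internal proof to compare against. Your task here is therefore really to reproduce the proofs of \cite{ASW}, and your sketch captures some of the right high-level ideas (using the conformal-radius clock as a time change that turns $h_{A_t}(z)$ into a Brownian motion, building $A_{-a,b}$ by iterated CLE$_4$-type explorations) but has several genuine gaps that would not survive a careful write-up.

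First, the claim that for any $\Gamma$-measurable local set with a continuous exhaustion the height process is a continuous martingale with quadratic variation exactly $t=\log\CR(z;D)-\log\CR(z;D\setminus A_t)$ is itself one of the nontrivial outputs of \cite{ASW}, resting on specific SLE$_4(\rho)$-type explorations; it cannot be taken as a starting axiom for an ``arbitrary'' local set. Second, your uniqueness argument only establishes one inclusion: running the iterative construction inside $D\setminus A'$ and finding it absorbed at time $0$ tells you that $A_{-a,b}\subset A'$ (or more precisely that the re-exploration adds nothing), but you still need to rule out that $A'$ is \emph{strictly} larger while keeping $h_{A'}\in\{-a,b\}$. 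The actual uniqueness proof in \cite{ASW} uses the BTLS characterization and a separate argument to exclude ``extra'' pieces. Third, ``measurability is then automatic'' is not justified; measurability of $A_{-a,b}$ as a function of $\Gamma$ is one of the hardest points in \cite{ASW} and does not follow trivially from uniqueness or from the construction (in general, a local set uniquely determined in law by its coupling need not a priori be $\sigma(\Gamma)$-measurable). Fourth, your argument for part (1) (non-existence when $a+b<2\lambda$) is circular: you invoke a minimality property of CLE$_4$ and the monotonicity from part (2), but the monotonicity you cite is only stated for valid two-valued sets with gap $\geq 2\lambda$, so it cannot be applied to a hypothetical set with a smaller gap without further argument. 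The genuine proof uses the boundary behaviour of SLE$_4$ level lines and the fact that the first CLE$_4$ loop already moves the height by $\pm 2\lambda$; one cannot stop earlier. Finally, the ``partial first-passage step'' you gesture at when $a+b$ is not a multiple of $2\lambda$ is exactly where the bulk of the SLE$_4(\rho)$ machinery of \cite{ASW} is needed, and acknowledging that it is ``where the work lies'' is not the same as providing it.
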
 

Another nice class of local sets are those that only take one value in the complement of $A$. We call them first passage sets and denote them by $A_{a}$ (if they only take the value $a$). These correspond to one-sided hitting times of the Brownian motion: hence the name. They are of interest in describing the geometry of the Gaussian free field and are treated in more detail in \cite{ALS}. Here, we only provide one working definition and refer to \cite{ALS} for a more intrinsic definition, uniqueness and other properties not needed in the current paper. 

\begin{defn}[First passage set]\label{def::fps}
Take $a\geq 0$. We say that $A_{a}$ is the first passage set (FPS) of a GFF $\Gamma$, with height $a$, if it is given by $\overline{\bigcup_n A_{-n,a}}$.
\end{defn}

We need a few properties of these sets. The first follows from the definition, the second and third from calculations in \cite{ASW} Section 6:
\begin{itemize}
\item We have that $\Gamma_{A_{a}} = a- \nu_{a}$, where $\nu_a$ is a positive measure supported on $A_{a}$;
\item $A_{a}$ has zero Lebesgue measure;
\item For any $a_n \rightarrow \infty$ we have that $\overline{\bigcup A_{a_n}} = \bar{D}.$
\end{itemize}
Note that because the circle-average of the GFF $(\Gamma, \rho_z^\eps)$ is a.s. well-defined for all $z \in D$, $\eps > 0$ simultaneously, it also means that $(\nu_a,\rho_z^\eps)$ is a.s. well-defined and positive for all $z,\eps$ as above.
	
In fact these three properties characterize $A_{a}$ uniquely \cite{ALS}. However, in this paper we only need a weaker uniqueness statement that is a consequence of the following lemma:

\begin{lemma}\label{lem:fps}
Denote $A^1 = A_{-a,a}$ with $a \geq \lambda$ and define iteratively $A^n$ by exploring copies of $A_{-a,a}$ in each connected component of the complement of $A^{n-1}$. Then, almost surely for a dense countable set $z \in D$ the following holds: for $k\in \N$, let $n_z$ be the first iteration when $h_{A^{n_z}}(z) = ak$, the connected component  $D\backslash A^{n_z}$ containing $z$ is equal to the connected component of $D\backslash A_{ak}$ containing $z$. 
\end{lemma}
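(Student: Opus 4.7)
The plan is to use the uniqueness characterization of two-valued local sets in Proposition \ref{twdesc2}(2) to identify suitably stopped versions of the iteration $(A^n)_{n \ge 1}$ with the sets $A_{-ma,ka}$, and then pass to the limit $m\to\infty$ via Definition \ref{def::fps}.

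Fix $z$ in a countable dense subset $Z\subset D$ and $k\in\N$. Iterating the hitting-time description of Proposition \ref{twdesc2}(5) along the nested construction of $A^n$, the sequence $X_j := h_{A^j}(z)/a$ is a standard simple random walk on $\Z$. Set $n_z:=\inf\{j\ge 1: X_j=k\}$ and $M_z:=-\min_{0\le j\le n_z} X_j$; both are a.s.\ finite. For any $m\ge 1$ satisfying $(m+k)a\ge 2\lambda$, construct a candidate set $\tilde A_{-ma,ka}$ by running the iteration but \emph{freezing} every connected component of $D\setminus A^n$ the moment $h_{A^n}$ takes a value in $\{-ma, ka\}$ inside that component. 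Formally, $\tilde A_{-ma,ka}$ is the closure of the increasing union of the exploration boundaries contributed by components frozen at some step $N$; since in each initial subcomponent the associated walk a.s.\ exits $(-m, k)$ after finitely many steps, Lemma \ref{BPLS}(2) ensures that $\tilde A_{-ma, ka}$ is a well-defined local set measurable with respect to $\Gamma$. By construction $h_{\tilde A_{-ma,ka}}$ takes values only in $\{-ma,ka\}$, and thinness of each nested $A_{-a,a}$ propagates inductively (through the identity $(\Gamma_{A^n}, f) = \int f\, h_{A^n}$) to the limit. Proposition \ref{twdesc2}(2) then forces $\tilde A_{-ma,ka}=A_{-ma,ka}$ almost surely.

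Now choose $m>M_z$ with $(m+k)a\ge 2\lambda$. By the definition of $M_z$ and $n_z$, the walk $(X_j)$ stays strictly inside $(-m,k)$ for $j<n_z$ and first hits $\{-m,k\}$ at value $k$ at time $n_z$. Consequently, in the stopped iteration, the component of $z$ is frozen at exactly step $n_z$, and the connected component of $z$ in $D\setminus A_{-ma,ka}$ coincides with the component of $z$ in $D\setminus A^{n_z}$. This identification holds for every $m>M_z$, so that connected component is stable in $m$. Since $A_{ka}=\overline{\bigcup_m A_{-ma,ka}}$ by Definition \ref{def::fps}, the connected component of $z$ in $D\setminus A_{ka}$ also equals the component of $z$ in $D\setminus A^{n_z}$. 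Taking the intersection of these probability-one events over $(z,k)\in Z\times\N$ completes the proof.

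The main obstacle I anticipate is the rigorous verification that the stopped iteration produces a genuine local set of the GFF to which the uniqueness hypothesis of Proposition \ref{twdesc2}(2) applies: one must check both that the increasing-union construction yields a closed random set for which Lemma \ref{BPLS}(2) is valid, and that thinness survives the countably many iterative steps. All other ingredients---simple random walk identification, monotonicity of two-valued sets, and the defining limit for $A_{ka}$---are available directly from the preliminaries.
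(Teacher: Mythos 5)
Your argument is correct and rests on the same pivot as the paper's: identify a suitably stopped version of the iteration $(A^n)$ with a genuine two-valued set via the uniqueness statement in Proposition~\ref{twdesc2}, then pass to $A_{ak}$ through its defining limit. The packaging differs. You freeze each component as soon as $h$ reaches either $-ma$ or $ak$, so the stopped exploration is a two-valued set outright, and you send $m\to\infty$ by choosing $m$ larger than the running minimum $M_z$ of the walk; the component of $z$ is then stable in $m$ and carries over to $D\setminus A_{ak}$. The paper instead stops only when $h$ hits $ak$ (defining $B^n\subset A^n$), then upgrades $B^n$ to a two-valued set $\tilde B$ with values in $\{-\lceil an\rceil, ak\}$ by additionally exploring $A_{-\lceil an\rceil-c,\,ak-c}$ in each remaining component of value $c$, before invoking uniqueness; the passage from $A_{-\lceil an\rceil,ak}$ to $A_{ak}$ is left implicit. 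Your route avoids the patching step and makes the final limiting argument a bit more explicit, but both proofs put their weight on verifying that the resulting increasing union is a thin local set to which the uniqueness hypothesis applies; you correctly flag this as the crux, and the ingredient you defer is exactly what the paper supplies at that point, namely a Minkowski-dimension bound (cf.\ \cite[Proposition 4.3]{Se}), which you should cite where you assert that thinness propagates through the iteration.
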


\begin{proof}
The proof follows from the uniqueness of two-valued sets $A_{-a,b}$. Indeed, construct sets $B^n$ by taking $B^1 = A^1$ and then repeating the construction of $A^i$ only in the components where the value of $h_{B^n}$ is not yet $ak$. Thus, by construction $B^n \subset A^n$. Moreover, for any $z$ up to and including the first iteration where $\Gamma_{B^k}(z) = ak$, the connected component of the complement of $A^n$ and $B^n$ containing $z$ coincide.

	Now, note that for a fixed $z \in D$, $n_z$ is almost surely finite. Thus it suffices to prove that for all $n \in \N$, the set $B^n$ is contained in $A_{-\lceil an \rceil,ak}$ and that all connected components of $D\backslash B^n$ where $h_{B^n}$ takes the value $ak$ are connected components of $D\backslash A_{-\lceil an \rceil,ak}$ where $h_{A_{-\lceil an \rceil,ak}}$ is equal to $ak$. To see this, first note that $h_{B^n} \in \{-an, -a(n-1), \dots, ak\}$. In particular, in each connected component where $h_{B^n} = c \notin \{-\lceil an \rceil,ak\}$ we can construct the two-valued sets $A_{-\lceil an \rceil-c,ak-c}$. This gives us a local set $\tilde B$ s.t. $h_{\tilde B}$ takes only values in $\{-\lceil an \rceil, k\}$. It is also possible to see that $\tilde B$ is thin, by noting that inside each compact set its Minkowski dimension is smaller than $2$ (e.g. see \cite[Proposition 4.3]{Se}). Then, by uniqueness of the two-valued sets, Lemma \ref{twdesc2}, $\tilde B$ is equal to $A_{\lceil an \rceil,k}$. To finish, notice that the connected components of $D\backslash B^n$ where $h_{B^n}$ took the value $ak$ are also connected components of $\tilde B$ with the same value.
\end{proof}

In particular, from this lemma it follows that we can also construct $A_{a}$ in a different way: denote $A^1 = A_{-a,a}$ and define $A^2$ by iterating independent copies of $A_{-a,a}$ in each component of the complement of $D\setminus A^1$ where $h_{A_1} \neq a$. Repeat this procedure again in all components of the complement for which the value still differs from $a$. This iteration gives an increasing sequence of local sets $A^n$, whose limit is equal to $A_{a}$. For a concrete example, one could take $A_{-2\lambda, 2\lambda}$ to be equal to CLE$_4$ in its coupling with the GFF, and the above procedure would yield $A_{2\lambda}$. In fact the sets $(A_{2\lambda n})_{n\in \N}$  are exactly the sets that the author \cite{Ai} proposes as a basis for the construction of the Liouville measure.

\section{Overview of the Liouville measure and loop constructions of \cite{Ai}}

There are many ways to define the Liouville measure in the subcritical case, the differences amounting to how one approximates the underlying GFF. We will first describe the approximations using circle averages in the subcritical case. Then we will discuss the critical regime, and finally present the nested-loop based constructions from \cite{Ai} that are conjectured to give the Liouville measure. From now on we will set $D=\D$ for simplicity.

\subsection{Subcritical regime}

Let us recall that we denote $\Gamma_\eps(z) = (\Gamma,\rho^\eps_z)$ the $\eps$-circle average of the GFF around the point $z$ as before. It is known that $(\Gamma_\eps(z):\eps\geq 0, z\in \D)$ is a continuous Gaussian processes that converge to $\Gamma$ a.s. in the space of distributions as $\eps \rightarrow 0$. Thus, one can define approximate Liouville measures on $\mathcal{B}(\D)$ by \[\mu^\gamma_\eps(dz) := \eps^{\frac{\gamma^2}{2}} \exp\left(\gamma \Gamma_{\eps}(z)\right)dz.\]
In the subcritical regime we have the following result \cite{DS, Ber}:

\begin{thm}\label{thm::subcritical_mollified}
For $\gamma < 2$ the measures $\mu^\gamma_\eps$ converge to a non-trivial measure $\mu^\gamma$ weakly in probability. Moreover, for any fixed Borel set $\Os \subseteq \D$ we have that $\mu^\gamma_\eps(\Os)$ converges in $L^1$ to $\mu^\gamma(O)$.
\end{thm}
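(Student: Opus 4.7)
My plan is to prove the theorem by establishing $L^1$-convergence of $\mu^\gamma_\eps(\Os)$ for every Borel $\Os\subseteq \D$; the claimed weak convergence in probability and non-triviality of the limit then follow by a standard separability argument together with the identity $\E{\mu^\gamma_\eps(\Os)} = \int_\Os \CR(z;\D)^{\gamma^2/2}\,dz$ coming from \eqref{eqn::condmeanGammaA}. Restricting to dyadic scales $\eps_n=2^{-n}$, a direct computation based on the fact that $\Gamma_\eps(z)$ evolves approximately as a Brownian motion in $-\log \eps$ shows that $\mu^\gamma_{\eps_n}(\Os)$ is essentially a non-negative martingale. It therefore suffices to establish uniform integrability of this family; the martingale convergence theorem then yields $L^1$-convergence along $\eps_n$, and a short monotonicity argument in $\eps$ upgrades this to the full limit.

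In the $L^2$-regime $\gamma<\sqrt 2$, uniform integrability follows from the direct second-moment calculation
\begin{equation*}
\E{\mu^\gamma_\eps(\Os)^2} = \iint_{\Os\times \Os} \eps^{\gamma^2} \exp\bigl(\gamma^2\, \E{\Gamma_\eps(z)\Gamma_\eps(w)}\bigr)\, dz\, dw,
\end{equation*}
combined with the bound $\E{\Gamma_\eps(z)\Gamma_\eps(w)}\le \log_+(1/(|z-w|\vee \eps))+O(1)$. The resulting integrand is of order $|z-w|^{-\gamma^2}$ on the off-diagonal, which is integrable on $\Os\times\Os$ precisely when $\gamma^2<2$.

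The main difficulty is the range $\sqrt 2\le \gamma<2$, where the second moment genuinely diverges. Here I would adopt Berestycki's truncation strategy: fix $\alpha\in(\gamma,2)$ and restrict $\mu^\gamma_\eps$ to the ``$\alpha$-good'' points where $\Gamma_{\eps'}(z)\le \alpha\log(1/\eps')$ for every $\eps'\in[\eps,\eps_0]$, yielding a truncated measure $\mu^{\gamma,\alpha}_\eps$. On this good event, a Girsanov-type computation effectively shifts the law of $\Gamma_\eps(z)$ by $\gamma\log(1/\eps)$, so the would-be blow-up in the second moment is damped by a Gaussian tail factor of order $\eps^{(\alpha-\gamma)^2/2}$ which competes successfully with the divergent $\eps^{-\gamma^2}$ as long as $\alpha<2$. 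This gives uniform $L^2$-bounds on $\mu^{\gamma,\alpha}_\eps(\Os)$, hence $L^2$-convergence. The final step is to show $\E{\mu^\gamma_\eps(\Os)-\mu^{\gamma,\alpha}_\eps(\Os)}\to 0$ as $\eps\to 0$ followed by $\alpha\to 2$, which reduces to the thick-points phenomenon that for $\gamma<2$ almost all mass of $\mu^\gamma_\eps$ is supported on $z$ with $\Gamma_\eps(z)/\log(1/\eps)$ close to $\gamma$.

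The main obstacle is precisely this $L^2$-supercritical regime: controlling the joint law of circle averages at two nearby points, which governs the truncated second moment, requires a careful radial Markov decomposition combined with Gaussian tail estimates rather than naive orthogonality of martingale increments. If the bookkeeping becomes unwieldy, a cleaner alternative is to invoke Shamov's axiomatic characterisation \cite{Shamov} by verifying Cameron--Martin shift compatibility of subsequential limits, but the truncation route following \cite{Ber} is the most self-contained and closest to the approximations used throughout the rest of the paper.
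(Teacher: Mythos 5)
First, a point of comparison: the paper does not prove this theorem at all --- it is imported verbatim from \cite{DS,Ber}, so your proposal is really being measured against those references rather than against anything in the text. Your overall strategy (direct second moment for $\gamma<\sqrt2$, Berestycki-style truncation to $\alpha$-good points plus a Girsanov tilt for $\sqrt2\le\gamma<2$, then removing the truncation via the thick-points estimate) is exactly the strategy of \cite{Ber}, and the individual estimates you quote --- the covariance bound $\E{\Gamma_\eps(z)\Gamma_\eps(w)}\le\log_+(1/(|z-w|\vee\eps))+O(1)$, the competition between $\eps^{(\alpha-\gamma)^2/2}$ and $\eps^{-\gamma^2}$ --- are the right ones.

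There is, however, one genuine gap in how you pass from moment bounds to convergence. The circle-average measure $\mu^\gamma_\eps(\Os)$ is \emph{not} a martingale in any filtration that works simultaneously for all $z$: the radial Markov property makes $\eps\mapsto\Gamma_\eps(z)$ an exact Brownian motion in $-\log\eps$ only with respect to the filtration of circles centred at that particular $z$, and conditionally on the circle averages at scale $\delta>\eps$ around \emph{all} points, the conditional mean of $\Gamma_\eps(z)$ is not $\Gamma_\delta(z)$ (nearby circles intersect $B(z,\delta)$ and carry extra information). Consequently, neither uniform integrability of $\mu^\gamma_\eps(\Os)$ nor uniform $L^2$-bounds on $\mu^{\gamma,\alpha}_\eps(\Os)$ suffices: without the martingale structure, boundedness does not imply convergence. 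What is missing is the Cauchy property, i.e.\ an estimate showing $\E{\bigl(\mu^{\gamma,\alpha}_\eps(\Os)-\mu^{\gamma,\alpha}_\delta(\Os)\bigr)^2}\to0$ as $\eps,\delta\to0$. This two-scale second-moment computation --- which requires controlling the joint law of circle averages at two points \emph{and} two scales, again via Girsanov on the good event --- is the technical core of \cite{Ber} and is precisely the step your sketch replaces with ``the martingale convergence theorem.'' (An alternative patch is to compare the circle average with an exact-martingale regularisation such as a white-noise or Karhunen--Lo\`eve truncation and prove uniqueness of the limit across regularisations, which is closer to what \cite{DS} and \cite{Shamov} do; your fallback to Shamov's characterisation would indeed close the gap, but it is doing real work, not merely tidying bookkeeping.) The remaining steps --- deducing weak convergence in probability from $L^1$-convergence on a countable generating class, and non-triviality from $\E{\mu^\gamma(\Os)}=\int_\Os\CR(z;\D)^{\gamma^2/2}\,dz>0$ --- are fine and match Remark \ref{IdWC}.
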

In fact it is known that the measure is also unique, in the sense that the same limit can be obtained using any sufficiently nice mollifier instead of the circle average. We will show that the approximations using local sets give the same measure.

\subsection{Critical regime}\label{sec:cr}

It is known that for $\gamma \geq 2$, the measures $\mu^\gamma_\eps$ converge to zero \cite{RV}. To define the critical measures an additional renormalization is therefore required. One way to do it is to use the so-called derivative martingale, originating from studies on branching random walks. Define
\begin{equation}\label{eqn::nu_eps}
 \nu_\eps(dz):= \left. \frac{\partial }{\partial \gamma} \right | _{\gamma=2}\mu_\eps^\gamma (dz) =(-\Gamma_\eps(z)+2 \log(1/\eps))\eps^2\exp\left(2\Gamma_\eps(z)\right) dz\end{equation}
It has been recently shown in \cite[Theorem 1.1]{EP} that $\nu_{\eps}$ converges weakly in probability to a non-trivial limiting measure $\mu_2'$ as $\eps\to 0$. Moreover, $\mu_2'$ coincides with the critical Liouville measure defined in \cite{DSRV,DSRV2}. We will again show that the approximations using local sets converge towards same measure.

Another way to define the critical measure is to use the so-called Seneta-Heyde renormalization \cite{AiSh, DSRV2}. In the case of the circle-average process the approximating measures would be defined as:
\[\bar{\nu_\eps}(dz) := \sqrt{\log 1/\eps}\mu^2_\eps(dz).\]
It has been shown \cite{HRVdisk, JS} that $\bar{\nu_\eps}$ converges in probability to $\sqrt{\frac{2}{\pi}}\mu'_2$ as $\eps \rightarrow 0$. We will prove an analogous result in our setting.

\subsection{Measures constructed using nested loops}
\label{sectLM}
In \cite{Ai} the author proposes a construction of measures, analogous to the Liouville measure, using nested conformally-invariant loop ensembles. We will now describe it in a concrete context that is related to this paper. 

Consider a CLE$_4$, and inside each loop toss an independent fair coin. Keep the loops with heads on top, and sample new CLE$_4$ loops in the others. Also toss new independent coins inside these loops. Keep track of all the coin tosses for each loop and repeat the procedure inside each loop where the number of heads is not yet larger than the number of tails. Define the resulting set as $\tilde A^1$. Now define $\tilde A^k$ iteratively by sampling an independent copy of $\tilde A^1$ inside each connected component of $\D \setminus \tilde A^{k-1}$.

For any Borelian $\Os\subseteq \D$ we can now define 

\begin{equation}\label{aic}
\tilde M^\gamma_k(\Os) = \frac{1}{\mathbb{E}\big[\CR(0, \D \setminus \tilde A^1)^{\gamma^2/2}\big]^k}\int_{\Os \cap \D \backslash \tilde A^k}\CR(z, \D \setminus \tilde A^k)^{\frac{\gamma^2}{2}}dz
\end{equation}

It is shown in \cite{Ai} that for $\gamma < 2$ the measures defined by $\tilde M^\gamma_k$ converge weakly almost surely to a non-trivial measure $\tilde M^\gamma$. It is also conjectured there that the limiting measures coincide with the Liouville measures $\mu^\gamma$. We will prove this statement below.

It is further proved in \cite{Ai} that for $\gamma \geq 2$, these measures converge almost surely to zero. In the critical case, however, one can again define a derivative martingale $\tilde D^\gamma_k$ by taking a derivative with respect to $-\gamma$. In other words one sets:
\[ \tilde D^\gamma_k(\Os) = - 2\frac{\partial}{\partial \gamma} \tilde{M}^\gamma_k(\Os)\] 
(we include the factor 2 here to be consistent with the definition in \cite{Ai}). It is shown in \cite{Ai} that the measures $\tilde D_k:=\tilde D^2_k$ converge to a non-trivial positive measure $\tilde D_\infty$. In this paper, we prove that $\tilde D_\infty=2\mu'_2$.

\section{Local set approximations of the subcritical Liouville measure} \label{section::subcritical_localsets}

In this section we prove that one can approximate the Liouville measure of a GFF in a simply connected domain using increasing sequences of local sets $(A^n)_{n\in \N}$  with $\overline{\bigcup A^n} = \bar{\D}$. In particular, the measure constructed in \cite{Ai} will fit in our framework and thus it agrees with the Liouville measure. In fact, for simplicity, we first present the proof of convergence in this specific case.

First, recall that we denote by $h_A$ the harmonic function given by the restriction of $\Gamma_A$ to $\D\setminus A$. For any local set $A$ with Lebesgue measure 0 and bounded $h_A$, we define for any Borelian set $\Os\subseteq \D$:
\[M^\gamma(\Os,A):=\int_{\Os} e^{ \gamma h_A} \CR(z; \D\setminus A)^{\gamma^2/2} \, dz. \]
Notice that as $h_A$ is bounded, we can define it arbitrarily on the 0 Lebesgue measure set $A$.

\begin{propn}\label{propn::Ea} Fix $\gamma\in [0,2)$. For $a > 0$, let  $A_{a}$   be the $a$-FPS of $\Gamma$ and $\mu^\gamma$ be the Liouville measure defined by $\Gamma$. Then for each Borelian set $\Os\subseteq \D$ (including $\Os=\D$), \[M_a^\gamma(\Os) := M^\gamma(\Os, A_{a})=e^{\gamma a}\int_{\Os}\CR(z;\D\backslash  A_a)^{\gamma^2/2}dz\] is a martingale with respect to $\F_{A_{a}}$ and converges a.s. to $\mu^\gamma(\Os)$ as $a \to \infty$. Thus, a.s. the measures $M_a^\gamma$ converge weakly to $ \mu^\gamma$.
\end{propn}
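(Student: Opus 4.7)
The strategy is to identify $M_a^\gamma(\mathcal{O})$ with the conditional expectation $\mathbb{E}[\mu^\gamma(\mathcal{O})\mid\mathcal{F}_{A_a}]$ and then invoke L\'evy's upward theorem. This simultaneously yields the martingale property and the a.s.\ (and $L^1$) convergence $M_a^\gamma(\mathcal{O})\to\mu^\gamma(\mathcal{O})$, since $\mathcal{F}_{A_a}\uparrow \sigma(\Gamma)$ as $a\to\infty$ by Lemma \ref{BPLS}(3) combined with the fact that $\overline{\bigcup_{a>0} A_a} = \overline{\D}$ (stated just after Definition \ref{def::fps}).

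The core step is the identity $\mathbb{E}[\mu^\gamma(\mathcal{O})\mid\mathcal{F}_{A_a}] = M_a^\gamma(\mathcal{O})$. The Markov property from the definition of local sets yields that, conditionally on $\mathcal{F}_{A_a}$, in each connected component $U$ of $\D\setminus A_a$ we can write $\Gamma|_U = a + \Gamma^U$ with $\Gamma^U$ an independent GFF in $U$. Together with the shift covariance $\mu^\gamma_{\Gamma+c}(dz) = e^{\gamma c}\mu^\gamma_{\Gamma}(dz)$ (immediate at the level of the circle-average approximations $\mu^\gamma_\eps$ and passed to the limit via Theorem \ref{thm::subcritical_mollified}), this gives $\mu^\gamma|_U = e^{\gamma a}\mu^{\gamma,U}_{\Gamma^U}$. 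Taking conditional expectations, using that $A_a$ has Lebesgue measure zero, and invoking the standard expectation formula $\mathbb{E}[\mu^{\gamma,U}_{\Gamma^U}(dz)] = \CR(z,U)^{\gamma^2/2}\,dz$ (which follows from \eqref{eqn::condmeanGammaA} applied in each subdomain, via conformal invariance), I then sum over components to obtain $e^{\gamma a}\int_\mathcal{O}\CR(z,\D\setminus A_a)^{\gamma^2/2}\,dz = M_a^\gamma(\mathcal{O})$.

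Once this identity is in hand, the martingale property of $(M_a^\gamma(\mathcal{O}))_{a\ge 0}$ with respect to $(\mathcal{F}_{A_a})_{a\ge 0}$ is automatic from the tower property, using monotonicity $A_a\subset A_b$ for $a<b$ (which follows from Definition \ref{def::fps} and the monotonicity of two-valued sets in Proposition \ref{twdesc2}) and Lemma \ref{BPLS}(1). The integrability hypothesis needed for L\'evy's theorem holds since $\mathbb{E}[\mu^\gamma(\mathcal{O})]=\int_\mathcal{O}\CR(z,\D)^{\gamma^2/2}\,dz<\infty$. To upgrade the pointwise a.s.\ convergence $M_a^\gamma(\mathcal{O})\to \mu^\gamma(\mathcal{O})$ to a.s.\ weak convergence of measures, I would run the argument simultaneously on a countable convergence-determining family (for instance, dyadic boxes in $\D$) and use the outer regularity of both limiting and approximating measures.

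The main technical point I anticipate is making the decomposition $\mu^\gamma|_U = e^{\gamma a}\mu^{\gamma,U}_{\Gamma^U}$ fully rigorous: the circle-average construction of $\mu^\gamma$ does not manifestly respect the Markov decomposition across $A_a$, because the circle of radius $\eps$ around $z\in U$ can protrude out of $U$ once $\eps>\dist(z,\partial U)$. A clean way to handle this is to work with $\mu^\gamma_\eps$ restricted to $\{z\in U:\dist(z,\partial U)>\eps\}$, where the decomposition is transparent, and then let $\eps\to 0$ while exhausting $U$, using the $L^1$ convergence in Theorem \ref{thm::subcritical_mollified} to pass to the limit. Everything else is bookkeeping with the Markov property and the filtration.
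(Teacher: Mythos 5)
Your overall strategy — identify $M_a^\gamma(\Os)$ with $\E{\mu^\gamma(\Os)\mid\F_{A_a}}$, then invoke martingale convergence using $\overline{\bigcup_a A_a}=\overline{\D}$ and Lemma \ref{BPLS} — is exactly the paper's, and the technical point you flag about circles protruding past $\partial U$ is handled there precisely as you suggest: by conditioning $\mu^\gamma_\eps$ on $\F_{A_a}$ and splitting the integral at the $\eps$-enlargement $A_a^\eps$. So the skeleton is right.

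There is, however, a genuine gap in the step where you ``sum over components, using that $A_a$ has Lebesgue measure zero.'' To write $\E{\mu^\gamma(\Os)\mid\F_{A_a}}=\sum_U\E{\mu^\gamma(\Os\cap U)\mid\F_{A_a}}$ you need $\mu^\gamma(\Os\cap A_a)=0$ a.s.\ (or at least that its conditional expectation vanishes), and $Leb(A_a)=0$ does \emph{not} imply this: GMC measures are singular with respect to Lebesgue measure and are carried by Lebesgue-null sets, so there is no a priori reason a random Lebesgue-null set must be $\mu^\gamma$-null. The fix you sketch (restrict $\mu^\gamma_\eps$ to points at distance $>\eps$ from $\partial U$) only establishes the restriction identity $\mu^\gamma|_U = e^{\gamma a}\mu^{\gamma,U}_{\Gamma^U}$ component by component; it still leaves the $A_a$-contribution unaccounted for. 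The paper closes this by never splitting $\mu^\gamma$ itself: it works entirely with $\E{\mu_\eps^\gamma(\cdot)\mid\F_{A_a}}$, bounds the contribution from $\Os\cap A_a^\eps$ by $e^{\gamma a}\,Leb(A_a^\eps)$ using $\Gamma_{A_a}\le a$ together with the variance comparison $\operatorname{Var}\left((\Gamma^{A_a},\rho_z^\eps)\mid\F_{A_a}\right)\le\operatorname{Var}\left((\Gamma,\rho_z^\eps)\right)$, and lets $\eps\to 0$. As a by-product this shows $\E{\mu^\gamma(A_a)\mid\F_{A_a}}=0$, i.e.\ the fact you tacitly assumed comes out as a \emph{consequence} of the argument rather than an input. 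If you carry out your ``clean fix'' simultaneously over all components — i.e.\ with $\D\setminus A_a^\eps$ rather than each $U$ separately — you recover the paper's computation and the gap disappears.
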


Before the proof, let us see how it implies that the martingales defined in \cite{Ai} converge to the Liouville measure:

\begin{cor}\label{IdPhi}
The martingales $\tilde M^\gamma_k$ defined in \cite{Ai} converge weakly a.s. to  $ \mu^\gamma$. 
\end{cor}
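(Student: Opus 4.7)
The approach is to identify $\tilde M_k^\gamma$ with $M^\gamma_{2\lambda k}$ from Proposition \ref{propn::Ea}, and then invoke that proposition directly. So the whole corollary reduces to two matches: the sets $\tilde A^k$ must be shown to equal the first passage sets $A_{2\lambda k}$, and the $k$-th power of the normalising constant in \eqref{aic} must be shown to equal $e^{\gamma \cdot 2\lambda k}$.

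First I would verify that $\tilde A^k = A_{2\lambda k}$. By the CLE$_4$/GFF coupling, the initial CLE$_4$ used in the construction of \cite{Ai} is $A_{-2\lambda, 2\lambda}$, and nesting CLE$_4$'s inside each loop while tossing independent fair coins is exactly the iterated $A_{-2\lambda, 2\lambda}$ procedure of Lemma \ref{lem:fps}. The stopping rule ``number of heads is larger than number of tails'' stops as soon as the cumulative sum of the $\pm 2\lambda$ coin tosses first becomes strictly positive, i.e.\ reaches $+2\lambda$; the remark following Lemma \ref{lem:fps} then identifies the resulting set $\tilde A^1$ with $A_{2\lambda}$. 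The further nesting in the definition of $\tilde A^k$ is, by the Markov property of the GFF inside each component of $\D \setminus \tilde A^{k-1}$, equivalent to continuing the iterated $A_{-2\lambda, 2\lambda}$ construction until the total boundary value in each component first reaches $+2\lambda k$, so Lemma \ref{lem:fps} yields $\tilde A^k = A_{2\lambda k}$.

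Next I would match the normalising constants. Applying Proposition \ref{twdesc2}(5) to $A_{-n, 2\lambda}$ and sending $n \to \infty$ (Definition \ref{def::fps}), $-\log \CR(0, \D \setminus A_{2\lambda})$ has the law of the first hitting time $T_{2\lambda}$ of $2\lambda$ by a standard Brownian motion started at $0$. The classical Laplace transform $\mathbb{E}[e^{-\alpha T_a}] = e^{-a\sqrt{2\alpha}}$ with $\alpha = \gamma^2/2$ therefore gives
\[
\mathbb{E}\bigl[\CR(0, \D \setminus \tilde A^1)^{\gamma^2/2}\bigr] = e^{-2\lambda \gamma},
\]
so the prefactor in the definition of $\tilde M_k^\gamma$ is $e^{2\lambda \gamma k}$, exactly matching the $e^{\gamma a}$ appearing in $M_a^\gamma$ with $a = 2\lambda k$.

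Combining the two steps, and using that $A_{2\lambda k}$ has zero Lebesgue measure, so that integration over $\Os$ and over $\Os \cap (\D \setminus \tilde A^k)$ coincide, we obtain $\tilde M_k^\gamma(\Os) = M^\gamma_{2\lambda k}(\Os)$ a.s.\ for every Borel $\Os \subseteq \D$. Weak a.s.\ convergence to $\mu^\gamma$ is then immediate from Proposition \ref{propn::Ea} applied along the deterministic sequence $a = 2\lambda k \to \infty$. The only step that is not pure bookkeeping or a one-line Brownian motion calculation is the identification $\tilde A^k = A_{2\lambda k}$, which is the genuine input from the theory of two-valued and first passage sets via Lemma \ref{lem:fps}; this is where one has to be a bit careful, since it uses uniqueness of two-valued sets (Proposition \ref{twdesc2}) together with the Markov property in each layer of the nesting.
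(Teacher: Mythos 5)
Your proposal is correct and follows essentially the same route as the paper: identify $\tilde A^k$ with the first passage set $A_{2\lambda k}$ via the CLE$_4$/GFF coupling and Lemma \ref{lem:fps}, compute the normalising constant by relating $-\log\CR(0,\D\setminus A_{2\lambda})$ to a Brownian hitting time, use $Leb(A_{2\lambda k})=0$ to drop the restriction to $\D\setminus\tilde A^k$, and then invoke Proposition \ref{propn::Ea}. The only (minor) difference is in the normalisation step: the paper first uses conformal invariance to note that the increments $\log\CR(0,\D\setminus\tilde A^i)-\log\CR(0,\D\setminus\tilde A^{i-1})$ are i.i.d.\ and cites the hitting-time law for $\tilde A^k$ directly, whereas you obtain the law for $\tilde A^1$ from Proposition \ref{twdesc2}(5) by letting $n\to\infty$ in $A_{-n,2\lambda}$ and then simply raise the one-step Laplace transform $e^{-2\lambda\gamma}$ to the $k$-th power, which is a slightly more self-contained way to reach the same identity.
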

\begin{proof}
As a consequence of Lemma \ref{lem:fps}, the fact that $A_{-2\lambda,2 \lambda}$ has the law of CLE$_4$ and the fact that the value of its corresponding harmonic function is independent in each connected component of $\D \backslash A_{-2\lambda,2 \lambda}$ \cite{MS,ASW}, we see that $\tilde A^1$ of Section \ref{sectLM} is equal in law to $A_{2\lambda}$. Furthermore, the sequence $(\tilde A^k)_{k \in \N}$ has the same law as the sequence $(A_{2\lambda k})_{k \in \N}$.

 Now, by the iterative construction and conformal invariance the random variables 
\[\log \CR(0, \D \setminus \tilde A^i) - \log \CR(0, \D \setminus \tilde A^{i-1}) \]
with $A^0 = \emptyset$ are i.i.d. Thus, $\mathbb{E}\big[\CR(0, \D \setminus \tilde A^1)^\frac{\gamma^2}{2}\big]^{k} = \mathbb{E}\big[\CR(0, \D \setminus \tilde A^k)^\frac{\gamma^2}{2}\big]$.

Moreover, it is known from \cite{SSW, ASW} that $-\log \CR(0, \D \setminus \tilde A^k)$ corresponds precisely to the hitting time of $k\pi$ by a standard Brownian motion started from zero. In our case, when $2\lambda = \pi$, we therefore see that \[e^{\gamma 2\lambda k} = \mathbb{E}\big[\CR(0, \D \setminus \tilde A^1)^\frac{\gamma^2}{2}\big]^{-k}.\] 
Furthermore, since $Leb(A_{2\lambda})=0$ implies that $M_a^\gamma(\Os \cap A_{2\lambda}) = 0$, we have that $M_{2\lambda k}^\gamma$ agrees with the measure $\tilde M^\gamma_k$ defined in \eqref{aic}. Hence Proposition \ref{propn::Ea} confirms that the limit of $\tilde{M}_k^\gamma$ corresponds to the $\gamma$- Liouville measure.

\end{proof}

\begin{remark0}\label{IdWC}
In order to avoid repetition, we recall here as a remark the standard argument showing that the almost sure weak convergence of measures is implied by the almost sure convergence of $M_a^\gamma(\Os)$ over all boxes $\Os\subset \D$ with dyadic coordinates, and the convergence of the total mass $M_a^\gamma(\D)$. This follows from two observations. Firstly, since all the approximate measures $M_a^\gamma$ have zero mass on the boundary $\partial \D$, we can extend them to Radon measures on $\bar{\D}$. We do this because any closed sub-space of Radon measures on $\D$ with uniformly bounded mass is compact (with respect to the weak topology), and therefore we have subsequential limits in this space. Secondly, the boxes $\Os\subset \D$ with dyadic coordinates generate the Borel $\sigma$-algebra on $\D$. This identifies any subsequential limit as a measure on $\bar{\D}$ uniquely (and it must have zero mass on the boundary) since we know that the total masses $M_a^\gamma(\D)=M_a^\gamma(\bar{\D})$ converge.

Notice that $\mu^\gamma_\epsilon$ does not converge in the strong topology of measures. This follows from the fact that almost surely $\mu^\gamma$ is not absolutely continuous with respect to Lebesgue measure.
\end{remark0}

\begin{proofof}{Proposition \ref{propn::Ea}} 
By Remark \ref{IdWC}, it suffices to prove the convergence statement for $M_a^\gamma(\Os)$ (with $\Os\subseteq \D$ arbitrary).	When $\gamma\in [0,2)$, we know that $\mu^\gamma_\eps(\Os)\to \mu^\gamma(\Os)$, in $\mathcal L^1$ as $\eps \to 0$, where $\mu^\gamma_{\eps}$ is as in Theorem \ref{thm::subcritical_mollified}. Thus, \[\E{\mu^\gamma(\Os)\mid \F_{A_{a} }} = \lim_{\eps \to 0} \E{\mu_\eps^\gamma(\Os)\mid \F_{A_{a} }}.\] 	
	The key is to argue that 
	\begin{equation}\label{eq:cond}
	\lim_{\eps \to 0}\E{ \mu_\eps^\gamma(\Os)\mid \F_{A_{a} }} = M_a^\gamma(\Os).
	\end{equation}
	 Then $M_a^\gamma(\Os) = \E{\mu^\gamma(\Os)\mid \F_{A_{a} }}$ and we can conclude using the martingale convergence theorem and the fact that $\overline{\bigcup A_{a}} = \bar{\D}$ (so that $\mathcal{F}_\infty=\sigma(\Gamma)$). 
	
	To prove \eqref{eq:cond}, define $A_{a}^\eps$ as the $\eps$-enlargement of $A_{a}$. By writing $\Gamma=\Gamma_{A_{a}}+\Gamma^{A_{a}}$ and using that $(\Gamma_{A_{a}},\rho_\eps^z)= a$ for any $z \in \D \backslash A_{a}^\eps$, we have
	\[\E{\left.\int_{\Os \backslash A_{a}^\eps} \e^{\gamma(\Gamma, \rho_\eps^z)}\eps^{\gamma^2/2} \, dz\right| \F_{A_{a}}}= \int_{\Os \backslash A_{a}^\eps} \e^{\gamma a}\eps^{\gamma^2/2} \E{\left.\e^{(\Gamma^{A_{a}}, \rho_\eps^z)}\right| \F_{A_{a}}}dz\]
	Using \eqref{eqn::condmeanGammaA} we recognize that the right hand side is just $M_a^\gamma(\Os \backslash A_{a}^\eps)$.

On the other hand, $(\Gamma_{A_{a}},\rho_\eps^z) \leq a$ for any $z$, and the conditional variance of $(\Gamma^{A_a},
	\rho_\eps^z)$ given $\F_{A_{a}}$ is less than that of $(\Gamma, \rho_\eps^z)$. Thus we can bound 
\[\eps^{\gamma^2/2}\E{\left.\e^{(\Gamma^{A_{a}} + \Gamma_{A_{a}}, \rho_\eps^z)}\right|\F_{A_{a}}} \leq \e^{\gamma a}\CR(z,\D)^{\gamma^2/2}\]
	and it follows (using that $\CR(z, \D) \leq 1$ for all $z\in \D$) that
	\[\E{\left.\int_{\Os\cap A_{a}^\eps} \e^{\gamma(\Gamma, \rho_\eps^z)}\eps^{\gamma^2/2} \, dz\right| \F_{A_{a}}} \leq Leb(A_{a}^\eps) e^{\gamma a}.\]
	Since $A_{a}$ has zero Lebesgue measure, we have $Leb(A_{a}^\eps) = o_\eps(1)$. This concludes \eqref{eq:cond} and the proof.
\end{proofof}

We now state a more general version of this result, which says that one can construct the Liouville measure using a variety of local set approximations. The proof is a simple adaptation of the proof above. We say that a generalized function $T$ on $\D$, for which the circle-average process $T_\eps(z) := (T,\rho_z^\eps)$ can be defined, is bounded from above by $K$ if for all $z \in D$ and $\eps >0$, we have that $T_\eps(z) \leq K$. 

\begin{propn}\label{propn::mainresult_withF}
	Fix $\gamma\in [0,2)$ and let $(A^n)_{n \in \N}$ be an increasing sequence of local sets for a GFF $\Gamma$ with $\overline{\bigcup_{n \in \N} A^n} = \overline \D$. Suppose that almost surely for all $n \in \N$, we have that $Leb(A^n)=0$ and that $\Gamma_{A^n}$ is bounded from above by $K_n$ for some sequence of finite $K_n$. Then for any Borel $\Os \subseteq \D$ (including $\Os=\D$), $M_n^\gamma(\Os)$ defined by 
	\[ M_n^\gamma(\Os)= \int_{\Os} e^{\gamma h_{A^n}(z)}\CR(z; \D\setminus A^n)^{\gamma^2/2} \, dz \]
	is a martingale with respect to $\{\mathcal{F}_{A^n}\}_{n>0}$ and 
	\[ \lim_{n\to \infty} M_n^\gamma(\Os) = \mu^\gamma(\Os)  \, \text{a.s.}\]
	where $\mu^\gamma$ is the Liouville measure defined by $\Gamma$. Thus, almost surely the measures $M_n^\gamma$ converge weakly to $ \mu^\gamma$.
	\end{propn}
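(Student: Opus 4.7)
The plan is to mirror the proof of Proposition \ref{propn::Ea}, with the constant $a$ replaced by the uniform bound $K_n$ and the constant value $a$ of the harmonic function replaced pointwise by $h_{A^n}(z)$. By Remark \ref{IdWC} it suffices to establish the pointwise almost sure convergence $M_n^\gamma(\Os)\to \mu^\gamma(\Os)$ for any fixed Borelian $\Os\subseteq \D$; both the martingale property in $(\F_{A^n})_n$ and the almost sure weak convergence of the measures then follow. Specifically, the martingale property reduces to the identity
\[ M_n^\gamma(\Os) = \E{\mu^\gamma(\Os)\mid \F_{A^n}} \quad \text{a.s.,} \]
combined with the inclusion $\F_{A^n}\subset \F_{A^{n+1}}$ from Lemma \ref{BPLS}(1), while the limit as $n\to\infty$ is handled by martingale convergence together with Lemma \ref{BPLS}(3), which gives $\bigvee_n \F_{A^n}=\sigma(\Gamma)$ since $\overline{\bigcup_n A^n}=\bar{\D}$.

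To establish the identity, I would start from the $L^1$ convergence $\mu_\eps^\gamma(\Os)\to\mu^\gamma(\Os)$ provided by Theorem \ref{thm::subcritical_mollified}, whence
\[ \E{\mu^\gamma(\Os)\mid \F_{A^n}}=\lim_{\eps\to 0}\E{\mu_\eps^\gamma(\Os)\mid \F_{A^n}}, \]
and split the integration domain as $\Os=(\Os\setminus (A^n)^\eps)\cup(\Os\cap (A^n)^\eps)$, where $(A^n)^\eps$ denotes the $\eps$-enlargement of $A^n$. Writing $\Gamma=\Gamma_{A^n}+\Gamma^{A^n}$, on the far part $\Os\setminus (A^n)^\eps$ the ball $B(z,\eps)$ lies in $\D\setminus A^n$, so the mean value property of the harmonic function $h_{A^n}$ gives $(\Gamma_{A^n},\rho_z^\eps)=h_{A^n}(z)$. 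Applying \eqref{eqn::condmeanGammaA} conditionally to the GFF $\Gamma^{A^n}$ in $\D\setminus A^n$ then yields
\[ \E{\left. \int_{\Os\setminus (A^n)^\eps}\eps^{\gamma^2/2}\e^{\gamma(\Gamma,\rho_z^\eps)}\,dz \right| \F_{A^n}} = \int_{\Os\setminus (A^n)^\eps} \e^{\gamma h_{A^n}(z)}\CR(z,\D\setminus A^n)^{\gamma^2/2}\,dz = M_n^\gamma(\Os\setminus (A^n)^\eps). \]
Since $Leb(A^n)=0$ and $A^n$ is closed, $Leb((A^n)^\eps)\to 0$ as $\eps\to 0$, so this converges to $M_n^\gamma(\Os)$.

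It remains to show that the $\eps$-neighborhood contribution vanishes in the limit, and this is the only step where the hypothesis $\Gamma_{A^n}\leq K_n$ is used (playing exactly the role of $\e^{\gamma a}$ in the first passage set proof). I would use the pointwise bound $(\Gamma_{A^n},\rho_z^\eps)\leq K_n$ together with the observation that, by the law of total variance, the conditional variance of $(\Gamma^{A^n},\rho_z^\eps)$ given $\F_{A^n}$ is at most the unconditional variance of $(\Gamma,\rho_z^\eps)$. This gives
\[ \eps^{\gamma^2/2}\E{\left. \e^{\gamma(\Gamma,\rho_z^\eps)}\right| \F_{A^n}}\leq \e^{\gamma K_n}\CR(z,\D)^{\gamma^2/2}\leq \e^{\gamma K_n}, \]
so the $\eps$-neighborhood contribution is bounded by $Leb((A^n)^\eps)\,\e^{\gamma K_n}\to 0$. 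The main---and essentially only---adaptation compared to Proposition \ref{propn::Ea} is handling the spatially varying factor $\e^{\gamma h_{A^n}(z)}$ in place of the constant $\e^{\gamma a}$; this is precisely what necessitates the uniform upper bound $K_n$ on $\Gamma_{A^n}$, in order to keep the residual $\eps$-neighborhood term uniformly controlled.
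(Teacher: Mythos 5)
Your proof is correct and is essentially the argument the paper has in mind: the paper explicitly states that Proposition \ref{propn::mainresult_withF} follows by "a simple adaptation of the proof above" (of Proposition \ref{propn::Ea}), and you carry out precisely that adaptation — replacing the constant boundary value $a$ by $h_{A^n}(z)$ via the mean value property on the far part, and using the upper bound $K_n$ in place of $a$ to control the $\eps$-neighborhood contribution.
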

Let us mention two natural sequences of local sets for which this proposition applies. The first is when we take $a_n,b_n\nearrow \infty$ and study the sequence $(A_{-a_n,b_n})_{n\in \N}$. The second is when we take the sequence $(A_{-a,b}^n)_{n\in \N}$ for some $a,b>0$, where $A^n_{-a,b}$ is defined by iteration \footnote{We set $A^1_{-a,b}=A_{-a,b}$ and define $A_{-a,b}^n$ by sampling the $A_{-a,b}$ of $\Gamma^{A^{n-1}_{-a,b}}$ inside each connected component of $D\backslash A_{-a,b}^{n-1}$}. Note that in the case where $a=b=2\lambda$, we recover the result described in the introduction for the iterated CLE$_4$.

Observe that whereas our martingale agrees with the one given in \cite{Ai} for the case of first-passage sets, for any cases where $h_{A^n}$ can take more than one value, the martingales are in fact different. Yet, we can still identify the limit of the martingale $\tilde M^\gamma_n(\Os)$ of \cite{Ai}, corresponding to an iterated CLE$_4$ (i.e. $(\text{CLE}_4^n)_{n\in \N}$.) In this case Aidekon's martingale converges in distribution to $\eta^\gamma(\Os) := \E{\mu^\gamma(\Os) | \F_\infty}$, where $\mu^\gamma$ is the Liouville measure and $\F_\infty$ is the $\sigma$-algebra containing only the geometric information from all iterations of the CLE$_4$. This $\sigma$-algebra is strictly smaller than  $\mathcal{F}_{A_{-2\lambda,2\lambda}^n}$, which also contains information on the labels of CLE$_4$ in its coupling with the GFF. It is not hard to see that $\eta^\gamma$ is not equal to $\mu^\gamma$. 

\section{Generalizations}
In this section, we describe some other situations where an equivalent of Proposition \ref{propn::mainresult_withF} can be proven using the same techniques as the proof of Proposition \ref{propn::Ea}. In the following we do not present any new methods, but focus instead on announcing the propositions in context, so that they may be used in other works. We also make explicit the places where the results are already, or may in the future, be used.
\subsection{Non-simply connected domains and general boundary conditions.}
Here we consider the case when $\Gamma$ is a GFF in an n-connected domain $D\subseteq \D$ (for more context see \cite{ALS}). First, let us note that in this set-up \eqref{eqn::condmeanGammaA} becomes
\[
\E{\ \eps^{\frac{\gamma^2}{2}} \exp\left(\gamma (\Gamma,\rho^\eps_z)\right)}\left\{ \begin{array}{l l}
=  e^{-\frac{\gamma^2}{2} \tilde G_{D}(z,z)} & \text{ if } d(z,\partial D)\geq \eps,\\
\leq 1 & \text{ if } d(z,\partial D)< \eps,
\end{array}\right.\]
where we write $G_D(z,w) = -\log |z-w| + \tilde G_D(z,w)$, i.e. for any $z \in D$, $\tilde G_{D}(z,\cdot)$, is the bounded harmonic function that has boundary conditions $\log(|z-w|)$ for $w\in \partial D$. Additionally, if we work with local sets $A$ such that all connected components of $A\cup \partial D$ contain an element of $\partial D$, then Lemma \ref{BPLS} will hold. All local sets we refer to here are assumed to satisfy this condition. These facts and assumptions are  enough to prove the following proposition:
\begin{propn}\label{propn::mainresultgeneral}
	Fix $\gamma\in [0,2)$ and let $(A^n)_{n \in \N}$ be an increasing sequence of local sets for a GFF $\Gamma$ with $\overline{\bigcup_{n \in \N} A^n} = \bar{D}$. Suppose that almost surely for all $n \in \N$, we have that $Leb(A^n)=0$ and that $\Gamma_{A^n}$ is bounded from above by $K_n$ for some sequence of finite $K_n$. Then for any Borel $\Os \subset D$, $M_n^\gamma(\Os)$ defined by 
	\[M_n^\gamma:=\int_{\Os} e^{ \gamma h_{A_n}(z)-\frac{\gamma^2}{2} \tilde G_{D\backslash A_n}(z,z)} \, dz,  \]
	is a martingale with respect to $\{\mathcal{F}_{A^n}\}_{n>0}$ and 
	\[ \lim_{n\to \infty} M_n^\gamma(\Os) = \mu^\gamma(\Os)  \, \text{a.s.}\]
	where $\mu^\gamma$ is the Liouville measure defined by $\Gamma$. Thus, almost surely the measures $M_n^\gamma$ converge weakly to $ \mu^\gamma$. \end{propn}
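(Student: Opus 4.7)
My approach is to transcribe the argument of Propositions \ref{propn::Ea} and \ref{propn::mainresult_withF} almost word-for-word, with the conformal radius factor $\CR(z;\D\setminus A^n)^{\gamma^2/2}$ systematically replaced by the covariance correction $e^{-\frac{\gamma^2}{2}\tilde G_{D\setminus A^n}(z,z)}$. The plan is to establish
\[
M_n^\gamma(\Os) = \E{\mu^\gamma(\Os) \mid \F_{A^n}}
\]
for every Borel $\Os \subset D$; the martingale property with respect to $\{\F_{A^n}\}$ is then automatic, and almost sure convergence to $\mu^\gamma(\Os)$ follows from Doob's theorem together with the fact that $\overline{\bigcup_n A^n} = \bar D$, combined with the hypothesis on the connected components of $A^n\cup\partial D$, implies via Lemma \ref{BPLS} that $\bigvee_n \F_{A^n} = \sigma(\Gamma)$. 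Almost sure weak convergence of the measures is then obtained by the compactness argument recalled in Remark \ref{IdWC}.

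To identify $M_n^\gamma(\Os)$ as a conditional expectation, I would invoke the natural $n$-connected analogue of Theorem \ref{thm::subcritical_mollified}, which gives $\mu_\eps^\gamma(\Os)\to \mu^\gamma(\Os)$ in $\mathcal L^1$, so that
\[
\E{\mu^\gamma(\Os)\mid \F_{A^n}} = \lim_{\eps\to 0}\E{\mu_\eps^\gamma(\Os)\mid \F_{A^n}}.
\]
For each $\eps>0$, split the integral into contributions from $\Os\setminus A^{n,\eps}$ and $\Os\cap A^{n,\eps}$, where $A^{n,\eps}$ denotes the $\eps$-enlargement of $A^n$. On $\Os\setminus A^{n,\eps}$, the Markov decomposition $\Gamma = \Gamma_{A^n} + \Gamma^{A^n}$ gives $(\Gamma_{A^n},\rho_z^\eps)=h_{A^n}(z)$, while conditionally on $\F_{A^n}$, $\Gamma^{A^n}$ is a GFF on $D\setminus A^n$. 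The generalized version of \eqref{eqn::condmeanGammaA} applied inside $D\setminus A^n$ then produces exactly the integrand $e^{\gamma h_{A^n}(z)-\frac{\gamma^2}{2}\tilde G_{D\setminus A^n}(z,z)}$, yielding $M_n^\gamma(\Os\setminus A^{n,\eps})$.

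For the contribution from $\Os\cap A^{n,\eps}$, I would use the upper bound $(\Gamma_{A^n},\rho_z^\eps)\leq K_n$ together with the fact that the conditional variance of $(\Gamma^{A^n},\rho_z^\eps)$ is dominated by that of $(\Gamma,\rho_z^\eps)$; combining with the unconditional bound of \eqref{eqn::condmeanGammaA} (the $\leq 1$ alternative near $\partial D$, and local boundedness of $\tilde G_D(z,z)$ on compact subsets of $D$ away from the boundary) gives a locally integrable, $\eps$-independent majorant for the conditional integrand, multiplied by $e^{\gamma K_n}$. Since $A^n$ is closed with zero Lebesgue measure, $\mathrm{Leb}(\Os\cap A^{n,\eps})\to 0$ as $\eps\to 0$, and this contribution vanishes in the limit.

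The main obstacle is not so much the analytic estimates, which are direct analogues of the simply connected case, but the verification that Lemma \ref{BPLS} and the generalized \eqref{eqn::condmeanGammaA} genuinely carry over to the $n$-connected setting. This is precisely the purpose of the standing assumption that each connected component of $A\cup\partial D$ meets $\partial D$: it guarantees that all complementary components remain simply connected and that $G_{D\setminus A^n}\to G_D$ in the sense required. Once this is in place, each step of the proof of Proposition \ref{propn::Ea} transfers with the single replacement $-\log\CR(z;\,\cdot\,)\rightsquigarrow \tilde G_{\cdot}(z,z)$, and the conclusion follows by the martingale convergence theorem.
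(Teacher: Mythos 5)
Your proposal follows essentially the same route as the paper, which deliberately gives no detailed proof here but simply asserts that ``the proof is a simple adaptation of the proof above.'' You correctly spell out that adaptation: replace $\CR(z;\D\setminus A^n)^{\gamma^2/2}$ by $e^{-\frac{\gamma^2}{2}\tilde G_{D\setminus A^n}(z,z)}$ throughout, identify $M_n^\gamma(\Os)$ as $\E{\mu^\gamma(\Os)\mid\F_{A^n}}$ via the $L^1$-convergence of mollified measures and the split into $\Os\setminus A^{n,\eps}$ and $\Os\cap A^{n,\eps}$, bound the second piece using $h_{A^n}\le K_n$ and $\tilde G_D(z,z)\ge 0$ (so the conditional expectation of $\eps^{\gamma^2/2}e^{\gamma(\Gamma,\rho_z^\eps)}$ is at most $e^{\gamma K_n}$) together with $\mathrm{Leb}(A^{n,\eps})\to 0$, and then conclude via Doob and the compactness argument of Remark~\ref{IdWC}. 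That is the intended argument.

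One substantive inaccuracy in your final paragraph: the standing assumption that each connected component of $A^n\cup\partial D$ meets $\partial D$ does \emph{not} imply that the components of $D\setminus A^n$ are simply connected. Take $D$ an annulus and $A=\emptyset$; the assumption is satisfied vacuously but $D\setminus A=D$ is not simply connected. Nor is simple connectivity needed: the generalized version of \eqref{eqn::condmeanGammaA} holds for arbitrary multiply connected complements, since $\tilde G$ is defined there too. The actual role of the connectivity assumption is to guarantee the Green's function convergence $G_{D\setminus A^n}\to G_{D\setminus\overline{\bigcup A^n}}$ (which, in your setting where $\overline{\bigcup A^n}=\bar D$, means $G_{D\setminus A^n}\downarrow 0$, so that $\bigvee_n\F_{A^n}=\sigma(\Gamma)$); your claim that ``$G_{D\setminus A^n}\to G_D$'' is a slip. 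These points do not invalidate the proof, but the justification for why Lemma~\ref{BPLS} carries over should be phrased in terms of the Green's function limit, not simple connectivity.
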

The equivalent of the sets $A_{-a,b}$ and $A_{a}$ are defined in $n$-connected domains in \cite{ALS} and it is easy to see that their iterated versions satisfy the hypothesis of Proposition \ref{propn::mainresultgeneral}. In particular, the above construction allows the authors in \cite{ALS} to prove that the measure $\Gamma_{A_{a}}$ is a measurable function of $A_{a}$.

\subsection{Dirichlet-Neumann GFF}
In this section we take $\Gamma$ to be a GFF with Dirichlet-Neumann boundary conditions in $\D^+=\D\cap \HH$. That is, $\Gamma$ satisfies \eqref{GFF}, with $G_D$ replaced by $G_{\D^+}$: the Green's function in $\D^+$ with Dirichlet boundary conditions on $\partial \D  $ and Neumann boundary conditions on $[-1,1]$. To be more specific, we set $G_{\D^+}(x,y)=G_{\D}(x,y)+G_{\D}(x,\bar y)$, with $G_{\D}$ as in Section \ref{sectGFF}. Then $G_{\D^+}(x,y)\sim \log(1/|x-y|)$ as $x\to y$ in the interior of $\D^+$ and $G_{\D^+}(x,y)\sim 2\log(1/|x-y|)$ when $y \in (0,1)$.

Let $A$ be a closed subset of $\overline{\D^+}$. Suppose that $\Gamma$ is a Dirichlet-Neumann GFF in $\D^+\backslash A$ with Neumann boundary conditions on $[-1,1] \backslash A$ and Dirichlet boundary conditions on the rest of the boundary. Let $z\in [-1,1]$ and  define $\varrho_z^\eps$ to be the uniform measure on $\partial B(z,\eps)\cap \D^+$. Then, in this set-up \eqref{eqn::condmeanGammaA} becomes 
\begin{equation}\label{eqn::condmeanGammaAB1B2}
\E{ \eps^{\gamma^2/4} \exp\left(\frac{\gamma}{2} (\Gamma,\varrho^\eps_x)\right)}\left\{ \begin{array}{l l}
= \CR(x;\D\backslash \breve{A})^{\gamma^2/4} & \text{ if } d(z,\partial (\D\backslash \breve{A}))\geq \eps,\\
\leq 1& \text{ if } d(z,\partial (\D\backslash \breve{A}))< \eps.
\end{array}\right.
\end{equation}
Here we set $\breve A:= A\cup \bar{A}$ for $\bar A=\{z \in \C: \bar z \in A\}$.

There is also a notion of local sets for this Dirichlet-Neumann GFF. We say that $(\Gamma,A,\Gamma_A) $ describes a local set coupling if, conditionally on $(A,\Gamma_A)$, $\Gamma^A:=\Gamma-\Gamma_A$ is a GFF with Neumman boundary conditions on $[-1,1]\backslash A$ and Dirichlet on the rest. For connected local sets such $\partial \D^+\cup A$ is connected, Lemma \ref{BPLS} still holds (by the same proof given for the 0-boundary GFF).

We are interested in the boundary Liouville measure on $[-1,1]$. Take $\gamma<2$, $\eps >0$ and a Borel set $\Os \subseteq [-1,1]$. We define the approximate boundary Liouville measures as follows:
\[
\upsilon^\gamma_\eps(\Os):=\eps^{\gamma^2/4}\int_{\Os}\exp\left (\frac{\gamma}{2}(\Gamma,\varrho_x^\eps) \right )dx,\]
 	where here $dx$ is the Lebesgue density on $[-1,1]$.
It is known (see \cite{DS, Ber}) that $\upsilon^\gamma_\eps \to \upsilon^\gamma$ in $\mathcal L^1$ as $\eps \to 0$. Moreover, it is also easy to see that $\upsilon^\gamma$ is a measurable function of $\F_{[-1,1]}$ - this just comes from the fact that the Dirichlet GFF contains no information on the boundary. Thus, we have all the necessary conditions to deduce the following Proposition using exactly the same proof as in Section \ref{section::subcritical_localsets}.

\begin{propn}\label{propn::mainresultDN}
	Fix $\gamma\in [0,2)$ and let $(A^n)_{n \in \N}$ be an increasing sequence of local sets for a GFF $\Gamma$ with $\overline{\bigcup_{n \in \N} A^n} \supseteq [-1,1]$. Suppose that almost surely for all $n \in \N$, we have that $Leb_{[-1,1]}(A_n)=0$ and that $\Gamma_{A^n}$ restricted to $A^n$ is bounded from above by $K_n$ for some sequence of finite $K_n$. Then for any Borel $\Os \subseteq [-1,1]$, $M_n^\gamma(\Os)$ defined by 
	\[M^\gamma_n(\Os):=\int_{\Os} e^{ \frac{\gamma}{2} h_{A_n}(z)}\CR(z;\D\backslash \breve A_n)^{\frac{\gamma^2}{4}} \, dz \]
	is a martingale with respect to $\{\mathcal{F}_{A^n}\}_{n>0}$ and 
	\[ \lim_{n\to \infty} M_n^\gamma(\Os) = \upsilon^\gamma(\Os)  \, \text{a.s.}\]
	where where $\mu^\gamma$ is the boundary Liouville measure defined by $\Gamma$. Thus, almost surely the measures $M_a^\gamma$ converge weakly to $ \upsilon^\gamma$.
	 \end{propn}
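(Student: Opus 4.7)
The plan is to transcribe the argument of Proposition \ref{propn::Ea} (or equivalently Proposition \ref{propn::mainresult_withF}) into the Dirichlet--Neumann setting, using \eqref{eqn::condmeanGammaAB1B2} in place of \eqref{eqn::condmeanGammaA}. The key identity I would establish is that for any Borel $\Os \subseteq [-1,1]$,
\begin{equation*}
M_n^\gamma(\Os) = \E{\upsilon^\gamma(\Os) \mid \F_{A^n}}.
\end{equation*}
This gives the martingale property immediately. Using that $\upsilon^\gamma$ is measurable with respect to $\F_{[-1,1]}$ together with the assumption $\overline{\bigcup_n A^n} \supseteq [-1,1]$, one then checks that $\upsilon^\gamma$ is measurable with respect to $\bigvee_n \F_{A^n}$, so that the martingale convergence theorem identifies the almost sure limit as $\upsilon^\gamma(\Os)$. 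The weak convergence statement follows by the standard subsequential compactness argument of Remark \ref{IdWC}, applied now to a countable collection of dyadic intervals in $[-1,1]$ together with the convergence of the total mass $M_n^\gamma([-1,1])$.

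To establish the key identity I would pass to the $\mathcal{L}^1$ limit $\upsilon^\gamma_\eps(\Os) \to \upsilon^\gamma(\Os)$ and split the definition of $\upsilon^\gamma_\eps$ over $\Os \setminus (\breve A^n)^\eps$ and $\Os \cap (\breve A^n)^\eps$, where $(\breve A^n)^\eps$ denotes the Euclidean $\eps$-enlargement of $\breve A^n$. On the bulk part, the Markov decomposition $\Gamma = \Gamma_{A^n} + \Gamma^{A^n}$ yields $(\Gamma_{A^n}, \varrho_z^\eps) = h_{A^n}(z)$, since for $z \in [-1,1] \setminus (\breve A^n)^\eps$ the semicircle $\partial B(z,\eps) \cap \D^+$ stays in $\D^+ \setminus A^n$ (the symmetrization $\breve A^n$ is used precisely so that avoiding $(\breve A^n)^\eps$ is equivalent to the semicircle missing $A^n$). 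Applying \eqref{eqn::condmeanGammaAB1B2} to the conditional DN-GFF $\Gamma^{A^n}$ in $\D^+ \setminus A^n$ then rebuilds exactly the integrand of $M_n^\gamma$. On the bad part, the uniform bound $\Gamma_{A^n}|_{A^n} \leq K_n$ combined with a Gaussian moment bound analogous to \eqref{eqn::condmeanGammaAB1B2} for the unconditional field gives an upper bound of order $e^{\gamma K_n/2} \cdot Leb_{[-1,1]}(\Os \cap (\breve A^n)^\eps)$, which tends to zero as $\eps \to 0$ because $Leb_{[-1,1]}(A^n) = 0$.

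The main obstacle I anticipate is technical rather than conceptual: one must verify that the DN-analogue of Lemma \ref{BPLS} provides sufficient control of $\bigvee_n \F_{A^n}$ under the weaker assumption $\overline{\bigcup_n A^n} \supseteq [-1,1]$ (as opposed to equal to $\overline{\D^+}$), so that $\upsilon^\gamma$ is indeed recovered in the limit. The excerpt asserts that Lemma \ref{BPLS} holds for connected DN local sets with $A \cup \partial \D^+$ connected, and the fact that $\upsilon^\gamma$ depends only on the boundary behaviour of $\Gamma$ near $[-1,1]$ (i.e., on $\F_{[-1,1]}$) should close this gap. Granted this, the remainder is a verbatim adaptation of Section \ref{section::subcritical_localsets}; the only arithmetic change is the factors of $\gamma/2$ and $\gamma^2/4$ in place of $\gamma$ and $\gamma^2/2$, which track directly from \eqref{eqn::condmeanGammaAB1B2} and the definition of $\upsilon^\gamma_\eps$.
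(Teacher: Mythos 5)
Your proposal is correct and follows essentially the same route the paper indicates (the paper itself does not spell out the argument, noting only that the proof of Section~\ref{section::subcritical_localsets} carries over verbatim once one observes that $\upsilon^\gamma$ is $\F_{[-1,1]}$-measurable, which is exactly the point you identify as closing the $\sigma$-algebra gap arising from $\overline{\bigcup_n A^n}\supseteq[-1,1]$ rather than $=\overline{\D^+}$). The one loose remark is your parenthetical on the role of $\breve A^n$: for $z\in[-1,1]$ real, avoiding $(\breve A^n)^\eps$ is automatically equivalent to avoiding $(A^n)^\eps$, so the symmetrization is not needed for the bulk/bad splitting --- its actual role is that $\CR(z;\D\backslash\breve A^n)$ encodes the Dirichlet--Neumann Green's function singularity in \eqref{eqn::condmeanGammaAB1B2}, and it enters the mean-value step via Schwarz reflection of $h_{A^n}$ across $[-1,1]\setminus A^n$.
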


 It has recently been proven in \cite{QW} that sets satisfying the above hypothesis do exist, and that they can be used to couple the Dirichlet GFF with the Neumman GFF. Let us describe some concrete examples of these sets. If $\Gamma$ is a Dirichlet-Neumman GFF, then in \cite{QW} it is shown that there exists a (measurable) thin local set $\tilde A(\Gamma)$ of the GFF such that: 
 \begin{itemize}
 \item $\tilde A(\Gamma)$ has the law of the trace of an SLE$_4(0;-1)$ going from $-1$ to $1$
 \item $h_{\tilde A(\Gamma)}$ is equal to $0$ in the only connected component of $\D ^+\backslash \tilde A(\Gamma)$ whose boundary intersects $\partial \D \cap \HH$ 
 \item in the other connected components, $h_{\tilde A(\Gamma)}$ is equal to $\pm 2\lambda$, where conditionally on $\tilde A(\Gamma)$ the sign is chosen independently in each component.
\end{itemize}

There are two interesting sequences of local sets we can construct using this basic building-block. The first one is the boundary equivalent of $(A_{-2\lambda,2\lambda}^n)_{n\in \N}$, and the second is the boundary equivalent of $(A_{2\lambda n})_{n\in \N}$. 
The first one is also described in \cite[Section 3]{QW}. The construction goes as follows: choose $A^1=\tilde A(\Gamma)$ and construct $A^n$ by induction. In the connected components $O$ of $\D\backslash A^n$ that contain an interval of $\R$, we have that $\Gamma^{A_n}$ restricted to $O$ is a Dirichlet-Neumman GFF (with Neumann boundary condition on $\R\cap \partial O$). Thus, by conformal invariance we can explore the set $\tilde A(\Gamma\mid_O)$ in each such component $O$. We define $A^{n+1}$ to be the closed union of $A^{n}$ with $\tilde A(\Gamma\mid_O)$ over all explored components $O$. Note that $h_{A^{n}}\in \{2\lambda k\}$ where $k$ ranges between $-n$ and $n$. It is also not hard to see that $A^n$ is thin (it follows from the fact that $h_A\in \mathcal{L}^1(\D\backslash A)$ and that for any compact set $K\subseteq \D^+$ the Minkowski dimension of $A^n\cap K$ is a.s. equal to $3/2$, see e.g. \cite[Proposition 4.3]{Se}). Thus we deduce that $\Gamma_{A^n}\leq 2\lambda n$. Additionally, note that by adjusting \cite[Lemma 6.4]{MS}, we 
obtain from the construction of $A^1$ that for any $z\in (-1,1)$ the law of $2(\log(\CR^{-1}(z,\D\backslash \breve A^1))-\log(\CR^{-1}(z,\D))$ is equal to the first time that a BM exits $[-2\lambda,2\lambda]$. It follows that for all $n\in \N$,  $Leb_{\R}(A_n \cap [-1,1]) = 0$ and also $\overline{\bigcup_{n \in \N} A^n} \supseteq [-1,1]$. Hence we see that the sequence $(A^n)_{n\in \N}$ satisfies the conditions of Proposition \ref{propn::mainresultDN}.

For the second sequence of local sets, take $B^1=\tilde A(\Gamma)$ and define $B^{n+1}$ to be the closed union of $B^n$ with all $\tilde A(\Gamma\mid_O)$ such that $O$ is a connected component of $\D\backslash B^n$, $h_{B^n}\mid _O\leq 2 \lambda$ and $\partial O$ contains an interval of $\R$. Denote $A^1(\Gamma)$ the closed union of all the $B^n$. Due to the fact that $B^n$ are BTLS with $h_{B^n}\leq 2\lambda$ on $[-1,1]$, we have that $\Gamma_{A^1}$ restricted to $[-1,1]$ is smaller than or equal to $ 2\lambda$. Additionally, note that $2(\log(\CR^{-1}(z,\D\backslash \breve A^1))-\log(\CR^{-1}(z,\D))$ is distributed as the first time a BM hits $2\lambda$. Now, we iterate to define $A^n(\Gamma)$ as the closed union of $A^{n-1}(\Gamma)$ and $A^1(\Gamma\mid _O)$, where $O$ ranges over all connected components of $\D^+\backslash A^{(n-1)}$ containing an interval of $\R$. The sequence $(A^n)_{n\in \N}$ satisfies the condition of Proposition \ref{propn::mainresultDN}. Note that in this case the martingale simplifies and contains only information on the geometry of the sets $A^n$:
\[
M^\gamma_n:= e^{\gamma 2\lambda n}\int_{\Os} CR(z;\D\backslash \breve A^n)^{\gamma^2/4}dz.
\]
The fact that this martingale is a measurable function of $A^n$ allows us to use the same techniques as in \cite{ALS} to prove that the measure $2\lambda n-\Gamma_{A^n}$ on $\R$ is a measurable function of $A^n$.

It is also explained in \cite{QW} that the sets $A^n$ we have just constructed, and the definition of the boundary Liouville measure using them, might help to reinterpret an SLE-type of conformal welding first studied in \cite{ShZ}.

\section{Critical and supercritical regimes}

In this section it is technically simpler to restrict ourselves to the simply connected case and to study a special family of sequences of local sets. Namely, we assume that our sets $A^n$ are formed by iterating a first passage set $A_a$ for some $a>0$, in other words $A^n = A_{an}$. With some extra work, the results can be seen to hold in a more general setting.

We first show that the martingales defined in Section \ref{section::subcritical_localsets} converge to zero for $\gamma \geq 2$. Then, in the critical case $\gamma = 2$, we define a derivative martingale and show it converges to the same measure as the critical measure $\mu_2'$ from \cite{DSRV,DSRV2,EP}, and $1/2$ times the critical measure $\tilde{D}_\infty$ from \cite{Ai}. Finally, we show that we can also construct the critical measure using the Seneta-Heyde rescaling (analogous to the main theorem of \cite{AiSh}.) More precisely, for all Borelian $\Os \subseteq \D$, we have that $\sqrt{an}M^2(\Os, A^n)$ converges in probability to $\frac{2}{\sqrt{\pi}}\mu_2'(\Os)$ as $n\to \infty$.
\subsection{The martingale $M_n^\gamma$ vanishes in the (super)critical regime.}\label{sec:crz}
\begin{lemma}\label{lem::criticaltozero}
	Set $\gamma \geq 2$ and $A^n=A_{an}$ as above. 
Then  $M_n^\gamma\to 0$	almost surely.
\end{lemma}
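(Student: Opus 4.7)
Since $(M_n^\gamma(\Os))_n$ is a non-negative $(\mathcal F_{A^n})_n$-martingale by Proposition~\ref{propn::mainresult_withF}, it converges almost surely to some limit $M_\infty^\gamma(\Os)\ge 0$. My plan is to prove $M_\infty^\gamma(\Os)=0$ a.s.\ by viewing $M_n^\gamma(\Os)$ as the continuous analogue of the additive martingale of a branching random walk and applying the classical spine/size-biased analysis at criticality (cf.~\cite{AiSh, Ai}). Concretely, write the integrand as
\[
W_n(z):=e^{\gamma a n}\,\CR(z;\D\setminus A^n)^{\gamma^2/2}=\CR(z;\D)^{\gamma^2/2}\exp\!\bigl(\gamma a n-\tfrac{\gamma^2}{2}\,S_n(z)\bigr),
\]
where $S_n(z):=\log(\CR(z;\D)/\CR(z;\D\setminus A^n))$. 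By iterating the hitting-time characterisation of Proposition~\ref{twdesc2}(5) together with the Markov property of the GFF, for each fixed $z\in\D$ the sequence $(S_n(z))_n$ is a sum of $n$ i.i.d.\ copies of $T_a$, the first hitting time of level $a$ by a standard Brownian motion. The identity $\E{e^{-\lambda T_a}}=e^{-a\sqrt{2\lambda}}$ confirms the martingale property of $W_n(z)$, and the exponential tilt of $T_a$ by $e^{-\gamma^2 t/2}$ has finite mean $a/\gamma$.

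Introduce the size-biased probability $\mathbb Q$ on $\Omega\times\Os$ by
\[
d\mathbb Q(\omega,z)=\frac{W_n(z,\omega)}{C}\,dz\,d\P(\omega),\qquad C:=\int_\Os\CR(z;\D)^{\gamma^2/2}\,dz,
\]
consistent in $n$ by the martingale property. Under $\mathbb Q$ the marginal law of $z$ is $C^{-1}\CR(z;\D)^{\gamma^2/2}\,dz$, and conditional on $z$ the walk $(S_n(z))_n$ has i.i.d.\ increments distributed as the tilted $T_a$ (with mean $a/\gamma$). The strong law of large numbers gives
\[
\tfrac{1}{n}\log W_n(z)\longrightarrow \gamma a-\tfrac{\gamma^2}{2}\cdot\tfrac{a}{\gamma}=\tfrac{\gamma a}{2}\ge a>0\qquad \mathbb Q\text{-a.s.,}
\]
so $W_n(z)\to\infty$ $\mathbb Q$-a.s. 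A Koebe-$1/4$ estimate, namely that the area of the component $L_n(z)$ of $\D\setminus A^n$ containing the spine point satisfies $|L_n(z)|\gtrsim\CR(z;\D\setminus A^n)^2$, produces the lower bound
\[
M_n^\gamma(\Os)\ \gtrsim\ e^{\gamma a n}\,\CR(z;\D\setminus A^n)^{2+\gamma^2/2}\qquad \mathbb Q\text{-a.s.,}
\]
whose logarithm divided by $n$ tends to $a(\gamma^2-4)/(2\gamma)\ge 0$. For $\gamma>2$ this rate is strictly positive, so $M_n^\gamma(\Os)\to\infty$ $\mathbb Q$-a.s., and by the standard Durrett--Lyons size-biased criterion (if a non-negative $\P$-martingale $X_n$ tends to $\infty$ under its size-biased measure, then $X_\infty=0$ $\P$-a.s.), this forces $M_\infty^\gamma(\Os)=0$.

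The main obstacle is the strictly critical case $\gamma=2$: there the rate $a(\gamma^2-4)/(2\gamma)$ vanishes, so the Koebe-based lower bound only keeps $\log M_n^\gamma(\Os)$ bounded on the leading order under $\mathbb Q$, with $\sqrt n$ Gaussian fluctuations dominating. To handle this regime I would follow the refined argument of \cite{AiSh}: because the spine walk under $\mathbb Q$ is centred at $\gamma=2$, an iterated-logarithm type estimate combined with the conditional independence across generations provided by the Markov property still gives $\log M_n^\gamma(\Os)\to+\infty$ $\mathbb Q$-a.s., which is enough to conclude. Alternatively, one can bypass Durrett--Lyons via a direct fractional moment estimate $\E{M_n^\gamma(\Os)^p}\to 0$ for some $p\in(0,1)$ using the same size-biased decomposition. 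Finally, the almost sure weak convergence of $M_n^\gamma$ to the zero measure then follows from the convergence of the total mass $M_n^\gamma(\D)\to 0$ in view of Remark~\ref{IdWC}.
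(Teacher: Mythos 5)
Your strategy is the same one the paper uses: size-bias via the martingale, analyze the spine random walk, and apply the Durrett--Lyons criterion after a Koebe lower bound. For $\gamma > 2$ the argument is correct. The gap is in the critical case $\gamma = 2$, where you claim ``$\log M_n^\gamma(\Os)\to+\infty$ $\mathbb{Q}$-a.s.'' Your Koebe lower bound gives $\log M_n^\gamma \ge \gamma a n - (2+\gamma^2/2)T_n(Z) + O(1)$, where $T_n(Z):=\log\CR^{-1}(Z,\D\setminus A^n)$, and you correctly show that under $\mathbb{Q}(\cdot\mid Z)$ the walk $T_n(Z)$ has increments with mean $a/\gamma$. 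At $\gamma=2$ the linear drift of the exponent vanishes and your bound becomes $-4\bigl(T_n(Z)-an/2\bigr)+O(1)$, i.e.\ (minus a constant times) a centred finite-variance random walk. Such a walk has $\limsup=+\infty$ \emph{and} $\liminf=-\infty$, so the lower bound oscillates; the iterated-logarithm estimate you invoke only quantifies the size of both deviations and does not yield convergence to $+\infty$. Hence the version of the Durrett--Lyons criterion you state (``if $X_n\to\infty$ under the size-biased measure\ldots'') cannot be applied as written.

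This is not fatal: the criterion as the paper cites it (Durrett, Theorem 5.3.3) only requires $\limsup_n X_n = +\infty$ under the size-biased measure, which is exactly what the oscillating lower bound does give, by recurrence of a centred random walk --- no iterated-logarithm refinement is needed. Once you replace ``tends to $\infty$'' by ``has $\limsup=\infty$'', your argument closes the critical case and coincides with the paper's proof. The remaining differences are cosmetic: you compute the tilted mean $a/\gamma$ from the explicit Laplace transform of $T_a$, while the paper derives the centring of the spine walk by differentiating under the expectation; and you should take $\Os=\D$ (as the paper does) so that the Koebe disk around the spine point is guaranteed to lie inside the region of integration.
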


{\begin{remark0}\label{rmk::iterated_0}
	In fact, our proof of Lemma \ref{lem::criticaltozero} works for any sequence $A^n$ of local sets such that $\overline{\bigcup_n A_n}=\bar{\D}$, and that are formed by iteration. That is, $A^1=A(\Gamma)$ is some measurable local set coupled with the GFF $\Gamma$, and $A^{n+1}$ is formed from $A^n$ by, in each component $O$ of $\D\setminus A^n$, exploring $A(\Gamma^{A^n})$
\end{remark0}


In \cite{Ai}, A\"{i}dekon also considers the critical and supercritical cases for his iterated loop measures. In particular, from his results one can obtain Lemma \ref{lem::criticaltozero} directly. We include a proof (that works in the more general setting of Remark \ref{rmk::iterated_0}) for completeness, and to introduce a change of measure technique that will be crucial in later arguments. The proof follows from a classical argument, stemming from the literature on branching random walks \cite{Lyons}, but is based on the local set coupling with the GFF.
\\

\begin{proof} 
	From \eqref{eqn::condmeanGammaA} and the iterative way that we have constructed $A^n$, we see that if $M_0^\gamma(\D)=\int_\D\CR(z,\D)^{\gamma^2/2} \, dz$, then $M_n^\gamma(\D)/M_0^\gamma(\D)$ is a mean one martingale. Let us define a new probability measure $\hat{\mathbb{P}}$ via the change of measure 
	\begin{equation}\label{com} \left.\frac{d\hat{\mathbb{P}}}{d\mathbb{P}}\right|_{\F_{A^n}} = \frac{M_n^\gamma(\D)}{M_0^\gamma(\D)}.\end{equation}
	It is well known, see for example \cite[Theorem 5.3.3]{Dur}, that in order to show that $M_n^\gamma(\D)\to 0$ almost surely under $\mathbb{P}$, it suffices to prove that $\limsup_n M_n^\gamma(\D)=+\infty$ a.s. under $\hat{\mathbb{P}}$. 
	
	To show this we actually consider a change of measure on an enlarged probability space. Define a measure $\mathbb{P}^*$ on $(\Gamma, (A^n)_n, Z)$ by sampling $(\Gamma, (A^n)_n)$ from $\mathbb{P}$ and then independently, sampling a random variable $Z\in \D$ with law proportional to Lebesgue measure. Note that under $\mathbb{P}^*$ the process 
	\[ \xi_n =  \e^{\gamma h_{A^n}(Z)-\gamma^2/2 \log \CR^{-1}(Z,\D\setminus A^n)}\]
	is a martingale with respect to the filtration $\F_{A^n}^*=\F_{A^n}\vee \sigma(Z)$. Thus we can define a new probability measure $\hat{\mathbb{P}}^*$ by 
	\begin{equation}
	\label{Pstar}\left. \frac{d\hat{\mathbb{P}}^*}{d\mathbb{P}^*}\right|_{\F_{A^n}^*}:=\frac{\xi_n}{\mathbb{E}^*[\xi_0]}
	\end{equation}
	Then if $\hat{\mathbb{P}}$ is the restriction of $\hat{\mathbb{P}}^*$ to $\F_{A^n}$, $\hat{\mathbb{P}}$ and $\mathbb{P}$ satisfy (\ref{com}). Therefore it suffices to prove that under $\hat{\mathbb{P}}^*$ and conditionally on $Z$, we have $\limsup_n M_n^\gamma(\D)=+\infty$ almost surely. 
	
	Now, for any simply-connected domain $D$ we have $d(z,\partial D) \leq \CR(z,D) \leq 4d(z, \partial D)$ by K\"{o}ebe's quarter theorem. Using the triangle inequality we therefore see that for any $z'$ with $d(z,z') \leq \CR(z,D)/2$, we have that $\CR(z,D) \leq 16\CR(z',D)$. Clearly, we can lower bound $M_n^\gamma(\D)$ by the $M_n^\gamma-$ mass in the disk of radius $\CR(Z,D)/2$ around $Z$. Thus by the above comments, it suffices to prove that under $\hat{\mathbb{P}}^*$ and conditionally on $Z$, almost surely 
	\begin{equation}\label{eq:rwinfty}
	\limsup_n \e^{\gamma h_{A^n}(Z)-(\gamma^2/2+2)\log \CR^{-1}(Z,\D\setminus A^n)}=+\infty.
	\end{equation}
	To do this, we claim that under the conditional law $\hat{ \mathbb P}^*(\cdot | Z)$
	\[ h_{A^n}(Z)-\gamma\log \CR^{-1}(Z,\D\setminus A^n )\]
	is a random walk with mean zero increments (starting from $-\gamma \log \CR^{-1}(Z,\D\setminus A^n$). Notice that \eqref{eq:rwinfty} then follows, as $\gamma^2 \ge \gamma^2/2 + 2$ if $\gamma\geq 2$.
	
	To prove the claim, observe that the marginal law of $Z$ under $\hat{ \mathbb P}^*$ is proportional to $\CR(z,\D)^{\gamma^2/2}$. Moreover, the conditional law on the field can be written as \[\hat{ \mathbb P}^*(d\Gamma | Z) = \e^{\gamma h_{A^n}(Z)-\frac{\gamma^2}{2}(\log \CR^{-1}(Z,\D\setminus A^n)-\log \CR^{-1}(Z,\D))} \mathbb{P}(d\Gamma).\] As for all $\gamma$ we have that \[\E{\e^{\gamma h_{A^n}(Z)-\frac{\gamma^2}{2}(\log \CR^{-1}(Z,\D\setminus A^n)-\log \CR^{-1}(Z,\D))}}=1,\]
	by dominated convergence we can differentiate with respect to $\gamma$ to obtain that
	\[\E{(h_{A^n}(Z)-\gamma\log \CR^{-1}(Z,\D\setminus A^n))\e^{\gamma h_{A^n}(Z)-\frac{\gamma^2}{2}(\log \CR^{-1}(Z,\D\setminus A^n)-\log \CR^{-1}(Z,\D))}}=0.\]
	This says precisely that $h_{A^n}(Z)-\gamma\log \CR^{-1}(Z,\D\setminus A^n)$ is a zero mean random walk under $\hat{ \mathbb P}^*(\cdot | Z)$.
	
\end{proof}

\begin{remark0}\label{rmk::variance}
	Using the same technique but instead differentiating twice with respect to $\gamma$, we can also calculate the variance of the increments of $h_{A^n}(Z)-\gamma\log \CR^{-1}(Z,\D\setminus A^n)$ under $\hat{ \mathbb P}^*(\cdot | Z)$. In the case $\gamma=2$ the variance is equal to $1/2$.
\end{remark0}

\subsection{The derivative martingale in the critical regime}\label{sec::derivativemgale}

We now show the convergence of the derivative martingale (when $\gamma=2$, defined below) that is built from the sets $A^n=A_{an}$ for $a>0$. For any Borel set $\Os\subseteq \D$ and local set $A$, we define
\[ D^\gamma(\Os,A):= \int_{\Os} \left(-h_{A}(z) + \gamma \log \CR^{-1}(z,\D\setminus A)\right)e^{\gamma h_{A}(z)}\CR(z; \D\setminus A)^{\gamma^2/2}\, dz.\]
The rest of this section is devoted to proving the following proposition.

\begin{propn}\label{propn::derivative}
Assume that $A^n$ is given by $A_{an}$ for some $a > 0$. Then for any Borel $\Os \subset \D$ (including $\Os=\D$) we have that $ D^2(\Os, A^n)$ is a martingale and converges almost surely to a finite, positive limit $ D_\infty(\Os)$ as $n\to \infty$. In particular the signed measures $ D^2(\Os, A^n)$ converge weakly to a limiting measure that is independent of the choice of $a>0$ and agrees with the critical measure $\mu_2'$ defined in \cite{DSRV,DSRV2}, and $1/2$ times the critical measure $\tilde D_\infty$ defined in Theorem 1.3 of \cite{Ai}. In particular we confirm that $\tilde D_\infty = 2\mu_2'$. 
\end{propn}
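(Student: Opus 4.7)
The plan is to combine branching-random-walk techniques (based on the spine change of measure introduced in Section \ref{sec:crz}) with a direct circle-average computation to identify the limit with $\mu_2'$. First, for the martingale property: since $h_{A^n}\equiv an$ on $\D\setminus A^n$, the only randomness in the integrand of $D^2(\Os,A^n)$ is through $U_n(z):=\log\CR^{-1}(z,\D\setminus A^n)$, and conditionally on $\F_{A^n}$ the increment $U_{n+1}(z)-U_n(z)$ has the law of the hitting time $\tau_a$ of $a$ by a standard Brownian motion started at $0$. The identity $\E{D^2(\Os,A^{n+1})\mid \F_{A^n}}=D^2(\Os,A^n)$ then boils down to the two Brownian-motion identities $e^{2a}\E{e^{-2\tau_a}}=1$ (optional stopping applied to $e^{2B_t-2t}$) and $e^{2a}\E{\tau_a e^{-2\tau_a}}=a/2$ (Girsanov: under $d\mathbb{Q}/d\mathbb{P}|_{\mathcal{F}_t}=e^{2B_t-2t}$ the process $B$ has drift $+2$, so $\E^{\mathbb{Q}}[\tau_a]=a/2$).

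Next, for almost sure convergence to a non-negative limit $D_\infty(\Os)\ge 0$, I would follow the standard truncation argument from derivative-martingale theory for branching random walks. Under the spine measure $\hat{\mathbb{P}}^*$ of \eqref{Pstar} with $\gamma=2$, the process $S_n:=2\log\CR^{-1}(Z,\D\setminus A^n)-h_{A^n}(Z)$ is, conditionally on $Z$, a centered random walk with increment variance $1/2$ (Remark \ref{rmk::variance}), and $D^2(\Os,A^n)$ can be written as a spine integral weighted by $S_n$. Following \cite{AiSh}, introduce the truncated quantities
\[D_n^{(r)}(\Os):=\int_\Os \bigl(S_n(z)+r\bigr)\,\I{\min_{k\le n}S_k(z)\ge -r}\, e^{2an}\CR(z,\D\setminus A^n)^2\,dz,\]
check via the spine decomposition that each $D_n^{(r)}$ is a non-negative supermartingale (hence converges a.s.\ to a limit $D_\infty^{(r)}$), and deduce that $D^2(\Os,A^n)\to D_\infty(\Os):=\lim_{r\to\infty}D_\infty^{(r)}$ almost surely. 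Strict positivity of $D_\infty(\Os)$ for $\Os$ of positive Lebesgue measure follows from classical ballot estimates for random walks conditioned to stay above $-r$.

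For the identification, a Gaussian computation completely analogous to the proof of Proposition \ref{propn::Ea}, using $h_{A^n}\equiv an$ on $\D\setminus A^n$ and the fact that $(\Gamma^{A^n},\rho_z^\eps)$ is a centered Gaussian of variance $\log\CR(z,\D\setminus A^n)-\log\eps$ when $d(z,A^n)\ge\eps$, yields the key identity
\[\E{\nu_\eps(\Os\setminus A_\eps^n)\mid \F_{A^n}}=D^2(\Os\setminus A_\eps^n,A^n),\]
where $A_\eps^n$ is the $\eps$-neighbourhood of $A^n$; using $\mathrm{Leb}(A^n)=0$, the right-hand side converges to $D^2(\Os,A^n)$ as $\eps\to 0$. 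Combined with $\nu_\eps(\Os)\to\mu_2'(\Os)$ in probability from \cite[Theorem 1.1]{EP} (and a truncated $L^1$ argument, see below), this gives $D^2(\Os,A^n)=\E{\mu_2'(\Os)\mid \F_{A^n}}$. L\'evy's upward theorem, together with $\bigvee_n \F_{A^n}=\sigma(\Gamma)$ from Lemma \ref{BPLS}, then yields $D_\infty(\Os)=\mu_2'(\Os)$ a.s., which incidentally shows that $D_\infty$ does not depend on $a$. For the special choice $a=2\lambda$ the martingale $D^2(\Os,A^n)$ coincides with $\tilde D_n(\Os)/2$ from \cite{Ai}, so $\tilde D_\infty=2\mu_2'$.

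The main technical obstacle is the passage from $\nu_\eps\to\mu_2'$ in probability to the conditional identity $D^2(\Os,A^n)=\E{\mu_2'(\Os)\mid \F_{A^n}}$: since $\nu_\eps$ is a signed measure and the critical GMC sits on the boundary of the $L^1$ regime, one cannot naively exchange limit and conditional expectation. I would circumvent this by working with a common truncation parameter $r$ for both $\nu_\eps^{(r)}$ and $D_n^{(r)}$ (corresponding to the event that the associated spine walk stays above $-r$), verifying that both truncated versions converge in $L^1$ to a single (truncated) limit by ballot-type uniform integrability bounds, and only afterwards sending $r\to\infty$ to conclude.
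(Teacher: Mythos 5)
Your verification of the martingale property is correct, and your plan to establish almost sure convergence via truncated supermartingales $D_n^{(r)}$ (following the Biggins--Kyprianou/A\"idekon template, combined with \eqref{eqn::unifmin} to let $r\to\infty$) is a viable, if somewhat more laborious, alternative to the paper's route, which simply cites Theorem 1.3 of \cite{Ai} and then upgrades to general $a>0$ via Doob's maximal inequality. The Gaussian computation giving $\E{\nu_\eps(\Os\setminus A_\eps^n)\mid\F_{A^n}}=D^2(\Os\setminus A_\eps^n,A^n)$ is also correct and plays the same role as the term $E^1$ in the paper's decomposition.

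However, the identification step contains a genuine error: the identity $D^2(\Os,A^n)=\E{\mu_2'(\Os)\mid\F_{A^n}}$ is \emph{false}, and no truncation scheme can rescue it. The critical GMC measure has infinite first moment, $\E{\mu_2'(\Os)}=+\infty$ for $\Os$ of positive Lebesgue measure, whereas $\E{D^2(\Os,A^n)}=D^2(\Os,A^0)=\int_\Os 2\log\CR^{-1}(z,\D)\,\CR(z,\D)^2\,dz$ is finite; taking expectations on both sides of your identity gives an immediate contradiction. This is precisely why the subcritical template of Proposition \ref{propn::Ea} (compute $\E{\mu^\gamma\mid\F_{A^n}}$, then apply L\'evy's upward theorem) breaks down at $\gamma=2$, and why the derivative martingale $D^2(\Os,A^n)$ is not uniformly integrable. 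You flag the exchange of limit and conditional expectation as the main obstacle and propose a ``truncated $L^1$'' fix, but the goal of that fix is still the impossible identity above. What the paper does instead is to never condition $\mu_2'$ itself: it works throughout with the three-index object $\E{\nu_\eps^\beta(\Os)\mid\F_{A_n}}\I{\tau_\beta=\infty}$ and shows that the iterated limit $\lim_\beta\lim_n\lim_\eps$ of this quantity equals $\mu_2'(\Os)$ on one hand (via Fatou/reverse Fatou applied to the uniformly integrable $\nu_\eps^\beta$, together with $\nu_\eps^\beta=\nu_\eps$ on $C_\beta$ and $\1_{C_\beta}\uparrow 1$) and equals $\frac12\tilde D_\infty(\Os)$ on the other (via the explicit decomposition into $E^1,E^2,E^3$, with $E^2,E^3$ shown negligible and $E^1$ differing from $D_n/2$ by terms controlled by $M_n^2$ and $\P^{A_n}(C_\beta^c)$). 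The truncation parameter $\beta$ must remain in place until \emph{after} the $n$ and $\eps$ limits are taken; one never recovers a clean conditional-expectation identity at any finite $n$. You should restructure your identification argument along these lines rather than aim for $D^2(\Os,A^n)=\E{\mu_2'(\Os)\mid\F_{A^n}}$.
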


\begin{remark0}
	\label{rmk::critical_CLE}
	With some extra work, one can also obtain the above result when $A^n$ is formed by iterating $A_{-a,a}$ for any $a>0$. Intuitively, this just follows from the explicit relationship between these sets and the corresponding first passage sets $(A_{an})_n$, described in Lemma \ref{lem:fps}. 
\end{remark0}

The fact that the above martingales converge (when $A^n=A_{an}$, which we stick to from now on) and that their limit agrees with $(1/2) \tilde{D}_\infty$ - follows directly from \cite{Ai}. Indeed, in the case $a=2\lambda$, observe that for any $\Os\subseteq \D$, twice the derivative martingale $2D^2(\Os,A_{2\lambda n})$ is equal to $\tilde D_n(\Os)$ defined in  (1.3) of \cite{Ai} (see proof of Corollary \ref{IdPhi}). Thus, we know from Theorem 1.3 of \cite{Ai} that when  we iterate $A_{2\lambda}$, the associated sequence of measures $2D^2(\cdot,A_{2\lambda n})$ converges weakly to $\tilde{D}_\infty$. Moreover, it also follows from Theorem 1.3 of \cite{Ai} that for all dyadic $s>0 $,  $ 2D^2(\cdot, A_{2\lambda sn})$ converges to the same limit. Doob's maximal inequality then implies that there exists a modification of $2D^2(\cdot,A_t)$ that also converges to $\tilde D_\infty$ as $t\to \infty$. This clearly implies the convergence of our derivative measures to $(1/2)\tilde{D}_\infty$ for any $a>0$.

Now we would like to connect the measures $\tilde{D}_\infty$ and $\mu'$. Concentrating on the case $a=1$, we set \[D_n:=2D^2(0,A_n),\] where we have included the factor 2 for consistency with \cite{Ai}.  The immediate difficulty is that the martingales $D_n(\Os)$ are not uniformly integrable (U.I.). To solve this issue we work with a certain mollified and localized approximation of $\mu_2'(\Os)$. Recall from Section \ref{sec:cr} that the mollified measures $\nu_{\eps}$ defined in \eqref{eqn::nu_eps} converge weakly in probability as $\eps\to 0$ to $\mu_2'$. To ensure uniform integrability, we work with a localized version that we call $\nu_\eps^\beta$. This family is U.I. for any $\beta$ (as shown in \cite[Proposition 3.6]{EP}) and, moreover, there almost surely exists a $\beta_0$ such that $\nu_{\eps}^\beta=\nu_{\eps}$ for all $\beta\ge \beta_0$. We then roughly follow the strategy of the proof of Proposition \ref{propn::Ea}, and show that the conditional expectation  $\nu_\eps^\beta$ w.r.t. $\F_{A_n}$ is approximately equal to $D_n(\Os)/2$. 

\medbreak

\begin{proofof}{Proposition \ref{propn::derivative} }
 Consider the circle-average approximate measures $\nu_{\eps}$ from \eqref{eqn::nu_eps}, and choose a sequence $\eps_k\to 0$ such that $\nu_\eps \to \mu_2'$ almost surely. From now on whenever we write $\eps\to 0$, it means that we are converging to $0$ via $(\eps_k)_{k\in \N}$. We set, for fixed $\Os\subseteq \D$, 
\[\nu_\eps^{\beta}(\Os)=\int_{\Os} (-\Gamma_\eps(z)+2\log(1/\eps)+\beta)\I{T_\beta(z)\leq \eps}
\I{\eps\le d(z,\partial D)}\e^{2\Gamma_\eps(z)-2\log(1/\eps)}dz\] where 
$T_\beta(z)=\sup\{\eps\leq d(z,\partial D): \Gamma_\eps(z)-2\log(1/\eps)\leq -\beta\}$. It is shown in \cite[Proposition 3.6]{EP} that $\nu_\eps^\beta(\Os)$ is uniformly integrable for fixed $\beta\geq 0$. Additionally, if we define \[C_\beta :=\{-\Gamma_\eps(z)+2\log(1/\eps)+\beta >0 \text{ for all }z\in \D,0<\eps\leq d(z,\partial \D) \},\]then $\P(C_\beta)=1-o(1)$ as $\beta \to \infty$ thanks to \cite[Theorem 6.15]{HRVdisk}.

The strategy is to prove that for \begin{equation}\label{deftau}\tau_\beta := \inf_n \left \{n\in \N:\inf_{z\in \D\backslash A_{an}}-h_{A_{an}}(z) + 2 \log \CR^{-1}(z,\D\setminus A_{an})\le -\beta\right \},
\end{equation}
the almost sure limit
\begin{equation}\label{eqn::key}\lim_{\beta\to \infty}\lim_{n\to \infty}\lim_{\eps\to 0} \E{\nu_\eps^\beta(\Os)\mid \F_{A_n}}\I{\tau_\beta=\infty}\end{equation} exists and is equal to both $\mu_2'(\Os)$ and $\frac{1}{2} \tilde D_\infty(\Os)$ almost surely.

\subsubsection*{Let us first show that (\ref{eqn::key}) is equal to $\mu'_2(\Os)$.} Observe that since $\nu_\eps^\beta(\Os)$ is uniformly integrable, we have by Fatou's and reverse Fatou's lemma that, if the limit in $\eps$ exists (we will show that it does in the next step)
\[ \E{\liminf_{\eps\to 0} \nu_\eps^\beta(\Os)\mid \F_{A_n}} \leq \lim_{\eps\to 0}\mathbb{E}[\nu_\eps^\beta(\Os)\mid \F_{A_n}]\leq \E{\limsup_{\eps\to 0} \nu_\eps^\beta(\Os)\mid \F_{A_n}}.\]
Taking the limit as $n, \beta \to \infty$ we obtain that
\[\lim_{\beta\to \infty}\liminf_{\eps\to 0} \nu_\eps^\beta(\Os) \leq \lim_{\beta\to \infty}\lim_{n\to \infty}\lim_{\eps\to 0}\mathbb{E}[\nu_\eps^\beta(\Os)\mid \F_{A_n}] \leq \lim_{\beta\to \infty}\limsup_{\eps\to 0} \nu_\eps^\beta(\Os) .\]
 However, since $\nu_\eps^\beta(\Os)=\nu_\eps(\Os)$ on the event $C_\beta$, and almost surely $\1_{C_\beta}\uparrow 1$ as $\beta\to \infty$, the right and left hand sides of the above two expressions are equal to $\mu'_2(\Os)$.
 
 Now, we only need to prove that almost surely $\I{\tau_\beta=\infty}\to 1$ as $\beta\to \infty$, which is due to the fact that 
 \begin{equation}\label{eqn::unifmin} 
 \inf_{z\in \D} \inf_{n\in \N} \left(-2an+4 \log \CR^{-1}(z, \D\setminus A_{an} )\right) > -\infty.
 \end{equation}
 This just follows from Lemma \ref{lem::criticaltozero} and its proof: indeed, as in the proof, one can observe that for any $z$ and any $n$, we have that $M^2_n( \D) \geq e^{2an-4 \log \CR^{-1}(z, \D\setminus A_{an})}$. Hence as $M^2_n(\D) \to 0$ by Lemma \ref{lem::criticaltozero}, we obtain \eqref{eqn::unifmin}. Thus,\eqref{eqn::key} is equal to $\mu_2'(\Os)$. 

\subsubsection*{We now show that (\ref{eqn::key}) is equal to $\frac{1}{2} \tilde D_\infty (\Os)$.} Similarly to the proof of Proposition \ref{propn::Ea} we write $\mathbb{E}[\nu_\eps^\beta(\Os)\mid \F_{A_n}]$ as the sum of:
\begin{align*} E^1(n,\beta,\eps) & := \int_{\Os\setminus A_n^\eps} \Ea{(-\Gamma_\eps(z)+2\log(1/\eps))\I{T_\beta(z)\leq \eps}\e^{2\Gamma_\eps(z)-2\log(1/\eps)}}dz , \\
E^2(n,\beta,\eps) & :=  \int_{\Os \cap A_n^\eps} \Ea{(-\Gamma_\eps(z)+2\log(1/\eps))\I{T_\beta(z)\leq \eps}\e^{2\Gamma_\eps(z)-2\log(1/\eps)}}dz \text{, and} \\
E^3(n,\beta,\eps) & := \beta \int_{\Os} \Ea{\I{T_\beta(z)\leq \eps}
		\I{\eps\le d(z,\partial D)}\e^{2\Gamma_\eps(z)-2\log(1/\eps)}}dz
\end{align*}
As before $A_n^\eps$ denotes the $\eps$-enlargement of $A_n$ and for shortness of notation we set $\mathbb E^{A_n}(\cdot) = \E{\cdot | \F_{A_n}}$. 

We first show that the terms $E^3(n,\beta,\eps)$ and $E^2(n,\beta,\eps)$ are negligible. For $E^3(n,\beta,\eps)$, we use the same calculation as in \eqref{eq:cond} to see that the limit in $\eps$ is less than or equal to $M_n$ which we know by Lemma \ref{lem::criticaltozero} converges to $0$ almost surely as $n\to \infty$. For $E^2(n,\beta,\eps)$ , on the one hand, by Definition \ref{def::fps} of $A_n$, we have that $(\Gamma_{A_n},\rho_z^\eps)\leq n$. On the other hand, for any $z \in A_n^\eps$ we have that conditionally on $A_n$, the variance of $\Gamma_\eps^{A_n}(z)$ is uniformly bounded. One way to see this is to write the variance explicitly using the Green's function, and to observe that the Green's function $G(z,w)$ is uniformly bounded for $d(w,z) \geq \eps/2$.
This implies that, for fixed $n,\beta$, the integrand of $E^2(n,\beta,\eps)$ is of order $\eps^2 \log(1/\eps)$ uniformly in $z$ and hence $E^2(n,\beta,\eps) \to 0$ as $\eps \to 0$.

We now deal with $E^1(n,\beta, \eps)$. Observe that if $\eps \leq d(A_n,z)$ then $(\Gamma_{A_n},\rho_z^\eps)=n$. Additionally, due to the Markov property of the GFF and an explicit computation, we have that conditionally on $\F_{A_n}$, i.e., under the probability $\P^{A_n}$,
\[\left (-n-\Gamma^{A_n}_\delta(z)+2\log(1/\delta)\right )\I{T_\beta(z)\leq \delta}\e^{2n+2\Gamma^{A_n}_\delta(z)-2\log(1/\delta)}\]
is a (reverse-time) martingale for $0<\delta \leq \delta_n(z):=d(z,\partial \D\cup A_n)$. 

Thus, we have that $E^1(n,\beta,\eps)$ is equal to
\[\int_{\Os\setminus A_n^\eps} \Ea{(-n-\Gamma^{A_n}_{\delta_n(z)}(z)+2\log(1/\delta_n(z)))\I{T_\beta(z)\leq \delta_n(z)}\e^{2n+2\Gamma^{A_n}_{\delta_n(z)}(z)-2\log(1/\delta_n(z))}}dz. \]
Since the integrand does not depend on $\eps$, the limit in $\eps$ exists almost surely and simply yields the integral over the whole of $\Os\setminus A_n$. 

Now, using that $\I{T_\beta(z)\leq \delta}= 1 - \I{T_\beta(z)>\delta}$, we rewrite $\lim_{\eps\to 0}E^1(n,\beta,\eps)$ as a difference between
\begin{equation}\label{eqn::e1_first}\int_{\Os\setminus A_n} \Ea{(-n-\Gamma^{A_n}_{\delta_n(z)}(z)+2\log(1/\delta_n(z)))\e^{2n+2\Gamma^{A_n}_{\delta_n(z)}(z)-2\log(1/\delta_n(z))}}dz \end{equation}
and 
\begin{equation}\label{eqn::e1_second}\int_{\Os\setminus A_n} \Ea{(-n-\Gamma^{A_n}_{\delta_n(z)}(z)+2\log(1/\delta_n(z)))\I{T_\beta(z)> \delta_n(z)}\e^{2n+2\Gamma^{A_n}_{\delta_n(z)}(z)-2\log(1/\delta_n(z))}}dz. \end{equation}
First notice that \eqref{eqn::e1_first} is equal to $D_n(\Os)/2$. This follows by a standard Gaussian calculation, as $\Gamma^{A_n}_{\delta_n(z)}$ is a mean zero normal random variable under $\mathbb{P}^{A_n}$ with variance $-\log \delta_n(z)+\log \CR(z,\D\setminus A_n).$ Therefore, we need only show that the random variable given by \eqref{eqn::e1_second} converges to $0$ on the event $\{\tau_{\beta}=\infty\}$, as $n\to \infty$ and then $\beta\to \infty$.

To show this, further decompose \eqref{eqn::e1_second} as a sum of
\begin{equation}\label{eqn::E4}
\int_{\Os\setminus A_n} \e^{2n-2\log(1/\delta_n(z))} \Ea{(-\Gamma^{A_n}_{\delta_n(z)}(z)+2\log\frac{\CR(z,\D\setminus A_n)}{\delta_n(z)})\I{T_\beta(z)> \delta_n(z)}\e^{2\Gamma^{A_n}_{\delta_n(z)}(z)}}dz.
\end{equation}
and
\begin{equation}\label{eqn::E3}
 \int_{\Os\setminus A_n} \e^{2n-2\log(1/\delta_n(z))}(-n+2\log\CR^{-1}(z,\D\setminus A_n))\Ea{\I{T_\beta(z)> \delta_n(z)}\e^{2\Gamma^{A_n}_{\delta_n(z)}(z)}}dz.
\end{equation}

Note that $\delta_n(z) \leq \CR(z, \D\setminus A_n) \leq 4\delta_n(z)$. This means that $\Gamma_{\delta_n(z)}^{A_n}(z)$ has bounded variance under $\mathbb{E}^{A_n}$, and so we see that \eqref{eqn::E4} is bounded by a constant times $M_n^2(\Os)$, which goes to $0$ by Lemma \ref{lem::criticaltozero}.

Finally, observe that on the event $\{\tau_\beta=\infty\}$, we have that
$-n+2\log\CR^{-1}(z,\D\setminus A_n)+\beta\geq 0$ for all $z \in \Os$.
Also, by Cauchy-Schwarz and the fact that $\Gamma_{\delta_n(z)}^{A_n}(z)$ has bounded variance under $\mathbb{E}^{A_n}$, we have
\[\Ea{\I{T_\beta(z)> \delta_n(z)}\e^{2\Gamma^{A_n}_{\delta_n(z)}(z)}} \leq c\P^{A_n}(C^c_\beta)^{1/2}.\]
This implies that on the event $\{\tau_\beta=\infty\}$ the absolute value of \eqref{eqn::E3} is upper bounded by
\[c'(|D_n(\Os)|/2+2\beta M_n(\Os)) \P^{A_n}(C^c_\beta)^{1/2}.\]
But the limit of the RHS as $n\to \infty$ is equal to $\frac{1}{2}\tilde D_\infty(\Os)\1_{C_{\beta-2}^c}$. Since this tends to $0$ as $\beta\to \infty$, we can conclude.	
\end{proofof}

\subsection{Seneta-Heyde rescaling.}\label{sectSH}

Finally, we show that one can also perform a so-called Seneta-Heyde rescaling to construct the critical Liouville measure using local sets. In fact we prove an even stronger result that will serve us in a follow-up paper, where we prove that the scaled subcritical measures $(2-\gamma)^{-1}\mu_\gamma$ converge to $2\mu_2'$. This result is known in the setting of multiplicative cascades \cite{Madaule}, and was conjectured in \cite{DSRV} for the Liouville measure. 

The moral of the proof can be summarised as follows: our set-up allows us to rather easily transfer the proofs from the multiplicative cascades setting to our context. In particular, the proof in this section follows very closely the proof of \cite{AiSh}, and its extension in \cite{Madaule}. Not only is the set-up of the proof exactly the same, but also technical details can be easily translated to our setting. We have, however, aimed to make this section self-contained and have simplified and shortened some of the technical steps. 

The main result of this section is the so called Seneta-Heyde rescaling. Let $M_n^2$ be defined as in Proposition \ref{propn::Ea} (with $a=1$ and $\gamma =2$), i.e. \[M_n^2(dz) = \e^{2n-2\lzn}dz.\]
We then see that there is a suitable rescaling of $M_n^2$ that converges to the derivative measure $\mu_2'$:

\begin{thm}[Seneta-Heyde Rescaling]\label{sh} \label{prop::seneta_hyde} For all Borelian $\Os \subseteq \D$ (including $\Os=\D$), 
	$\sqrt{n}M^2(\Os,A_{n})$ converges to $\frac{2}{\sqrt{\pi}}\mu_2'(\Os)$
	in probability as $n\to \infty$. In particular, we have $$\sqrt{n}M^2_n\to \frac{2}{\sqrt{\pi}}\mu_2'$$ weakly in probability as $n\to \infty$.
\end{thm}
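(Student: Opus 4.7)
\emph{Plan.} The proof will follow the Aid\'{e}kon--Shi strategy for the Seneta--Heyde rescaling of the critical additive martingale of a branching random walk, translated to our local set framework via the size-biased tilt $\hat{\mathbb P}^*$ of~\eqref{Pstar}. For $z\in\D$, set
\[ Y_k(z):=-k+2\log\CR^{-1}(z,\D\setminus A_k), \]
so that under $\hat{\mathbb P}^*$, conditionally on $Z=z$, the process $(Y_k(z))_{k\ge 0}$ is a mean-zero random walk with step variance $1/2$ (Remark~\ref{rmk::variance}), starting from $Y_0(z) = -2\log\CR(z,\D)\ge 0$ and satisfying $Y_k(z)\ge -C$ uniformly in $z,k$ by~\eqref{eqn::unifmin}. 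Note that $D^2(\Os,A_n)=\int_\Os Y_n(z)\,e^{2n}\CR(z,\D\setminus A_n)^2\,dz$ converges to $\mu_2'(\Os)$ by Proposition~\ref{propn::derivative}, while $M^2(\Os,A_n)$ has the same integrand but without the $Y_n(z)$ weight. The $\sqrt n$ rescaling and the constant $2/\sqrt\pi$ arise from the Kolmogorov/Caravenna--Chaumont asymptotic
\[ \hat{\mathbb P}^*\bigl(Y_k(z)\ge -\beta,\ \forall\, 0\le k\le n\,\big|\,Z=z\bigr)\sim \frac{c_{1/2}\,R(Y_0(z)+\beta)}{\sqrt n},\qquad n\to\infty, \]
with $R$ the associated renewal function, $R(x)\sim x$ at infinity; the constant $2/\sqrt\pi$ is exactly $c_{1/2}$ for a walk with variance $1/2$.

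\emph{Truncation and first moment.} For $\beta>0$ introduce the truncated analogue
\[ M_n^{2,\beta}(\Os):=\int_\Os e^{2n}\CR(z,\D\setminus A_n)^2\,\I{Y_k(z)\ge -\beta,\,\forall\, 0\le k\le n}\,dz. \]
Using the change of measure~\eqref{com}--\eqref{Pstar}, for $m\le n$ one rewrites
\[ \E{M_n^{2,\beta}(\Os)\mid\F_{A_m}} = \int_\Os e^{2m}\CR(z,\D\setminus A_m)^2\,\hat{\mathbb P}^*\!\left(\min_{m\le k\le n}Y_k(z)\ge -\beta\,\Big|\,\F_{A_m},Z=z\right)dz. \]
Applying the Kolmogorov asymptotic to the walk restarted at time $m$ at height $Y_m(z)$,
\[ \sqrt n\,\E{M_n^{2,\beta}(\Os)\mid\F_{A_m}}\xrightarrow[n\to\infty]{} \frac{2}{\sqrt\pi}\int_\Os e^{2m}\CR(z,\D\setminus A_m)^2\,R(Y_m(z)+\beta)\,\I{\min_{k\le m}Y_k(z)\ge -\beta}\,dz. \]
Since $R(x)\sim x$, taking first $m\to\infty$ and then $\beta\to\infty$, and invoking Proposition~\ref{propn::derivative}, identifies the right-hand side with $\tfrac{2}{\sqrt\pi}\mu_2'(\Os)$.

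\emph{Second moment, removal of truncation, and main obstacle.} Convergence in probability is obtained via a matching second moment bound showing that $\sqrt n M_n^{2,\beta}(\Os)$ converges in $L^2$ to a limit $L^\beta(\Os)$, which by the previous step must equal $\tfrac{2}{\sqrt\pi}\mu_2'(\Os)$ in the limit $\beta\to\infty$. The core ingredient is a two-point estimate: by the local set Markov property (Lemma~\ref{BPLS}), for $z\ne z'\in\Os$ the walks $Y_\cdot(z)$ and $Y_\cdot(z')$ coincide up to the first index $\tau(z,z')$ at which $z,z'$ lie in different components of $\D\setminus A_k$, and evolve conditionally independently afterwards. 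Integrating two-point Kolmogorov estimates over the distribution of $\tau(z,z')$, as in~\cite{AiSh}, yields the required $L^2$ bound. Removing the truncation amounts to showing $\sqrt n\bigl(M^2(\Os,A_n)-M_n^{2,\beta}(\Os)\bigr)\to 0$ in probability as $n\to\infty$ and then $\beta\to\infty$; in the BRW setting this is handled by comparing with the derivative martingale $D_n$ and using that the latter concentrates on particles whose walks stay above $-\beta$ for large $\beta$, and the same strategy applies here via Proposition~\ref{propn::derivative}. The main obstacle is the geometric input for the two-point estimate: replacing the discrete tree structure of a BRW by a sufficiently sharp control of the separation time $\tau(z,z')$, via the zero Lebesgue measure of first-passage sets and the Brownian exit time description of $\log\CR^{-1}(z,\D\setminus A_k)$ from Proposition~\ref{twdesc2}, so that the two-point contribution is integrable over $\Os\times\Os$. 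With this geometric input in hand, the ballot-type estimates of Aid\'{e}kon--Shi for walks conditioned to stay positive carry over essentially verbatim.
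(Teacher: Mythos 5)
Your plan captures the right ingredients (truncation, a random-walk parametrisation via conformal radii, the persistence asymptotic, convergence to a Brownian meander), but the route you propose through the second moment is genuinely different from the paper's, and it contains a gap.

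The paper never computes moments of $\sqrt{n}M_n^{2,\beta}(\Os)$ directly under $\mathbb{P}$. It instead introduces a family of rooted measures $\mathbb{Q}_\eta$ by weighting $\mathbb{P}$ with a truncated \emph{derivative} martingale $\tilde{D}_n^\eta$ built from the renewal function $h_1$, and studies the ratio $\tilde{K}_n^{F,\eta}(\Os)/\tilde{D}_n^\eta(\Os)$ under $\mathbb{Q}_\eta$. Since $h_1\ge 1$ and $F$ is bounded, this ratio is itself bounded by $\|F\|_\infty$, so its moments under $\mathbb{Q}_\eta$ are finite for free and can be computed via the exact conditional representation \eqref{eqn::conditionalexp}. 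By contrast, your plan rests on showing that $\sqrt{n}M_n^{2,\beta}(\Os)$ converges in $L^2$. This is problematic: the limit you identify is (a constant times) the truncated derivative-martingale limit, which is uniformly integrable but not $L^2$-bounded, and correspondingly the diagonal piece of the two-point expansion (the spine term in a many-to-two calculation) makes $n\,\E{(M_n^{2,\beta}(\Os))^2}$ grow with $n$. Passing to the tilted measure and working with a bounded ratio is precisely how \cite{AiSh}, and this paper following them, sidestep this. Your first-moment step, via $\E{M_n^{2,\beta}(\Os)\mid\F_{A_m}}$ and a restart at time $m$, is a reasonable alternative to the paper's; but a correct first moment with a broken second moment does not give concentration.

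You also defer, rather than resolve, the main geometric input. You rightly flag that the cells $\D\setminus A_k$ have irregular shapes and sizes, unlike the deterministic tree of a cascade, but you do not say how the separation time $\tau(z,z')$ is to be controlled. The paper handles this via the event $\Ev_n^4$ bounding $\sup_{w\in\mathcal{B}_j}|Z-w|$ by $j^c\CR(Z,\mathcal{B}_j)$ (borrowed from \cite[Lemma 3.5]{Ai}) together with the brother-loop conditioning and martingale bound in the proof of Lemma \ref{lemma::e_n}, which the paper explicitly singles out as the one place requiring work beyond the branching-random-walk case. Without supplying this, the assertion that the Aid\'{e}kon--Shi ballot estimates ``carry over essentially verbatim'' is unjustified.
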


In fact this is a direct consequence of a stronger and more general statement, that will serve us in a follow-up work. First let \[D_n(\Os):=2D^2(\Os,A_n)\] as in Section \ref{sec::derivativemgale}.

\begin{thm} \label{prop::shstrong}
	Suppose that $F:\R \to \R^+$ is a positive, continuous and bounded function, and let 
	\[ K_n^F(dz):= \e^{2n-2\lzn} F\left(\frac{-2n+4\lzn}{\sqrt{n}}\right) \, dz.\]
	Then for any Borelian $\Os \subseteq \D$ (inlcuding $\Os=\D$) we have
	\begin{equation*}
	\frac{\sqrt{n} K_n^F(\Os)}{D_n(\Os)} \to \sqrt{\frac{1}{\pi}} \mathbb{E}[F(\sqrt{2}R_1)]
	\end{equation*}
	in probability as $n\to \infty$, where $R_1$ has the law of a Brownian meander at time $1$. 
\end{thm}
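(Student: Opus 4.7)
We adapt the Aïdékon-Shi proof \cite{AiSh} for branching random walks, transferred to our setting via the spine change of measure of Section 5.1. On the enlarged probability space carrying $\Gamma$, the iterated first-passage sets $(A_n)_{n\ge 0}$ (with $a=1$), and an independent point $Z$ Lebesgue-distributed on $\D$, set
\[ V_n(z):=2\log\CR^{-1}(z,\D\setminus A_n)-n. \]
Under $\hat{\P}^*_z:=\hat{\P}^*(\cdot\mid Z=z)$, the proof of Proposition 5.1 and Remark 5.2, together with the iterative construction of $A_n$ and conformal invariance, show that $(V_n(z))_{n\ge 0}$ is a random walk with i.i.d.\ increments, mean $0$ and variance $1/2$ per step, started from $V_0(z)=2\log\CR^{-1}(z,\D)\ge 0$. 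The fundamental identity
\[ \E{\e^{-V_n(z)}\Phi(V_0(z),\dots,V_n(z))}=\CR(z,\D)^2\,\hat{\mathbb E}_z\!\left[\Phi(V_0(z),\dots,V_n(z))\right]\]
turns computations for $K_n^F$ and $D_n$ (whose integrands read $\e^{-V_n(z)}F(2V_n(z)/\sqrt n)$ and $2V_n(z)\,\e^{-V_n(z)}$ respectively) into functionals of a mean-zero random walk.

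\textbf{First moment via the Brownian meander.} Introduce two barriers: a lower one $\Lambda_n^+(z):=\{V_k(z)>0,\ k\le n\}$ (localising to the support of the derivative measure) and an upper one $\Lambda_n^\beta(z):=\{V_k(z)<\beta,\ k\le n\}$ (needed later for $L^2$). Write $K_n^{F,+,\beta}$ and $D_n^{+,\beta}$ for the restricted quantities. Kolmogorov's estimate for mean-zero random walks of variance $1/2$ yields $\sqrt n\,\hat{\P}_z(\Lambda_n^+(z))\to V_0(z)\cdot(2/\sqrt{\pi})$, and Iglehart's invariance principle gives that, conditionally on $\Lambda_n^+(z)$, $(V_{\lfloor nt\rfloor}(z)/\sqrt n)_{t\in[0,1]}$ converges in distribution to $(1/\sqrt{2})m$ with $m$ a standard Brownian meander. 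It follows that
\[ \sqrt n\,\E{K_n^{F,+}(\Os)}\ \longrightarrow\ \frac{2}{\sqrt{\pi}}\,\E{F(\sqrt{2}R_1)}\int_\Os\CR(z,\D)^2 V_0(z)\,dz, \]
while a direct computation gives $\E{D_n(\Os)}=2\int_\Os\CR(z,\D)^2 V_0(z)\,dz=\E{D_\infty(\Os)}$. The ratio of limiting first moments equals $\sqrt{1/\pi}\,\E{F(\sqrt{2}R_1)}$, as claimed.

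\textbf{Second moment, many-to-two, and removing the truncations.} To upgrade to convergence in probability, one estimates the $L^2$ norm of $\sqrt n K_n^{F,+,\beta}(\Os)-\sqrt{1/\pi}\,\E{F(\sqrt{2}R_1)}\,D_n^{+,\beta}(\Os)$ using a many-to-two formula: for $z_1,z_2\in\D$, the Markov property of the GFF applied at the coalescence level $T_n(z_1,z_2):=\sup\{k\le n:z_1,z_2\text{ lie in the same component of }\D\setminus A_k\}$ shows that $(V_k(z_1),V_k(z_2))$ coincide up to $T_n(z_1,z_2)$ and then evolve as two conditionally independent mean-zero random walks from the common value $V_{T_n}(z_1)$. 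Substituting into the second moment, together with estimates on the distribution of $T_n(z_1,z_2)$ coming from the iterative FPS construction and conformal invariance, reduces the bound to one-dimensional random-walk integrals with barriers, handled as in \cite{AiSh,Madaule}. This yields convergence in probability for the doubly-truncated ratio $\sqrt n K_n^{F,+,\beta}/D_n^{+,\beta}$. We then remove the truncations in two steps: the upper barrier $\beta\to\infty$ is controlled via the event $\{\tau_\beta=\infty\}$ from Section 5.2, whose probability tends to $1$; the lower barrier is removed using that the main contribution to both $K_n^F$ and $D_n$ comes from $\{V_n(z)>0\}$, the complementary contribution being negligible (cf.\ the proofs of Lemma 5.3 and Proposition 5.4). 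The hardest step will be the $L^2$/many-to-two estimate: while formally analogous to the BRW case, it depends on precise bivariate control of $V_n(z_1),V_n(z_2)$ and the coalescence time $T_n(z_1,z_2)$ under the tilted measure; once this is in place, the rest is a fairly routine adaptation of \cite{AiSh,Madaule}.
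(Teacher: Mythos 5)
Your overall strategy---spine change of measure, first and second moments, a many-to-two decomposition via the coalescence level $T_n(z_1,z_2)$, then removal of the truncations---is in the right family: it is the route of A\"{i}dékon--Shi \cite{AiSh}, of which the paper's Proposition \ref{claim::sh} is a reformulation. The paper instead works with moments of the \emph{ratio} $\tilde K_n^{F,\eta}/\tilde D_n^\eta$ under the rooted measure $\mathbb{Q}_\eta$ (using the key identity that this ratio equals $\mathbb{Q}_\eta^*[F(S_n(Z)/\sqrt n)/h_1(S_n(Z)+2\eta)\mid\F_{A_n}]$), and replaces the explicit many-to-two computation by a separation-of-scales argument at $k_n=n^{1/3}$. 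These are two presentations of the same underlying mechanism. However, two points in your proposal are more than cosmetic and as written the argument would not close.

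First, the claim that $\E{D_n(\Os)}=2\int_\Os \CR(z,\D)^2 V_0(z)\,dz = \E{D_\infty(\Os)}$ is false: $D_n$ is a martingale with constant mean $\E{D_0(\Os)}$, but it is \emph{not} uniformly integrable, and $D_\infty$ has infinite mean (its law has a tail of order $1/t$). This is precisely why the truncations and barriers are needed at all, and conflating $\E{D_n}$ with $\E{D_\infty}$ hides the main difficulty.

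Second, and more substantively, your truncated quantity $D_n^{+,\beta}$ uses only indicators of the two barriers and omits the renewal function $h_1$. This breaks in two ways. (i) Without the weight $h_1(\cdot)$, $D_n^{+,\beta}$ is not a martingale (the Doob $h$-transform is exactly what restores the martingale property after inserting the lower barrier), so the machinery of constructing the rooted measure and of identifying a.s.\ limits is unavailable. (ii) The first-moment constant is wrong: Kozlov's persistence estimate gives $\hat{\P}^*_z(\Lambda_n^+(z))\sim \theta\, h_{1,V}(V_0(z))/\sqrt n$, with $h_{1,V}$ the renewal function of the walk, not $\sim (2/\sqrt\pi)\,V_0(z)/\sqrt n$. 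Consequently $\sqrt n\,\E{K_n^{F,+}(\Os)}$ limits to a constant times $\int_\Os \CR(z,\D)^2\, h_{1,V}(V_0(z))\,dz$, while $\E{D_n(\Os)}\propto\int_\Os \CR(z,\D)^2\, V_0(z)\,dz$; since $h_{1,V}(u)=c_{0,V}\,u$ only \emph{asymptotically}, these integrals do not match and the $L^2$ norm of $\sqrt n\,K_n^{F,+,\beta}-cD_n^{+,\beta}$ cannot tend to zero for the claimed $c$. The fix is to put $h_1$ into the truncated derivative martingale from the start, i.e.\ to use $\tilde D_n^\eta$ with weight $h_1(S_n(z)+2\eta)\1_{E_\eta(n,z)}$ as in Section 5.3.1: the moment computations then produce $1/(c_0\sqrt\pi)$, and passing from $\tilde D_n^\eta$ to $D_n$ using $h_1(u)\sim c_0 u$ (valid because $S_n(z)\to\infty$ uniformly by \eqref{eqn::unifmin}) recovers $\sqrt{1/\pi}$.
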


Indeed, in order to deduce Theorem \ref{sh} from this general statement we first take $F = 1$ to conclude that $\sqrt{n} M_n^2(\Os)/D_n(\Os)\to \sqrt{1/\pi}$ in probability, as $n\to \infty$.  As by Proposition \ref{propn::derivative} we also have that $D_n(\Os)\to 2\mu'(\Os)$ almost surely, Theorem \ref{sh} follows by invoking Remark \ref{IdWC}.

It is convenient to work (we will later explain why) under a certain family of \emph{rooted measures}, that heuristically amount to picking a typical point from the critical measure.

\subsubsection{Another family of rooted measures}\label{sec:rm}

 Recall that in Section \ref{sec:crz} we already made use of certain rooted measures, one for each value of $\gamma$, obtained by weighting our original measure by the martingale $M_n^\gamma$. 
 
 To prove Theorem \ref{prop::shstrong}, we will define a different family $\mathbb{Q}_\eta$ of rooted measures, using certain martingales $(\tilde{D}_n^\eta)_{\eta \ge 0}$ (defined below) as a weighting instead. These martingales provide truncated approximations of the critical measure, with $\eta > 0$ the truncation parameter. 
 
 We already saw in the proof of Lemma \ref{lem::criticaltozero} that if $(\Gamma,Z)$ has the law $\hat{\mathbb{P}}^*(d\Gamma, dz)$ defined in \eqref{Pstar}, then the process 
 \[S_n(Z):=-2n+4\log\CR^{-1}(Z,\D\setminus A_n)\] 
 is a random walk with mean-zero increments under the conditional law $\hat{\mathbb{P}}^*(d\Gamma |Z)$. Moreover, this conditional law is that same as that of $S_n$ under $\mathbb{E}$, but weighted by 
 \[ \e^{2n-2\log \CR^{-1}(Z,\D\setminus A_n)+2\log \CR^{-1}(Z,\D)}.\] 
 By conformal invariance of the GFF, this implies that the conditional law of $(S_n-S_0)$ under $\hat{\mathbb{P}}^*(d\Gamma |Z)$ does not depend on $Z$ (although $S_0=4\log \CR^{-1}(Z,\D)$ clearly does.) 

Now, let us define, as in \cite{Ai},
\[\tilde{D}_n^\eta(\Os) := \int_{\Os} h_1\left(S_n(z)+2\eta\right)1_{E_\eta(n,z)}e^{2n}\CR(z; \D\setminus A_n)^2\, dz,\]
where $E_\eta(n,z):= \{S_m \ge -2\eta \text{ for all } m\leq n\}$,
and $h_1$ is the renewal function associated with the random walk $(S_n-S_0)$ under $\hat{ \mathbb P}^*(d\Gamma \, | \, Z)$:
\begin{equation}
\label{eqn::renew_defn}
h_1(u):= \hat{ \mathbb P}^*\left(\left. \sum_{j=0}^{\infty} \I{\inf_{i\leq j-1} (S_i - S_0) > S_j-S_0\geq -u} \, \right| \, Z \right) \ge 1, \ \ u\geq 0.
\end{equation}
This is a deterministic function of $u$ (in particular, not depending on $Z$) by the discussion above. We have collected further background and properties of the renewal function in Appendix \ref{sec::appendix}.

Proposition 3.2 of \cite{Ai} implies that for all $\eta > 0$, $\tilde{D}_n^\eta(\Os)$ is a uniformly integrable positive martingale with respect to $(\F_{A_n})_n$ and our initial probability measure $\mathbb{P}$. Hence, we can define a new probability measure
$\mathbb{Q}_{\eta}$ by setting 

\begin{equation}\label{eqn::q_eta_com} \frac{d\mathbb{Q}_{\eta}}{d\mathbb{P}} \big|_{\F_{A_n}} :=  \frac{\dne(\Os)}{\tilde{D}_0^\eta(\Os)}.\end{equation}
\footnote{Note that, by definition, the measure $\mathbb{Q}_\eta$ also depends on the set $\Os$. This is just a technical convenience, and we omit the dependence from the notation, as it should always be contextually clear which $\Os$ we are using.} Again we extend this to a \emph{rooted} measure on the field $\Gamma$ plus a distinguished point $Z$ by setting  $\mathbb{Q}_\eta^*(d\Gamma, dz)$ restricted to ${\F_{A_n}^*} = \F_{A_n}\vee \sigma(Z)$ to be equal to 
\[h_1(S_n(z) +2\eta)\e^{2n-2 \log \CR^{-1}(z,\D\setminus A_n)}\ind{z}{n}\frac{\1_{\Os}(z)}{\tilde{D}_0^\eta} \,dz \,\mathbb{P}[d\Gamma].\]

We make the following observations, which follow from direct calculations, together with the Markov property of the renewal function \eqref{eqn::h_1_renewal}:
\begin{enumerate}
	\item The marginal law of $Z$ under $\mathbb{Q}_\eta^*$ is proportional to $h_1(4\log \CR^{-1}(z,\D)+2\eta)\CR(z,\D)^{2} \1_{\Os}(z) dz$.
	\item The marginal law of the field $\Gamma$ under $\mathbb{Q}_\eta^*$ is given by $\mathbb{Q}_\eta$.
	\item The conditional law of $Z$ given the field $\Gamma$ has density $$ \tilde{D}^\eta_n(\Os)^{-1} h_1(S_n(z)+2\eta)\e^{2n-2\log \CR^{-1}(z,\D\setminus A_n)}\1_{E_\eta(n,z)} \I{z\in \Os} $$ with respect to Lebesgue measure.
	\item Finally, write  $\mathbb{Q}_{\eta,z}^*=\mathbb{Q}_\eta^*[\cdot \mid Z = z]$ for the law of $\Gamma$ given the point $Z = z$. The law of the sequence $(A_n)_n$ under this measure can be described as follows. First sample $A_1$ with law weighted by 
	\[ \frac{h_1(S_1(z)+2\eta)}{h_1(S_0(z)+2\eta)} \ind{z}{1}\e^{2-2\log \CR^{-1}(z,\D\setminus A_1)+2\log \CR^{-1}(z,\D)}.\]
	Then, given $A_k$ for any $k\geq 1$, construct an independent copy of $(A_n)_n$ inside each component of $\D\setminus A_k$ that does \emph{not} contain the point $z$. Inside the component containing $z$, let us call this $\mathcal{B}_k$, construct the components of $A_{k+1}\cap \mathcal{B}_k$ by weighting their laws by
	\[ \frac{h_1(S_{k+1}(z)+2\eta)}{h_1(S_k(z)+2\eta)} \ind{z}{k+1} \e^{2-2\log \CR^{-1}(z,\D\setminus A_{k+1})+2\log \CR^{-1}(z,\D\setminus A_k)}. \] This defines the law of the sets $A_n$, and hence by iteration, the law of $\Gamma$.
\end{enumerate}

It then follows that for any $n$ the law of $(S_k(z))_{1\leq k \leq n}:=(-2k+4\log \CR(z,\D\setminus A_i))_{1\leq k \leq n}$ under $\qex$ is the same as its law under $\hat{\mathbb{P}}^*[\cdot \mid Z=z]$, but weighted by 
	\begin{equation}
	\label{eqn::doob_weight}
	 \frac{h_1(S_n(z) + 2\eta)}{h_1(S_0(z)+2\eta)} \1_{E_\eta(z,n)}.\end{equation} 
By the classical theory of Doob h-transforms this weighting is the same as conditioning $(S_n(z))_n$ to stay above $-2\eta$ (see for example \cite[Fact 3.2 (iii)]{AiSh}).

\subsubsection{Proof of Theorem \ref{prop::shstrong}}

In order to prove Theorem  \ref{prop::shstrong} we would like to use a first and second moment method to show that the random variables $\sqrt{n} K_n^F(\Os)/D_n(\Os)$ converge to a constant. However, these moments might explode a priori. Thus, we truncate the random variables: turning $D_n$ into $\tilde D_n^\eta$ and also adding the indicator $\I{E_\eta(n,z)}$ in the definition of $K_n^F$. Once we have done this,  it actually turns out to be more convenient to work under the rooted measure in order to study the truncated ratio. This is partly because, under the rooted measure, the ratio can be naturally written as a functional of the marked point $Z$.

So, we set
\[\tilde{K}_n^{F,\eta}(\Os):= \int_{\Os}e^{2n-2\lzn}F\left (\frac{S_n(z)}{\sqrt{n}}\right )\ind{z}{n} \, dz.\] 

As mentioned above, the proof of Theorem  \ref{prop::shstrong} follows by studying the behaviour of $\sqrt{n}\tilde{K}_n^{F,\eta}/\tilde{D}_n^\eta$ under the rooted measure $\mathbb{Q}_\eta$. More precisely, we establish the following proposition:
\begin{propn}\label{claim::sh} For any $\eta>0$ and all Borel $\Os\subseteq \D$ (including $\Os=\D$)
	\label{lem::convqaprob} \[\sqrt{n} \frac{\tilde{K}^{F,\eta}_n(\Os)}{\tilde D_n^{\eta}(\Os)}  \to\frac{1}{c_0\sqrt{\pi}}  \E{F(\sqrt{2}R_1)}\] in $\mathbb{Q}_{\eta}$-probability as $n\to \infty$, where 
 $c_0\in (0,\infty)$ is such that $
h_1(u)/u \to c_0 \text{ as } u\to \infty$, see \eqref{eqn::renewal_thm}, and $R_1$ is a Brownian meander at time $1$.
\end{propn}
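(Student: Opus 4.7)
\begin{proofsketch}
The starting observation is that, via the rooted measure $\mathbb{Q}_\eta^*$ of Section \ref{sec:rm}, the left hand side of the claimed convergence is exactly a conditional expectation. Using the density of $Z$ given $\Gamma$ under $\mathbb{Q}_\eta^*$,
\[ \sqrt{n}\,\frac{\tilde{K}_n^{F,\eta}(\Os)}{\tilde{D}_n^\eta(\Os)} \;=\; \mathbb{E}_{\mathbb{Q}_\eta^*}\!\left[\sqrt{n}\,\frac{F(S_n(Z)/\sqrt{n})}{h_1(S_n(Z)+2\eta)} \,\Big|\, \Gamma\right]. \]
Setting $L:=\frac{1}{c_0\sqrt{\pi}}\,\mathbb{E}[F(\sqrt{2}R_1)]$, my plan is to establish $L^2(\mathbb{Q}_\eta)$-convergence of this conditional expectation to the constant $L$ (which implies convergence in $\mathbb{Q}_\eta$-probability) by checking that its first and second moments converge to $L$ and $L^2$ respectively.

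\emph{First moment.} I would first integrate out $Z$ (whose marginal under $\mathbb{Q}_\eta^*$ has density proportional to $h_1(S_0(z)+2\eta)\CR(z,\D)^2\mathbf{1}_{\Os}(z)$) and then use the Doob $h$-transform description of the conditional law of $(S_k(z))_{k\le n}$ under $\qex$ to cancel the weight $h_1(S_n+2\eta)$:
\[ \mathbb{E}_{\mathbb{Q}_\eta^*}\!\left[\sqrt{n}\,\frac{F(S_n(Z)/\sqrt{n})}{h_1(S_n(Z)+2\eta)}\right] = \frac{1}{\tilde{D}_0^\eta(\Os)}\int_{\Os} \CR(z,\D)^2\,\sqrt{n}\,\hat{\mathbb{E}}^*_z\!\left[F(S_n/\sqrt{n})\mathbf{1}_{E_\eta(n,z)}\right]dz. \]
The essential analytic input is a Caravenna--Denisov--Wachtel type local limit theorem for the zero-mean, variance-$2$ walk $(S_n)$ (variance from Remark \ref{rmk::variance}) conditioned to stay above $-2\eta$:
\[ \sqrt{n}\,\hat{\mathbb{E}}^*_z\!\left[F(S_n/\sqrt{n})\mathbf{1}_{E_\eta(n,z)}\right]\;\longrightarrow\;h_1(S_0(z)+2\eta)\cdot\frac{1}{c_0\sqrt{\pi}}\,\mathbb{E}[F(\sqrt{2}R_1)], \]
uniformly on compacts in $S_0(z)$; this is the analogue of the input used in \cite{AiSh, Madaule} and is obtained from the renewal material in Appendix \ref{sec::appendix}. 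Substituting, and recognising $\tilde{D}_0^\eta(\Os)=\int_{\Os}h_1(S_0(z)+2\eta)\CR(z,\D)^2\,dz$, gives the first moment limit $L$.

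\emph{Second moment.} I would enlarge the sample space to a two-point rooted measure $\mathbb{Q}_\eta^{**}$, sampling $Z_1,Z_2\in \Os$ conditionally i.i.d.\ given $\Gamma$ with the above density. Then
\[ \mathbb{E}_{\mathbb{Q}_\eta}\!\left[\Big(\sqrt{n}\,\tilde{K}_n^{F,\eta}(\Os)/\tilde{D}_n^\eta(\Os)\Big)^2\right] = \mathbb{E}_{\mathbb{Q}_\eta^{**}}\!\left[n\,\prod_{i=1}^2 \frac{F(S_n(Z_i)/\sqrt{n})}{h_1(S_n(Z_i)+2\eta)}\right]. \]
I would decompose according to the \emph{splitting time} $\tau := \inf\{k\ge 0 : Z_1,Z_2 \text{ lie in different components of } \D\setminus A_k\}$. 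For $k<\tau$ the two spines coincide, while for $k\ge\tau$ the Markov property of the GFF together with the iterative FPS construction of the $A^n$ makes the two walks $(S_k(Z_1),S_k(Z_2))$ conditionally independent, each distributed as a one-point rooted walk inside its own connected component. For a fixed splitting level $m\ll n$, applying Step 1 inside each of the two post-split components yields the product limit $L\cdot L$. Summing in $m$ and using that $\tau<\infty$ almost surely (from $\overline{\bigcup_n A^n}=\overline{\D}$) completes the second-moment computation and gives the required $L^2$.

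The main obstacle will be uniform control over the splitting time. Large $\tau$ (close to $n$) requires care since neither spine has had time to reach its Brownian meander scaling limit; I would control this regime using $h_1(u)\le C(1+u)$ together with the uniform asymptotic $\hat{\mathbb{P}}^*_z(E_\eta(n,z))\le C\,h_1(S_0+2\eta)/\sqrt{n}$ to show it contributes $o(1)$. Small $\tau$, for which the two post-split components can be quite distorted, is handled via the uniform integrability of the derivative martingales $\tilde D_n^\eta$ established in \cite{Ai}. These are precisely the estimates used in the corresponding passages of \cite{AiSh, Madaule}, and our local set framework is designed so that this translation from multiplicative cascades goes through with only cosmetic modifications.
\end{proofsketch}
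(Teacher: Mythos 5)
Your first-moment computation is essentially the same as the paper's, and the identity $\sqrt{n}\tilde{K}_n^{F,\eta}/\tilde{D}_n^\eta = \mathbb{Q}_\eta^*[\sqrt{n}\,F(S_n(Z)/\sqrt{n})/h_1(S_n(Z)+2\eta)\mid \mathcal{F}_{A_n}]$ together with the persistency and Brownian-meander asymptotics is the right skeleton. However, you omit a step the paper has to confront head on: the convergence of the integrand is only uniform over $z$ in a compact subset of $\D$, since $S_0(z)=4\log\CR^{-1}(z,\D)$ blows up at $\partial\D$. The paper therefore first proves the result for $\Os\subset r\D$ and then extends to $\Os=\D$ by a separate truncation argument exploiting exponential decay of $\CR(z,\D)^2$ near the boundary; as written, your dominated-convergence step does not cover this case.

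For the second moment you take a genuinely different route. The paper stays with the single rooted spine $Z$ and writes
\[
\qb{(\tilde K_n^{F,\eta}/\tilde D_n^\eta)^2} = \qb{\frac{\tilde K_n^{F,\eta}}{\tilde D_n^\eta}\cdot\frac{F(S_n(Z)/\sqrt n)}{h_1(S_n(Z)+2\eta)}},
\]
then argues asymptotic independence of the two factors via a \emph{fixed} separation of scales at $k_n=n^{1/3}$: it decomposes $\tilde K$ and $\tilde D$ into the contribution from inside and outside $\mathcal{B}_{k_n}$, introduces good events $\Ev_n^1,\Ev_n^2$ to control the inside contribution and the level of the walk at time $k_n$, and uses conditional independence given $\F^*_{A_{k_n}}$. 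You instead introduce a two-point rooted measure $\mathbb{Q}_\eta^{**}$ and decompose by the random splitting time $\tau$. This is the many-to-two / two-spine decomposition, a standard alternative in branching random walk literature; it is legitimate in principle and conceptually attractive because it makes the ``asymptotic independence'' tautological once the spines split. The trade-off is that you now must sum over all $m=\tau$, control the pre-split dynamics of a size-biased walk on $[0,m]$, and show that the total contribution of $m\gtrsim n^{1/3}$ (or some other growing threshold) is $o(1/n)$ -- precisely the regime where neither spine has reached its meander limit. Your sketch appeals to uniform integrability and the rough bound $h_1(u)\le C(1+u)$, but does not exhibit the actual estimate that truncates the sum in $m$, which is where the work lies (and where, in the local-set setting, one also needs the K\"oebe-type distortion control on the component shapes as in the paper's event $\Ev_n^4$). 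So the approach is different and plausible, but the sketch has real gaps at the large-$\tau$ end and at the boundary of $\D$.
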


Before proving this proposition, let us shortly explain how it implies the theorem. Let us stress once again that we have set things up so that we can very closely follow the proof of Theorem 1.1 in \cite{AiSh}.
\medskip 

\begin{proofof}{Theorem \ref{prop::shstrong} assuming Proposition \ref{claim::sh}}
To see heuristically why this proposition suffices, observe that thanks to \eqref{eqn::unifmin}, almost surely there is a (random) $\eta_0$ such that whenever $\eta\ge \eta_0$, the event $E_\eta(z,n)$ holds for all $n\in \N$ and $z\in \D$. This means that for all $\eta\ge \eta_0$, we have \[\tilde{K}_n^{F,\eta}(\Os)=K_n^F(\Os) \text{ and } \tilde{D}_n^\eta(\Os) \overset{n\to \infty}{\sim} c_0 D_n(\Os).\]
Moreover, we have seen that $\tilde{D}_n^\eta(\Os)$ converges almost surely to a strictly positive limit as $n\to \infty$ and then $\eta \to \infty$. It follows that convergence in probability under $\mathbb{P}$ and under $\mathbb{Q}_\eta$ are comparable when $\eta$ is large, implying Theorem \ref{prop::shstrong}. For an interested reader, the details are given in Appendix \ref{sec::appendixB}.
		\end{proofof}
\\
	
To prove Proposition \ref{claim::sh}, we first treat the case when $\Os$ is compactly supported in $\D$ and then in the end discuss how to extend this to sets intersecting the boundary of $\D$. This is to separate certain technicalities arising when working near the boundary.
\medskip

\begin{proofof}{Proposition \ref{claim::sh} for $\Os$ compactly supported in $\D$} We may assume without loss of generality that $\Os \subset r\D$ for some $r < 1$. From now on, we also omit the argument $\Os$ in $\tilde{K}_n^{F,\eta}(\Os)$ and $\tilde{D}_n^\eta(\Os)$ etc., in order to keep notations compact. 
	
	Define $\theta=1/(c_0\sqrt \pi)$.
	The idea is to control the first and second moments of $ \tilde{K}_n^{F,\eta}/\bar D_n^\eta$ as $n \rightarrow \infty$. More precisely, to show that: 
	\begin{align}
	\label{eqn::firstmoment}
	&\qb{\frac{\tilde K_n^{F,\eta}}{\tilde D_n^\eta}}= \frac{\theta \E{F(\sqrt{2}R_1)}}{\sqrt{n}} + o\left(\frac{1}{\sqrt{n}}\right) \text{ ; and}\\
\label{eqn::secondmoment}
&\qb{\left(\frac{\tilde K_n^{F,\eta}}{\tilde D_n^\eta}\right)^2} \leq \frac{(\theta\E{F(\sqrt{2}R_1)})^2}{n}+o\left(\frac{1}{n}\right)
	\end{align}
	as $n\to \infty$. Note that we have written $\mathbb{Q}_\eta^*$ rather than  $\mathbb{Q}_\eta$ here, but by observation (2) above, this makes no difference to the expectations (since the random variables inside are measurable with respect to $\mathcal{F}_{A_n}$). These estimates then prove Lemma \ref{claim::sh}, as they show that the variance of $\sqrt{n}\tilde{K}_n^{F,\eta}/\tilde{D}_n^\eta$ converges to $0$ with $n$.

 The key observation for the proofs of \eqref{eqn::firstmoment} and \eqref{eqn::secondmoment} lies in rewriting the moments using the rooted measure. Indeed, by observation (3) above, we can write:
	\begin{equation}
	\label{eqn::conditionalexp}
	\frac{\tilde{K}_n^{F,\eta}}{\tilde{D}_n^\eta} = \qb{ \frac{F(S_n(Z)/\sqrt{n})}{h_1(S_n(Z)+2\eta)} \mid \F_{A_n}}. \end{equation} 
	The advantage of this is that the functionals inside the expectation are just real-valued functions. Moreover, we know precisely the distribution of $Z$ under $\mathbb{Q}_\eta^*$. This allows us to directly calculate the first moment, and to control the second moment.

\subsubsection*{First moment estimate}
From the previous equation it follows that \begin{equation}
	\label{eqn::conditionalexp2}\mathbb{Q}_{\eta}^*\left[\frac{\tilde K_n^{F,\eta}}{\tilde D_n^\eta}\right]=\int_z \qa{\frac{F(S_n(z)/\sqrt{n})}{h_1(S_n(z)+2\eta)}} \mathbb{Q}_\eta^*[dz]\end{equation}
	 where $\mathbb{Q}_{\eta,z}^*$ represents the conditional law $\mathbb{Q}_\eta^*(\cdot | \{Z=z\})$, and $\mathbb{Q}_\eta^*[dz]$ the marginal density of $Z$ under $\mathbb{Q}_\eta^*$ as in Section \ref{sec:rm}.
	 
	 Now, by \eqref{eqn::doob_weight} we have for any $z\in \Os$ that
	$$ \qa{\frac{F(S_n(z)/\sqrt{n})}{h_1(S_n(z)+2\eta)}}=\frac{\hat{\mathbb{P}}^*\left( F(S_n(z)/\sqrt{n})\1_{E_\eta(n,z)}  \mid  \{Z=z\}\right)}{h_1(4\log \CR^{-1}(z,\D)+2\eta)}.$$ 
Applying \eqref{eqn::persistency} we see that this is equal to $\theta(1+o(1))/\sqrt{n}$ times
\begin{equation*} \hat{\mathbb{P}}_z^*\left( F\left(\frac{S_n(z)-S_0(z)+4\lzo}{\sqrt{n}}\right) \, \big| \, E_\eta(n,z)  \right)\end{equation*} 
Here we have written $\hat{ \mathbb P}_z^*$ for the law $\hat{ \mathbb P}^*(\cdot \mid \{Z=z\})$, but remember that the law of $S_n(z)-S_0(z)$ under $\hat{ \mathbb P}_z^*$ does not actually depend on $z$: it is a random walk with mean zero increments and variance equal to $2$ (see Remark \ref{rmk::variance}). This is now an expression we can deal with easily, because it is well known \cite{Bolthausen} that a mean zero bounded variance random walk conditioned to stay above some level converges to a Brownian meander. Thus
for every $z \in \Os$ and $\eta > 0$: \begin{equation}\label{eqn::donsker_law} n^{-1/2}(S_{\lfloor t/n\rfloor }(z)-S_0(z))_{0\leq t \le n}  \text{ under }  \hat{ \mathbb P}^*_z\left(\cdot  \mid E_\eta(n,z)\right) \, \end{equation} converges in distribution to $(\sqrt{2}R_t)_{0\le t \le 1}$ as $n\to \infty$, where $R$ is a Brownian meander on $[0,1]$. Hence, using continuity and boundedness of $F$, the integrand of the right-hand side of \eqref{eqn::conditionalexp2} is equal to $\theta(1+o(1))\E{F(\sqrt{2}R_1)}/\sqrt n$ for every $z\in \Os$. Note that the $o(1)$ is uniform over $z\in \Os$ since $S_0(z)=4\log \CR^{-1}(z,\D)$ is uniformly bounded over $z\in r\D$. By dominated convergence, we therefore obtain \eqref{eqn::firstmoment}. 

\subsubsection*{Second moment estimate}
We now move to	the second moment estimate. The idea is as follows. Using \eqref{eqn::conditionalexp} we can write 
\begin{equation}\label{eqn::K/D2}\qb{\left(\frac{\tilde K_n^{F,\eta}}{\tilde D_n^\eta}\right)^2}=\qb{\frac{\tilde K_n^{F,\eta}}{\tilde D_n^\eta} \frac{F(S_n(Z)/\sqrt{n})}{h_1(S_n(Z)+2\eta)}}
\end{equation}
Intuitively, the ratio $\tilde K_n^{F,\eta}/\tilde D_n^\eta$ will not depend too much on the final iterations of $A_n$, i.e. it depends only on $A_m$ for $m \leq k_n$ for some $k_n \ll n$. On the other hand the random walk $S_n(Z)$, doesn't depend too much on the initial iterations of $A_n$, i.e. on $A_m$ for $m \leq k'_n$ for $k'_n \ll n$. Thus, one may hope to argue that both terms in the right-hand side of \eqref{eqn::K/D2} are asymptotically independent. Making this precise and showing that in fact one can really take $k_n = k_n'$ is the content of the rest of the section. It will require quite a few technical steps, but luckily for us, these can be transferred with minor modifications from \cite{AiSh}. 

The first step is to get a rough upper bound of the right order:
	\begin{lemma}\label{lem::secondmoment_rough}
		\begin{equation}
		\label{eqn::secondmoment_rough}
		\qb{\left( \frac{\tilde K_n^{F,\eta}}{\tilde D_n^\eta}\right)^2}=O\left (\frac{1}{n}\right ).
		\end{equation} 
	\end{lemma}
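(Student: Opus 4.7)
The strategy will be to recast the second moment as a two-point function under a measure with two independent roots, and then to control it by decomposing according to the first time at which the two roots lie in distinct components of $A_m$.

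Starting from \eqref{eqn::conditionalexp}, write $X := \tilde K_n^{F,\eta}/\tilde D_n^\eta = \qb{F(S_n(Z)/\sqrt{n})/h_1(S_n(Z)+2\eta) \mid \F_{A_n}}$, and introduce a measure $\mathbb{Q}_\eta^{**}$ on the enlarged space $(\Gamma, Z, Z')$ under which the marginal of $\Gamma$ is $\mathbb{Q}_\eta$ and, conditionally on $\Gamma$, the two marks $Z, Z'$ are independent with common density $\rho(\cdot, \F_{A_n})$ as in observation~(3). Squaring $X$ and using Fubini gives
\[
\qb{X^2} = \mathbb{Q}_\eta^{**}\!\left[\frac{F(S_n(Z)/\sqrt n)F(S_n(Z')/\sqrt n)}{h_1(S_n(Z)+2\eta)\,h_1(S_n(Z')+2\eta)}\right],
\]
and since $|F|\leq C$ and $h_1\geq 1$, it suffices to prove that $I_n := \mathbb{Q}_\eta^{**}\!\left[1/(h_1(S_n(Z)+2\eta)\,h_1(S_n(Z')+2\eta))\right]$ is $O(1/n)$.

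Let $N = N(Z, Z') \in \{1, \dots, n+1\}$ denote the first $m$ at which $Z$ and $Z'$ lie in distinct connected components of $\D\setminus A_m$. The plan is to split $I_n = \sum_{m=1}^{n+1} \mathbb{Q}_\eta^{**}\!\left[\I{N=m}/(h_1(S_n(Z)+2\eta)\,h_1(S_n(Z')+2\eta))\right]$. On $\{N=m\}$, the spatial Markov property together with observation~(4) imply that, conditionally on $\F_{A_m}$, the walks $(S_k(Z))_{k \geq m}$ and $(S_k(Z'))_{k \geq m}$ are independent Doob $h$-transforms (by $h_1(\cdot + 2\eta)$) of centered, variance-$2$ random walks. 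A Kemperman/renewal-type estimate, combined with the asymptotics $h_1(u) \sim c_0 u$ from \eqref{eqn::renewal_thm}, should yield uniformly in $S_m(Z) \geq -2\eta$,
\[
\mathbb{Q}_\eta^{**}\!\left[\frac{1}{h_1(S_n(Z)+2\eta)} \ \Big|\ \F_{A_m},\,N = m\right] \leq \frac{C'}{\sqrt{n - m + 1}},
\]
and the analogous bound for $Z'$. Conditional independence then gives $I_n \leq C'' \sum_{m=1}^{n+1} \mathbb{Q}_\eta^{**}(N = m)/(n - m + 1)$.

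The hardest part will be to control the distribution of the splitting time $N$ under $\mathbb{Q}_\eta^{**}$ and verify that this sum is $O(1/n)$. This amounts to a many-to-two computation, in which $\mathbb{Q}_\eta^{**}(N=m)$ is expressed as a weighted expectation over two rooted trajectories that share a common history up to time $m$ and then evolve independently. Following the corresponding arguments in \cite[Section 4]{AiSh} and using the change-of-measure structure described above, the weights produce the same combinatorics as in the branching random walk setting; translating those estimates into the local set / GFF framework should show that the dominant contribution to the sum comes from $m$ near $0$, which will yield $I_n = O(1/n)$ as desired.
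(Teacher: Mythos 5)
Your proposal takes a genuinely different and substantially harder route than the paper does, and it contains a gap at a key step. The paper's proof is a two-line application of conditional Jensen's inequality to \eqref{eqn::conditionalexp}: since $\tilde K_n^{F,\eta}/\tilde D_n^\eta = \qb{Y\mid\F_{A_n}}$ with $Y = F(S_n(Z)/\sqrt{n})/h_1(S_n(Z)+2\eta)$, one has
$\qb{\left(\tilde K_n^{F,\eta}/\tilde D_n^\eta\right)^2}\le \qb{Y^2}\le \|F\|_\infty^2\,\qb{h_1(S_n(Z)+2\eta)^{-2}}$,
which is a \emph{one-point} quantity. The $O(1/n)$ bound then follows directly by disintegrating over $Z$, passing to $\hat{\mathbb{P}}_z^*$ via the Doob weight \eqref{eqn::doob_weight}, using $h_1(u)\ge R(1+u)$ from \eqref{eqn::h_1_bounds}, and then invoking the persistence estimate \eqref{eqn::persistency} together with the meander convergence \eqref{eqn::donsker_law}: each of $\hat{\mathbb{P}}_z^*(E_\eta(n,z))$ and $\hat{\mathbb{P}}_z^*[(1+S_n(z)+2\eta)^{-1}\mid E_\eta(n,z)]$ contributes $O(n^{-1/2})$, uniformly over $z\in\Os\subset r\D$. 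No bi-rooted decomposition is needed.

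The gap in your version is the claim that on $\{N=m\}$, conditionally on $\F_{A_m}^*$, the two walks $(S_k(Z))_{k\ge m}$ and $(S_k(Z'))_{k\ge m}$ are \emph{independent} Doob $h$-transforms under $\mathbb{Q}_\eta^{**}$. Observation~(4) of Section~\ref{sec:rm} provides this structure only for the \emph{single}-root measure $\mathbb{Q}_{\eta,z}^*$, and it does not carry over to your two-root measure. Indeed, writing out the Radon--Nikodym derivative of $\mathbb{Q}_\eta^{**}$ against $\mathbb{P}\otimes dz\otimes dz'$ on $\F_{A_n}$, the factor $\tilde D_n^\eta/\tilde D_0^\eta$ from \eqref{eqn::q_eta_com} multiplied by the product of the two conditional densities of $Z$ and $Z'$ from observation~(3) leaves a residual $1/\tilde D_n^\eta$: this is a nontrivial $\F_{A_n}$-measurable weight that involves the integral over \emph{all} of $\Os$, and hence continues to couple $Z$ and $Z'$ even after their components have separated. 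Relatedly, the product of the two conditional densities is not a $\mathbb{Q}_\eta$-martingale in $n$ (the product of two martingales is not a martingale), so your $\mathbb{Q}_\eta^{**}$ is not consistently defined across the filtration, and restricting the time-$n$ measure to $\F_{A_m}^*$ does not return a bi-rooted measure of the same form. Finally, you explicitly leave the estimate on the splitting-time distribution undone. None of this machinery is required once one notices the Jensen step.
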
 

\begin{proofof}{Lemma \ref{lem::secondmoment_rough}}
Using (\ref{eqn::conditionalexp}) and Jensen's inequality, one sees that
	\begin{align}\label{eqn::condsqexp1}\qb{\left( \frac{\tilde K_n^{F,\eta}}{\tilde D_n^\eta}\right)^2}
	&	\le \|F\|_\infty^2 \qb{ \frac{1}{[h_1(S_n(Z)+2\eta)]^2}} \nonumber \\
	&	\leq \|F\|_\infty^2 \int_z \qa{\frac{1}{[h_1(S_n(z)+2\eta)]^2}} \qb{dz}.\end{align}
	Then, using the fact that $h_1(S_n(z)+2\eta)\geq R(1+S_n(z)+2\eta)$ by \eqref{eqn::h_1_bounds}, we can write 
	\[\qa{\frac{1}{[h_1(S_n(z)+2\eta)]^2}} \le  R^{-2} \hat{ \mathbb P}^*_z(\frac{1}{1+S_n(z)+2\eta}\mid E_\eta(n,z))\, \hat{ \mathbb P}^*_z(E_\eta(n,z))\] for all $z$. 
Applying \eqref{eqn::persistency}, as in the proof of \eqref{eqn::firstmoment}, we see that \[\hat{ \mathbb P}^*_z(E_\eta(n,z)) =\frac{\theta(1+o(1))}{\sqrt{n}}\] where again, since $\Os \subset r\D$, the $o(1)$ term is uniform over $z\in \Os$. By the convergence of $n^{-1/2}(S_{\lfloor t/n\rfloor}(z) - S_0(z))$ to a Brownian meander, \eqref{eqn::donsker_law}, we also see that the first term is order $n^{-1/2}$, uniformly over $z\in \Os$.
Plugging into \eqref{eqn::condsqexp1} and applying dominated convergence, we obtain the lemma.
\end{proofof}
\medskip 

This means that we need only prove the second moment bound on events of high probability. More precisely: 
	\begin{lemma}
		\label{lemma::En_sufficient}
	Suppose we can find a sequence of events $\Ev_n$ with $\mathbb{Q}_\eta^*(\Ev_n)\to 1$ as $n\to \infty$, and such that 
	\begin{equation}\label{eqn::condsqexp}\qb{\frac{\tilde K_n^{F,\eta}}{\tilde D_n^\eta} \frac{\1_{\Ev_n} F(S_n(Z)/\sqrt{n})}{h_1(S_n(Z)+2\eta)}} \leq \frac{(\theta\E{F(\sqrt{2}R_1)})^2}{n}+o\left(\frac{1}{n}\right).\end{equation} 
 Then the second moment bound \eqref{eqn::secondmoment} holds.
	\end{lemma}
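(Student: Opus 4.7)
The plan is to split the squared expectation on the left-hand side of~\eqref{eqn::secondmoment} according to whether $\Ev_n$ or $\Ev_n^c$ holds. First I would invoke identity~\eqref{eqn::K/D2} to rewrite
\[
\qb{\bigl(\tilde K_n^{F,\eta}/\tilde D_n^\eta\bigr)^{2}}
= \qb{(\tilde K_n^{F,\eta}/\tilde D_n^\eta)\,\tfrac{F(S_n(Z)/\sqrt{n})}{h_1(S_n(Z)+2\eta)}},
\]
and decompose the right-hand side as $I_n^{+}+I_n^{-}$ by inserting $\1_{\Ev_n}+\1_{\Ev_n^c}$. The hypothesis of the lemma immediately gives $I_n^{+}\le (\theta\E{F(\sqrt 2 R_1)})^2/n + o(1/n)$, and what remains is to show that $I_n^{-}=o(1/n)$.

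For $I_n^-$, I would apply the Cauchy--Schwarz inequality to obtain
\[
I_n^{-} \le \qb{\bigl(\tilde K_n^{F,\eta}/\tilde D_n^\eta\bigr)^{2}}^{1/2}\,\qb{F^2/h_1^2\,\1_{\Ev_n^c}}^{1/2}.
\]
By Lemma~\ref{lem::secondmoment_rough} the first factor is $O(n^{-1/2})$, so it suffices to prove $\qb{F^2/h_1^2\,\1_{\Ev_n^c}}=o(1/n)$. To achieve this, I would introduce a truncation at level $T>0$ and split $\Ev_n^c$ according to whether $S_n(Z)>T\sqrt n$ or $S_n(Z)\le T\sqrt n$. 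On the former event the renewal asymptotics $h_1(u)\sim c_0 u$ yield $h_1(S_n(Z)+2\eta)\ge cT\sqrt n$ for $n$ large, so the contribution is at most $c\,\|F\|_\infty^2\,T^{-2}n^{-1}\,\mathbb{Q}_\eta^*(\Ev_n^c)=o(1/n)$ using the assumption $\mathbb{Q}_\eta^*(\Ev_n^c)\to 0$. On the latter event I would instead combine the weak convergence of $S_n(Z)/\sqrt n$ to $\sqrt{2}R_1$ under $\mathbb{Q}_\eta^*$ established in~\eqref{eqn::donsker_law} with the rough estimate $\qb{1/h_1^2}=O(1/n)$ obtained inside the proof of Lemma~\ref{lem::secondmoment_rough}, so as to make the contribution $o(1/n)$ by first letting $n\to\infty$ and then $T\to 0$.

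\textbf{Main obstacle.} The delicate step will be controlling $\qb{h_1^{-2}\,\1_{\{S_n(Z)\le T\sqrt n\}}}$ as $T\to 0$, since naive bounds on $\qb{1/h_1^2}$ alone are not enough to gain the extra $o(1)$ factor one needs beyond the rough $O(1/n)$ estimate. The key is to match the linear growth $h_1(u)\asymp u$ near zero against the fact that, via the Doob $h$-transform representation~\eqref{eqn::doob_weight}, the law of $S_\cdot(Z)$ under $\mathbb{Q}_{\eta,z}^*$ is that of the random walk conditioned to stay above $-2\eta$, so that $S_n(Z)/\sqrt n$ has a Brownian meander limit whose density vanishes at the origin. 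This precise balance of Rayleigh--type tail near zero with the linear behaviour of $h_1$ is exactly the technical point handled in the proof of the corresponding lemma in~\cite{AiSh}, and I expect to transcribe their argument into the present local-set setting.
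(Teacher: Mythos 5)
Your proposal is correct and follows essentially the same route as the paper: the same decomposition via $\1_{\Ev_n}+\1_{\Ev_n^c}$ of the identity \eqref{eqn::K/D2}, the same Cauchy--Schwarz reduction (using Lemma \ref{lem::secondmoment_rough}) to showing $\qb{\1_{\Ev_n^c}h_1(S_n(Z)+2\eta)^{-2}}=o(1/n)$, and the same final split on whether $S_n$ exceeds a small multiple of $\sqrt{n}$, with the large-$S_n$ part controlled by $h_1(u)\gtrsim u$ together with $\mathbb{Q}_\eta^*(\Ev_n^c)\to 0$ and the small-$S_n$ part controlled by the meander convergence before sending the truncation parameter to zero. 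The "main obstacle" you flag is exactly the point the paper handles by invoking the argument of Lemma \ref{lem::secondmoment_rough} and the vanishing of the meander density at the origin, so your plan matches the published proof.
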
 
The proof of this lemma is also relatively direct: 
\medskip

\begin{proofof}{Lemma \ref{lemma::En_sufficient}}
By (\ref{eqn::conditionalexp}) we can write 
	\begin{align*}\qb{\left(\frac{\tilde{K}_n^{F,\eta}}{\tilde{D}_n^{\eta}}\right)^2} =\qb{\frac{\tilde{K}_n^{F,\eta}}{\tilde{D}_n^\eta}\frac{\1_{\Ev_n}F(S_n(Z)/\sqrt{n})}{h_1(S_n(Z)+2\eta)}}+\qb{\frac{\tilde{K}_n^{F,\eta}}{\tilde{D}_n^\eta}\frac{\1_{\Ev_n^c}F(S_n(Z)/\sqrt{n})}{h_1(S_n(Z)+2\eta)}}.\end{align*}
	As by assumption the first term is smaller or equal than $ n^{-1}(\theta\E{F(\sqrt{2}R_1)})^2+o\left(1/n\right)$, it suffices to show that the second term above is $o(1/n)$ as $n\to \infty$. By Cauchy--Schwarz, Lemma \ref{lem::secondmoment_rough} and boundedness of $F$, it is enough to show the same for \begin{equation}\label{eqn::e_n_suff_def}\qb{\1_{\Ev_n^c}(h_1(S_n(Z)+2\eta)^{-2}}.\end{equation} 
	For this, consider the event $F_\eps = \{S_n - S_0 \geq \eps n^{1/2}\}$ and write \eqref{eqn::e_n_suff_def}
as the sum
\[ \qb{\I{F_\eps}\I{\Ev_n^c}(h_1(S_n(Z)+2\eta)^{-2}}  + \qb{\I{F_\eps^c}\I{\Ev_n^c}(h_1(S_n(Z)+2\eta)^{-2}}. \]
	Using that $h_1(u) \geq R(1+u)$ and that $S_0(z)+2\eta\ge 0$ under $\mathbb{Q}_{\eta,z}^*$, we can bound the first term above by $c(1+\eps^2n)^{-1}\qb{\I{\Ev_n^c}}$. Moreover, as in Lemma \ref{lem::secondmoment_rough} we can now use the convergence to the Brownian meander and the fact that $z \in r\D$ to bound the second term by $n^{-1}c'\eps$ for some absolute constant $c'$. As $\eps$ can be taken arbitrarily small, the lemma follows.

\end{proofof}
\\

It thus remains to find a suitable sequence of events $\Ev_n$: such that $\mathbb{Q}_\eta^*(\Ev_n)\to 1$ and that \eqref{eqn::condsqexp} holds. As hinted at in the heuristic discussion before Lemma \ref{lem::secondmoment_rough}, the idea is to find an appropriate separation of scales. We will show that $k_n = k_n' = n^{1/3}$ is the right choice.

Indeed, pick $k_n = n^{1/3}$ and decompose $\tilde K_n^{F,\eta}$ and $\tilde D_n^\eta$ by writing \[\tilde K_n^{F,\eta}=\tilde K_n^{F,\eta,k_n^-}+\tilde K_n^{F,\eta, k_n^+} \text{ ; and } \tilde D_n^\eta = \tilde D_n^{\eta, k_n^+}+\tilde D_n^{\eta, k_n^-}\]
	where the superscript $k_n^+$ refers to the integral over $\mathcal{B}_{k_n}$ and the superscript $k_n^-$ refers to the integral over $\Os\setminus \mathcal{B}_{k_n}$ (recall that $\mathcal{B}_{k_n}$ was defined to be the connected component of $\D\setminus A_n$ containing $Z$).
	
	We now define our sequence of events $\Ev_n$ by setting $\Ev_n=\Ev_n^1\cap \Ev_n^2$, where
	\[ \Ev_n^1 := \{\tilde D_n^{\eta,k_n^+}\leq 1/n^2\}; \;\; \Ev_n^2 = \{S_{k_n}(Z)\in [k_n^{1/3},k_n]\}. \] 
	Under $\mathbb{Q}_{\eta,z}^*$, the walk $S_n(z)$ is a random walk with finite variance increments, conditioned to stay above $-2\eta$. It then follows that it will with high probability stay inside a window $[n^{1/3},n]$ for $n$ large enough. Hence we obtain:	
\begin{lemma}\label{lemma::e_n_2} 
	$\qb{\Ev_n^2} \to 1 $ as $n\to \infty$.
	\end{lemma}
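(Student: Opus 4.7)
The plan is to reduce the statement to a claim about the rescaled walk $k_n^{-1/2}S_{k_n}(Z)$ under the rooted measure, and then invoke the invariance principle \eqref{eqn::donsker_law}. First I would use observation~(1) of Section~\ref{sec:rm}, which gives that $Z$ has a bounded density on $\Os$ under $\mathbb{Q}_\eta^*$. By dominated convergence it then suffices to show $\qex(\Ev_n^2)\to 1$ uniformly in $z\in \Os$. This is the natural reformulation because under the conditional law $\qex$, $(S_n(z))_{n\ge 0}$ is a concrete Markov chain: the mean-zero, variance-$2$ random walk (Remark~\ref{rmk::variance}), started from $S_0(z)=4\log\CR^{-1}(z,\D)$ and Doob-transformed (via $h_1$) to stay above $-2\eta$ forever.

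Next, since $\Ev_n^2\in \F_{A_{k_n}}$, I would apply \eqref{eqn::doob_weight} at time $k_n$ to rewrite
\[\qex(\Ev_n^2) \;=\; \hat{\mathbb{P}}^*_z\!\left[\frac{h_1(S_{k_n}(z)+2\eta)}{h_1(S_0(z)+2\eta)}\ind{z}{k_n}\I{k_n^{1/3}\le S_{k_n}(z)\le k_n}\right].\]
The three key asymptotic inputs are then: (a) the invariance principle \eqref{eqn::donsker_law}, which says that conditionally on $E_\eta(k_n,z)$, $k_n^{-1/2}(S_{k_n}(z)-S_0(z))\Rightarrow \sqrt{2}R_1$, strictly positive and finite a.s.; (b) the renewal asymptotic $h_1(u)\sim c_0 u$ from \eqref{eqn::renewal_thm}; and (c) the persistency estimate $\hat{\mathbb{P}}^*_z(E_\eta(k_n,z))\sim \theta k_n^{-1/2}$ from \eqref{eqn::persistency}. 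Combining these, the weight $h_1(S_{k_n}(z)+2\eta)/h_1(S_0(z)+2\eta)$ is of order $\sqrt{k_n}\cdot R_1$ on the scaling event, while the persistency gives the matching factor $k_n^{-1/2}$; hence under $\qex$ the variable $k_n^{-1/2}S_{k_n}(z)$ converges in distribution to a strictly positive, finite random variable (morally $\sqrt{2}R_1$ size-biased by itself).

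The conclusion is then automatic: since $k_n^{1/3}/\sqrt{k_n}=k_n^{-1/6}\to 0$ and $k_n/\sqrt{k_n}=\sqrt{k_n}\to \infty$, we have
\[\qex(\Ev_n^2)\;=\;\qex\!\left(k_n^{-1/2}S_{k_n}(z)\in [k_n^{-1/6},\sqrt{k_n}]\right)\;\to\; 1.\]
The main obstacle, and essentially the only thing to check with care, is that this convergence is uniform in $z\in \Os$. This is handled by the compactness $\Os\subset r\D$: it gives a uniform bound on $S_0(z)=4\log \CR^{-1}(z,\D)$, which in turn yields uniformity in the Bolthausen-type invariance principle and in the renewal asymptotics applied with varying starting point. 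Integrating against the bounded marginal density of $Z$ on $\Os$ then finishes the proof.
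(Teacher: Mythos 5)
Your proposal is correct and takes essentially the same approach the paper indicates (the paper gives no detailed proof, only the remark that under $\qex$ the walk $(S_n(z))_n$ is a finite-variance random walk conditioned to stay above $-2\eta$, so that at time $k_n$ it is of order $\sqrt{k_n}$ and hence, since $k_n^{1/3}\ll \sqrt{k_n}\ll k_n$, lies in the window with high probability). You correctly identify the Doob $h$-transform representation via \eqref{eqn::doob_weight}, the meander invariance principle \eqref{eqn::donsker_law} together with \eqref{eqn::renewal_thm} and \eqref{eqn::persistency} to get that $k_n^{-1/2}S_{k_n}(z)$ under $\qex$ converges to a positive finite limit (the size-biased meander, i.e.\ the time-one marginal of a Bessel(3)), and the uniformity over $z$ via the uniform bound on $S_0(z)$ on $\Os\subset r\D$. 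One small remark: you do not need both ``uniformity in $z$'' and dominated convergence — pointwise convergence of $\qex(\Ev_n^2)$ together with the trivial bound by $1$ already suffices to integrate against the $\mathbb{Q}_\eta^*$-marginal of $Z$ — but that is a matter of presentation, not a gap.
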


We next claim that conditioned on $\Ev_n^2$ the event $\Ev_n^1$ also happens with large probability.
		\begin{lemma}\label{lemma::e_n} There exists a deterministic sequence $p_n\nearrow 1$ such that $\1_{\Ev_n^2} \qb{ \Ev_n^1 \mid \F^*_{A_{k_n}}} \geq p_n\1_{\Ev_n^2} $ almost surely. Here  $\mathcal{F}_{A_{k_n}}^*$ is the $\sigma$-algebra generated by $\mathcal{F}_{A_{k_n}}$ together with $\sigma(Z)$
	\end{lemma}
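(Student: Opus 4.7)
By the conditional Markov inequality applied to $\tilde D_n^{\eta,k_n^+}$, it suffices to produce a deterministic $r_n \to 0$ such that
\[ \qb{\tilde D_n^{\eta,k_n^+} \mid \F^*_{A_{k_n}}} \1_{\Ev_n^2} \leq \tfrac{r_n}{n^2}\, \1_{\Ev_n^2} \quad \mathbb{Q}_\eta^*\text{-a.s.}, \]
and then set $p_n := 1 - r_n$. To evaluate this conditional expectation, write $R_m(z) := h_1(S_m(z)+2\eta)\, e^{m - S_m(z)/2}\, \I{E_\eta(m,z)}$, the per-point positive $\P$-martingale whose integral over $\Os$ gives $\tilde D_m^\eta(\Os)$. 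Applying the Radon-Nikodym formula for $\mathbb{Q}_\eta^*$ from Section \ref{sec:rm} (and noting $\Ev_n^2 \subseteq \{E_\eta(k_n,Z) \text{ holds}\}$), the conditional expectation rewrites as
\[ \qb{\tilde D_n^{\eta,k_n^+} \mid \F^*_{A_{k_n}}} = \frac{1}{R_{k_n}(Z)} \int_{\mathcal{B}_{k_n}} \mathbb{E}_{\P}\!\left[R_n(w)\,R_n(Z) \mid \F_{A_{k_n}}, Z\right] dw. \]

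For the two-point integrand, introduce the separation time $\sigma_{w,Z} := \inf\{m \geq k_n : w \text{ and } Z \text{ lie in distinct components of } \D \setminus A_m\}$. By the spatial Markov property of the GFF, the independence of further FPS iterations in disjoint components after $\sigma_{w,Z}$, and the martingale property of $R_m(\cdot)$, the inner expectation equals $\mathbb{E}_{\P}[R_{\sigma_{w,Z}\wedge n}(w)\, R_{\sigma_{w,Z}\wedge n}(Z) \mid \F_{A_{k_n}}, Z]$. Equivalently, using the Doob transform $d\tilde\P := (R_n(Z)/R_{k_n}(Z))\, d\P$, this can be rewritten as $R_{k_n}(Z) \cdot \mathbb{E}_{\tilde\P}[R_{\sigma_{w,Z}\wedge n}(w) \mid \F_{A_{k_n}}, Z]$, where under $\tilde\P$ the walk $S_m(Z)$ is conditioned to stay above $-2\eta$ and $S_m(w)$ is coupled to $S_m(Z)$ until separation. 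I would then use conformal invariance (mapping $\mathcal{B}_{k_n}$ to $\D$ with $Z \mapsto 0$) to reduce this to a fresh FPS iteration in $\D$, with initial walk level $S_{k_n}(Z) \in [k_n^{1/3}, k_n]$, for which the renewal/persistency estimates of Appendix \ref{sec::appendix} apply.

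Finally, combine with the two-sided size estimates on $\Ev_n^2$: by Koebe's quarter theorem, $\mathrm{Area}(\mathcal{B}_{k_n}) \leq 16\pi\, \CR(Z,\mathcal{B}_{k_n})^2 = 16\pi\, e^{-(S_{k_n}(Z)+2k_n)/2} \leq 16\pi\, e^{-k_n - k_n^{1/3}/2}$, while the matching lower bound $R_{k_n}(Z) \geq h_1(2\eta)\, e^{k_n/2}$ holds. Plugging these in, together with polynomial upper bounds on the two-point expectation coming from the renewal analysis, yields a quantity that decays super-polynomially in $n$ with $k_n = n^{1/3}$; in particular, the resulting $r_n$ is $o(1)$ as required.

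\textbf{Main obstacle.} The technical heart is the two-point renewal analysis, specifically controlling $\mathbb{E}_{\P}[R_{\sigma_{w,Z}}(w)\, R_{\sigma_{w,Z}}(Z)]$ uniformly as $w$ approaches $Z$ inside $\mathcal{B}_{k_n}$: when $w$ is close to $Z$ the separation time $\sigma_{w,Z}$ is typically large and the joint walks $(S_m(w), S_m(Z))$ remain tightly correlated. Unlike in the BRW setting of \cite{AiSh}, the walks differ even before separation (by the conformal-radius ratio inside the common component); but the conformal invariance of each FPS iteration serves as a structural surrogate for the explicit branching, and lets us import the corresponding two-particle estimates with the same renewal bounds on the persistency events.
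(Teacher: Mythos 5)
Your overall framework — conditional Markov's inequality applied to $\tilde D_n^{\eta,k_n^+}$, the rewriting of $\mathbb{Q}_\eta^*$-conditional expectations through the Radon--Nikodym density, the reduction to a ``separation-time'' decomposition via the martingale property of $R_m(\cdot)$ after the point $w$ separates from $Z$ — is in the same spirit as the paper, and the identity
\[ \qb{\tilde D_n^{\eta,k_n^+} \mid \F^*_{A_{k_n}}} = \frac{1}{R_{k_n}(Z)} \int_{\mathcal{B}_{k_n}} \mathbb{E}_{\P}\!\left[R_n(w)\,R_n(Z) \mid \F_{A_{k_n}}, Z\right] dw \]
is correct. (Indeed, the paper's decomposition of $\mathcal{B}_{k_n}$ into $\bigcup_j (\mathcal{B}_j\setminus\mathcal{B}_{j+1})$ and its conditioning on brother loops is exactly the separation-time decomposition written out explicitly.) However, there is a genuine gap in what comes next.

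You claim that on $\Ev_n^2$ alone one has a deterministic bound
\[ \qb{\tilde D_n^{\eta,k_n^+}\mid\F^*_{A_{k_n}}}\1_{\Ev_n^2}\leq \frac{r_n}{n^2}\1_{\Ev_n^2} \]
with $r_n\to 0$. This is not established, and is in fact not expected to hold. The quantity $\mathbb{E}_{\P}[R_{\sigma\wedge n}(w)R_{\sigma\wedge n}(Z)\mid\F_{A_{k_n}},Z]$ contains the factor $e^{2(\sigma\wedge n)}$, and the contribution of the level where $w$ and $Z$ separate is roughly of size $(1+S_j)e^{-S_j}$ times polynomial factors in the aspect ratio of $\mathcal{B}_j$. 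If after level $k_n$ the walk $S_j(Z)$ returns close to $-2\eta$ (which is allowed by the conditioning and happens with positive $\mathbb{Q}_\eta$-probability), or if $\mathcal{B}_j$ develops very elongated shape, these contributions do not decay in $n$, and in particular the estimate you need cannot be true conditionally on $\Ev_n^2$ alone. Your ``main obstacle'' paragraph names the hard step but does not resolve it; ``conformal invariance serves as a structural surrogate'' is not a proof, and the explicit two-point/separation-time estimate you would need to carry out is exactly where the argument must be truncated further. Your final numerical estimate only controls the prefactor $\mathrm{Area}(\mathcal{B}_{k_n})/R_{k_n}(Z)$, and implicitly treats the two-point expectation as $O(1)$, which it is not.

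The paper circumvents this by introducing two additional events, $\Ev_n^3 = \bigcap_{k_n\le j\le n}\{S_j(Z)\ge k_n^{1/6}\}$ and $\Ev_n^4 = \bigcap_{k_n\le j\le n}\{\sup_{w\in\mathcal{B}_j}|Z-w|\le j^c\,\CR(Z,\mathcal{B}_j)\}$, shows each has conditional $\mathbb{Q}_\eta^*$-probability (given $\F_{A_{k_n}}^*$, on $\Ev_n^2$) tending to $1$, and only bounds $\qb{\tilde D_n^{\eta,k_n^+}\1_{\Ev_n^3\cap\Ev_n^4}\mid\F_{A_{k_n}}^*}$. On $\Ev_n^3$ the walk bound gives the exponential factor $e^{-k_n^{1/6}/2}$, and on $\Ev_n^4$ the K\"{o}ebe-type geometric control gives the polynomial factor $j^{O(c)}$; combining these over $k_n\le j\le n$ yields $O(n^{4c+1}e^{-k_n^{1/6}/2}) = o(n^{-2})$, after which the conditional Markov inequality finishes the lemma. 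You would need to add these truncations (or an equivalent) to make your second-moment estimate work; as written, the key bound is unproven and likely false without them.
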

We will postpone the proof of this lemma. For now, observe that since $\Ev_n^2$ is $\mathcal{F}_{A_{k_n}}^*$ measurable, the combination of Lemmas \ref{lemma::e_n_2} and \ref{lemma::e_n} imply that $\qb{\Ev_n}\to 1$ as $n\to \infty$ . 

Hence it remains to  prove \eqref{eqn::condsqexp}. Using positivity of the integrands defining $\tilde{K}_n^{F,\eta}$ and $\tilde{D}_n^\eta$, the first step is to bound the left hand-side of \eqref{eqn::condsqexp} by 
	\begin{equation}\label{eqn::decomp_secondmomentbound}\qb{\frac{\tilde K_n^{F,\eta, k_n^+}}{\tilde D_n^\eta}\frac{\1_{\Ev_n^1}F(S_n(Z)/\sqrt{n})}{h_1(S_n(Z)+2\eta)}} + \qb{\frac{\tilde K_n^{F,\eta,k_n^-}}{\tilde D_n^{\eta,k_n^-}}\frac{\1_{\Ev_n}F(S_n(Z)/\sqrt{n})}{h_1(S_n(Z)+2\eta)}}.\end{equation}
Next, on the event $\Ev_n^1$, we have that \[0\le \tilde K_n^{F,\eta, k_n^+} \leq  \|F\|_\infty\tilde D_n^{\eta, k_n^+} \leq \|F\|_\infty/n^2.\]
Using the definition of $\mathbb Q^*_\eta$ and bounding $h_1 \geq 1$ we see that the first term of \eqref{eqn::decomp_secondmomentbound} is smaller than $n^{-2}$ times $\|F\|_\infty^2/D_0^\eta(\Os)$. Since we also have $D_0^\eta(\Os) > 0$, the first term is of order $o(1/n)$.

For the second term, we use that the two ratios in the expectation are conditionally independent given $\F^*_{A_{k_n}}$. This means that we can write 
	\begin{align}
	\qb{\frac{\tilde K_n^{F,\eta,k_n^-}}{\tilde D_n^{\eta,k_n^-}}\frac{\1_{\Ev_n}F(S_n(Z)/\sqrt{n})}{h_1(S_n(Z)+2\eta)} \mid \F^*_{A_{k_n}}} \nonumber 
	\end{align}
	as
	\begin{align}
	 \qb{\frac{\tilde K_n^{F,\eta,k_n^-}}{\tilde D_n^{\eta,k_n^-}} \mid \F^*_{A_{k_n}}} \1_{\Ev_n^1} \1_{\Ev_n^2} \qb{  \frac{F(S_n(Z)/\sqrt{n})}{h_1(S_n(Z)+2\eta)} \mid \F^*_{A_{k_n}}}. 
	\end{align} We then have, by (the comment following) \eqref{eqn::persistency}, and the proof of \eqref{eqn::firstmoment}, that \[\1_{\Ev_n^2}\qb{\frac{F(S_n(Z)/\sqrt{n})}{h_1(S_n(Z)+2\eta)}\mid \F^*_{A_{k_n}}}=\left( \frac{\theta\E{F(\sqrt{2}R_1)}}{\sqrt{n}}+o(1/\sqrt{n})\right)\1_{\Ev_n^2},\] where the $o(1/\sqrt{n})$ is deterministic. It therefore remains to prove that \begin{equation}
	\label{eqn::firstcondexp}
	\qb{\frac{\tilde{K}_n^{F,\eta,k_n^-}}{\tilde{D}_n^{\eta,k_n^-}} \1_{\Ev_n}}\leq \frac{\theta\E{F(\sqrt{2}R_1)}}{\sqrt{n}}+o(1/\sqrt{n}).\end{equation}
	To do this, we break  up (\ref{eqn::firstcondexp}) as
	\[\qb{\frac{\tilde{K}_n^{F,\eta,k_n^-}}{\tilde{D}_n^{\eta,k_n^-}} \1_{\Ev_n}\I{\tilde{D}_n^\eta > 1/n}}+ \qb{\frac{\tilde{K}_n^{F,\eta,k_n^-}}{\tilde{D}_n^{\eta,k_n^-}} \1_{\Ev_n}\I{\tilde{D}_n^\eta \le 1/n}}.\]
	Note that $\tilde{K}_n^{F,\eta,k_n^-}$ is smaller than $\|F\|_\infty\tilde{D}_n^{\eta,k_n^-}$. It therefore follows that we can bound the second term by $(\tilde{D}_0^\eta)^{-1} \E{\tilde{D}_n^\eta \I{\tilde{D}_n^\eta \le 1/n}}$, which is again  $o(1/\sqrt{n})$ since $\tilde D_0^\eta$ is non-zero.
	
	Moreover, on the event $\Ev_n \cap \{\tilde{D}_n^\eta >1/n\}$ we have $\tilde{D}_n^\eta/\tilde{D}_n^{\eta,k_n^-}=1+O(1/n)$, and so we see that the first term of (\ref{eqn::firstcondexp}) is less than or equal to $(1+O(1))$ times the first moment in \eqref{eqn::firstmoment}. Since we already know that this is $\theta \E{F(\sqrt{2}R_1)}/\sqrt{n}+o(1/\sqrt{n})$, the proof is complete. 
\end{proofof}
\medskip

It is only in the proof of the final lemma, Lemma \ref{lemma::e_n}, that we need to do a bit of extra work over that already done in \cite{AiSh}. This comes from the fact that, unlike in the classical setting of multiplicative cascades, the sets $A_n$ at the $n-$th level have different shapes and sizes. In this lemma, we can however use the work of \cite{Ai}.
	\medskip
	
\begin{proofof}{Lemma \ref{lemma::e_n}} Define further events $\Ev_n^3$ and $\Ev_n^4$ by setting
	\[\Ev_n^3 =  \cap_{k_n\leq j\leq n} \{S_j(Z)\geq k_n^{1/6}\}; \;\; \Ev_n^4=\cap_{k_n\leq j\leq n} \{\text{sup}_{w\in \mathcal{B}_j} |Z-w|\leq j^{c}\CR(Z,\mathcal{B}_j)\}\]
	where $\mathcal B_j$ is the connected component of $D\backslash A_{j}$ containing $Z$ and $c>0$ is some fixed constant to be chosen just below (see also \cite[Lemma 3.5]{Ai}). We argue that: 
	\begin{enumerate}[(i)]
		\item $\1_{\Ev_n^2}\qb{ \Ev_n^3 \mid \F^*_{A_{k_n}}} \geq p_n\1_{\Ev_n^2}$, where $p_n\to 1$ is deterministic;
		\item $\1_{\Ev_n^2}\qb{ \Ev_n^4 \mid \F^*_{A_{k_n}}}\geq q_n\1_{\Ev_n^2}$, where $q_n\to 1$ is deterministic; and finally
		\item $\qb{\tilde D_n^{\eta, k_n^+}\1_{\Ev_n^3\cap \Ev_n^4} \mid \F^*_{A_{k_n}}}\leq r_n$ where $r_n=o(1/n^2)$ is deterministic.
	\end{enumerate}
	This proves the lemma by conditional Markov's inequality.
	
	Statement (i) follows from the fact that under the given conditional law, $(S_j(Z)-S_{k_n}(Z); \; j\geq k_n)$ is a centered random walk conditioned to stay above $-S_{k_n}(Z)+2\eta$. 
	
	Statement (ii) follows from the proof of \cite[Lemma 3.5]{Ai}. This proof shows that for $c$ large enough, 
	\[\qb{\text{sup}_{w\in \mathcal{B}_j} |Z-w| > j^c \CR(Z,\mathcal{B}_j) \mid \F^*_{A_{k_n}}}\leq c'j^{-2},\]
	for some constant $c'$ (note the right-hand side is deterministic.)  
	
For (iii), we bound
		$\qb{\tilde D_n^{\eta, k_n^+}\1_{\Ev_n^3\cap \Ev_n^4} \mid \F^*_{A_{k_n}}} $ above by 
		\begin{align}\label{eqn::brotherloops}
		 \qb{\1_{\Ev_n^3\cap \Ev_n^4}  \int_{\mathcal{B}_n} h_1(-2n+4\log \CR^{-1}(w,\mathcal{B}_n)+2\eta)\1_{E_{\eta}(w,n)}\e^{2n}\CR(w,\mathcal{B}_n)^2\, dw \mid \F^*_{A_{k_n}}}  & \\ \nonumber
		  + \sum_{j=k_n}^{n-1} \qb{  \1_{\Ev_n^3\cap \Ev_n^4} \int_{\mathcal{B}_j\setminus\mathcal{B}_{j+1}} h_1(-2n+4\log \CR^{-1}(w,A_n)+2\eta)\1_{E_{\eta}(w,n)}\e^{2n}\CR(w,A_n)^2 \mid \F^*_{A_{k_n}}} &
		\end{align}

To control each term on the second line of \eqref{eqn::brotherloops}, we condition further on all the \emph{brother loops} of the point $Z$ at level $(j+1)$; that is, the components of $\D\setminus A_{j+1}$ contained in $\mathcal{B}_j$ but not $\mathcal{B}_{j+1}$. Now, the description of $(A_n)_n$ under $\mathbb{Q}_{\eta,z}^*$ given by observation (4) in Section \ref{sec:rm} implies that after conditioning on $Z$ and the brother loops of $Z$ at level $(j+1)$, the process \[h_1(-2k+4\log \CR^{-1}(w,A_k))\1_{E_\eta(w,k)} \e^{2k}\CR(w,A_k)^2\] is a martingale for $k\ge j+1$. Hence on the event $\{Z=z\}$ the $j$th term of the sum in \eqref{eqn::brotherloops} is equal to the expected value under $\mathbb Q_{\eta,z}^*$,  conditionally on $\F^*_{A_{k_n}}$, of
	\[ \1_{\Ev_n^3\cap \Ev_n^4} \int_{\mathcal{B}_j\setminus\mathcal{B}_{j+1}} h_1(-2(j+1)+4\log \CR^{-1}(w,A_{j+1})+2\eta)\1_{E_{\eta}(w,j+1)}\e^{2j}\CR(w,A_{j+1})^2dw .\]
	Moreover, this conditional expectation can be bounded above by a constant times
	$$\qa{ \1_{\Ev_n^3\cap \Ev_n^4} \int_{\mathcal{B}_j\setminus \mathcal{B}_{j+1}} h_1(4\log \CR^{-1}(w,\mathcal{B}_j)+2\eta-2j)\e^{2j-2\log\CR^{-1}(w,\mathcal{B}_j)} \, dw \mid \F^*_{A_{k_n}}},$$ because $\CR(w,\D\setminus A_{j})$ is decreasing in $j$, and $h_1$ is bounded on either side by a linear function by \eqref{eqn::h_1_bounds}. 
	
	Finally, note that  on the event $\Ev_n^3\cap \Ev_n^4$,  thanks to K\"{o}ebe's theorem, $2j-2\log \CR^{-1}(w,\mathcal B_j)$ is smaller than $S_j+2c\log(j)+2 \log \CR^{-1}(z,\mathcal B_j)$ for $k_n\le j\le n$, and the area of each $B_j$ is $O(\CR(z,\mathcal{B}_j)^2)$. This means that every term in \eqref{eqn::brotherloops} is $O(\exp(-\sqrt[6]{k_n}/2)n^{4c+1})$, and since $k_n = n^{1/3}$, this therefore  implies (iii).
\end{proofof}
\medskip

We conclude by showing how to extend the proof of Proposition \ref{claim::sh} to treat $\Os$ that are not compactly supported in $\D$. For simplicity, we let $\Os=\D$.

\begin{proof}
	Fix $\eta>0$, and for $\eps>0$ set $\Os_{\eps}:=(1-\eps)\D$. We then know that \begin{equation}\label{eqn::conv_evry_eps}\frac{\sqrt{n}\tilde{K}_n^{\eta,F}(\Os_{\eps})}{\tilde{D}_n^\eta(\Os_{\eps})}\to \theta \E{F(\sqrt{2}R_1)}\end{equation} in $\mathbb{Q}_\eta$-probability as $n\to \infty$, 
	for every $\eps>0$. \footnote{Here, by $\mathbb{Q}_\eta$ we mean the measure defined by \eqref{eqn::q_eta_com} with $\Os=\D$. In fact, for fixed $\eps>0$, Proposition \ref{claim::sh} only tells us that we have the  convergence in probability \eqref{eqn::conv_evry_eps}  when $\mathbb{Q}_\eta$ is defined using $\Os=\Os_{\eps}$. However, since the two probability measures (i.e. when $\mathbb{Q}_\eta$ is defined using $\Os=\Os_{\eps}$ or $\Os=\D$) are absolutely continuous, we can deduce the stated result as well.}
		
	Write 
	\begin{equation}\label{eqn::OequalsDdecomp}
	\frac{\sqrt{n}\tilde{K}_n^{\eta,F}(\D)}{\tilde{D}_n^\eta(\D)}=\frac{\sqrt{n}\tilde{K}_n^{\eta,F}(\Os_{\eps})}{\tilde{D}_n^\eta(\Os_{\eps})}-\frac{\sqrt{n}\tilde{K}_n^{\eta,F}(\Os_{\eps})}{\tilde{D}_n^\eta(\Os_{\eps})}\frac{\tilde{D}_n^\eta(\D\setminus \Os_{\eps})}{\tilde{D}_n^\eta(\D)} + \frac{\sqrt{n}\tilde{K}_n^{\eta,F}(\D\setminus \Os_{\eps})}{\tilde{D}_n^\eta(\D)}	\end{equation} and observe that for any $\delta>0$, by Markov's inequality, 
	\[ \mathbb{Q}_\eta \left(\frac{\tilde{D}_n^\eta(\D\setminus \Os_{\eps})}{\tilde{D}_n^\eta(\D)}>\delta\right)\leq \delta^{-1}\mathbb{E}(\tilde{D}_n^\eta(\D\setminus \Os_{\eps}))=\delta^{-1}\mathbb{E}(\tilde{D}_n^\eta(\D\setminus \Os_{\eps})) \overset{\eps\to 0}{\rightarrow} 0,\]
uniformly in $n$.

	Combining this with \eqref{eqn::conv_evry_eps} and \eqref{eqn::OequalsDdecomp} means that we need only prove, for every $\delta>0$, that
	\begin{equation}\label{eqn::whole_set_unif_prob}\mathbb{Q}_\eta\left(\frac{\sqrt{n}\tilde{K}_n^{\eta,F}(\D\setminus \Os_{\eps})}{\tilde{D}_n^\eta(\D)} >\delta \right)  \overset{\eps\to 0}{\rightarrow} 0\end{equation} uniformly in $n$.
	
	For this, we again use Markov's inequality, and the same strategy that we used to prove the first moment estimate \eqref{eqn::firstmoment}. We write 
	\begin{equation}\label{eqn::whole_set_first_moment}
	\mathbb{Q}_\eta\left(\frac{\sqrt{n}\tilde{K}_n^{\eta,F}(\D\setminus \Os_{\eps})}{\tilde{D}_n^\eta(\D)}\right)=\int_{z\in \D\setminus \Os_{\eps}} \qa{\frac{\sqrt{n}F(S_n(z)/\sqrt{n})}{h_1(S_n(z)+2\eta)}}\mathbb{Q}_\eta^*(dz)
	\end{equation}
	and prove that this expectation converges to $0$ as $\eps\to 0$, uniformly in $n$.
	Note that by \eqref{eqn::persistency}, and the comments following it, the integrand on the right hand side of \eqref{eqn::whole_set_first_moment} is uniformly bounded over $z$ such that $4\log \CR^{-1}(z,\D)\leq n^{1/3}$. 
	Moreover, using that \[\mathbb{Q}_\eta^*[dz]\propto h_1(4\log\CR^{-1}(z,\D)+2\eta)\CR(z,\D)^2 dz,\] and that $F$ and $1/h_1$ are bounded from above, we see that (independently of $\eps$) the integral over the remainder of $\D\setminus \Os_{\eps}$ decays exponentially in $n$. This implies the result, since the area of $\D\setminus \Os_{\eps}$ vanishes as $\eps\to 0$.
	
\end{proof}}

 \subsection* {Acknowledgements.}  We would like to thank R. Rhodes and V. Vargas for elucidating the existing literature in the critical case and N. Berestycki for advice on connecting our measure with the existing Liouville measure in this case. We are also very grateful to W. Werner for presenting us the Neumman-Dirichlet set-up and for inviting E. Powell to visit ETH on two occasions, where a large part of this work was carried out. Finally, we thank the anonymous referee for helpful comments and suggestions. J. Aru and A. Sep\'{u}lveda are supported by the SNF grant \#155922, and are happy to be part of the NCCR Swissmap. E. Powell is supported by a Cambridge Centre for Analysis EPSRC grant EP/H023348/1.
 
\appendix

\section{Renewal functions} \label{sec::appendix}

The material in this appendix comes almost entirely from \cite[Section 2]{AiSh}. \\

	Let $(S_n)_{n\ge 1}$ be a centered random walk under some law $\mathbb{P}$, starting from $S_0=0$, and whose increments have finite variance $\sigma^2$. Then the renewal function $h_1(u)$ is the expected number of times that $(S_n)_n$ hits a strict new minimum before reaching $-u$:

\begin{equation*}
\label{eqn::renew_defn}
h_1(u):= \hat{ \mathbb P}^*\left(\left. \sum_{j=1}^{\infty} \I{\inf_{i\leq j-1} S_i > S_j\geq -u} \, \right| \, Z \right) \ge 1, \ \ u\geq 0.
\end{equation*}
	
    Note that, by the Markov property, we have 
	\begin{equation}\label{eqn::h_1_renewal}
	h_1(u)=\E{h_1(S_n+u)\I{S_n+u\ge 0}}
	\end{equation} for any $n\in \N$. 
	
	By the renewal theorem and our conditions on $(S_n)_n$, it follows that the limit
	\begin{equation}\label{eqn::renewal_thm}
	\lim_{u\to \infty} \frac{h_1(u)}{u} =: c_0 
	\end{equation}
	exists, and $c_0\in (0,\infty)$. Consequently we have 
	\begin{equation}
	\label{eqn::h_1_bounds}
	R'(1+u)\geq h_1(u)\ge R(1+u)
	\end{equation}
	for all $u\geq 0$ and some $R,R'>0$. 
	
	Finally, we also need the following asymptotic estimate from \cite{Kozlov}. For $\theta:=c_0^{-1}\sqrt{\frac{2}{\pi \sigma^2}}$, we have  
	\begin{equation}
	\label{eqn::persistency}\mathbb{P}\left( \min_{1\leq i \leq n} S_i \geq -u \right) \sim \frac{\theta h_1(u)}{\sqrt{n}}
	\end{equation} as $n\to \infty$, for any $u\geq 0$ (see \cite[Formula 12]{Kozlov}). Moreover, it can be shown (\cite{AiJaf}) that this holds uniformly for $u\in [0,b_n]$, whenever $(b_n)_{n\ge 0}$ is a sequence of positive reals such that $\lim_{n\to \infty}b_n n^{-1/2} =0.$

\section{Proof of Theorem \ref{prop::shstrong}} \label{sec::appendixB}

	In this appendix, we show in full detail how Proposition \ref{claim::sh} implies Theorem \ref{prop::shstrong}. 
	
	Fix $\delta>0$,  assume WLOG that $\|F\|_\infty=1$ and pick $u_0>0$ such that $h_1(u)/u\in [c_0-\delta, c_0+\delta]$ for all $u\ge u_0$ (which is possible by \eqref{eqn::renewal_thm}). Then on the event $\cap_z \cap_n E_{\eta-u_0}(z,n)$, we have 
	\[ \left|\frac{\tilde{D}_n^\eta(\Os)-c_0 D_n(\Os)}{D_n(\Os)}\right| \leq \delta+4c_0 \eta \left|\frac{M_n(\Os)}{D_n(\Os)}\right|. \]
	If we assume in addition that we are on the event 
	\[ \left\{\left|\frac{M_n(\Os)}{D_n(\Os)}\right|\leq \frac{\delta}{c_0\eta}\right\} \cap  \left\{\left|c_0\frac{\sqrt{n}\tilde{K}^{F,\eta}_n(\Os)}{\tilde{D}^\eta_n(\Os)}-\frac{1}{\sqrt{\pi}} \E{F(\sqrt{2}R_1)}\right|\leq \delta\right\}=: A_{n,\eta}\cap A'_{n,\eta} \]
	then (as long as $\delta$ is small enough) we have 
	\begin{align*} &\left|\frac{\sqrt{n}K_n^F(\Os)}{D_n^F(\Os)}-\frac{1}{\sqrt{\pi}} \E{F(\sqrt{2}R_1)}\right|  \\   &\hspace{0.1\textwidth}\leq  \left|\frac{\sqrt{n}\tilde{K}^{F,\eta}_n(\Os)}{\tilde{D}^\eta_n(\Os)} \right|\left| \frac{\tilde{D}_n^\eta(\Os)-c_0 D_n(\Os)}{D_n(\Os)}\right| +  \left|c_0\frac{\sqrt{n}\tilde{K}^{F,\eta}_n(\Os)}{\tilde{D}^\eta_n(\Os)}-\frac{1}{\sqrt{\pi}} \E{F(\sqrt{2}R_1)} \right|,\end{align*}
	which by definition is smaller than $3\delta$. Therefore, we can bound
	\[ \mathbb{P}\left(\left|\sqrt{n}\frac{K_n^F(\Os)}{D_n^F(\Os)}- \frac{1}{\sqrt \pi} \E{F(\sqrt{2}R_1)}\right|>3\delta \right) \le \mathbb{P}(\cup_z \cup_n E_{\eta-u_0}(z,n)) + \mathbb{P}((A_{n,\eta})^c)+ \mathbb{P}((A'_{n,\eta})^c),\] which, by the definition of $\mathbb{Q}^\eta$, is for any $K>0$ less than or equal to 
	\[\mathbb{P}(\cup_z \cup_n E_{\eta-u_0}(z,n)) + \mathbb{P}((A_{n,\eta})^c)+\mathbb{P}\left (\tilde{D}_n^\eta(\Os) \leq \frac{1}{K}\right )+\frac{K}{\tilde{D}_0^\eta(\Os)}\mathbb{Q}^\eta((A'_{n,\eta})^c).\] 
	Now take $\eps>0$. By \eqref{eqn::unifmin}, Proposition \ref{propn::derivative} and Proposition \ref{lem::criticaltozero}, we can pick $u_0,\eta, K$ and $N_0 \in \N$ such that the first three terms above are each less than $\eps/4$ for all $n\ge N_0$. Then, using Proposition \ref{claim::sh}, we can choose a further $N_0'\geq N_0$ such that the final term is also less than $\eps/4$ for $n\geq N_0'$. Since $\eps$ and $\delta$ were arbitrary, this concludes the proof of the theorem.

\bibliographystyle{alpha}
\bibliography{bibliography_gmc_cascades}

\end{document}